\theoremstyle{plain}
\numberwithin{equation}{section}
\newtheorem{thm}{Theorem}[section]
\newtheorem{theorem}[thm]{Theorem}
\newtheorem*{theorem*}{Theorem}
\newtheorem{lemma}[thm]{Lemma}
\newtheorem{corollary}[thm]{Corollary}
\newtheorem{definition}[thm]{Definition}
\newtheorem{proposition}[thm]{Proposition}
\newtheorem{conjecture}[thm]{Conjecture}
\newtheorem{remark}[thm]{Remark}
\DeclareMathOperator{\GL}{GL}
\DeclareMathOperator{\Su}{Supp}
\DeclareMathOperator{\Hom}{Hom}
\DeclareMathOperator{\N}{N}
\DeclareMathOperator{\Mat}{Mat}
\DeclareMathOperator{\Ind}{Ind}
\title{O\lowercase{n} \lowercase{the} L\lowercase{ocal} C\lowercase{onverse} T\lowercase{heorem} \lowercase{for} D\lowercase{epth} $\frac{1}{N}$ S\lowercase{upercuspidal} R\lowercase{epresentations of} $\GL(2N, F)$}
\author{D\lowercase{avid} C. L\lowercase{uo} \lowercase{and}  S\lowercase{haun} S\lowercase{tevens}}
\date{}
\address{School of Mathematics, University of Minnesota, Minneapolis, MN 55455, United States} \email{luo00275@umn.edu}
\address{School of Mathematics, University of East Anglia, Norwich Research Park, Norwich NR4 7TJ, United Kingdom} \email{Shaun.Stevens@uea.ac.uk}
\subjclass{22E50, 11F70}
\keywords{Local Converse Theorem, supercuspidal representation, twisted gamma factor.}
\begin{document}

\begin{abstract}
    In this paper, we use type theory to construct a family of depth $\frac{1}{N}$ minimax supercuspidal representations of $\GL(2N, F)$ which we call \textit{middle supercuspidal representations}. These supercuspidals may be viewed as a natural generalization of simple supercuspidal representations, i.e. those supercuspidals of minimal positive depth. Via explicit computations of twisted gamma factors, we show that middle supercuspidal representations may be uniquely determined through twisting by quasi-characters of $F^{\times}$ and simple supercuspidal representations of $\GL(N, F)$. 
\end{abstract}

\maketitle

\section{Introduction}

Let $\pi$ and $\tau$ be irreducible generic representations of the general linear groups $\GL(n, F)$ and $\GL(r, F)$ respectively with $F$ a non-archimedean local field. To $\pi$ and $\tau$, one may attach an invariant called the $\textit{twisted gamma factor}$  $\gamma(s, \pi \times \tau, \psi_{F})$ which is a function of a complex variable $s$ and may be defined using the Rankin--Selberg convolution integrals \cite{JPSS} or the Langlands--Shahidi method \cite{Shahidi}.

Fixing $\pi$ and letting $\tau$ range throughout all irreducible generic representations of $\GL(r, F)$ with $r \geq 1$, one may completely determine $\pi$ via its twisted gamma factors \cite{Chai}, \cite{Liu}; indeed, there is an integer~$M_\pi$ such that it is enough to consider~$1\le r\le M_\pi$, and we call the least such~$M_\pi$ the number of "twists" required to characterize~$\pi$. The question of finding the minimal number of twists needed has a long and rich history (for more information on this, see \cite{Adrian1}). It was conjectured by Herv\'{e} Jacquet that $M_\pi \le \left\lfloor \frac{n}{2} \right\rfloor$; this conjecture is commonly referred to as Jacquet's conjecture or the local converse problem for $\GL(n, F)$ \cite{Jiang}. In 2018, Jacquet and Liu \cite{Liu} and, independently Chai using a different method \cite{Chai}, proved Jacquet's conjecture: 
%which is now referred to as the local converse theorem for $\GL(n, F)$:
\begin{theorem*}[Local converse theorem]
    Let $\pi_{1}$ and $\pi_{2}$ be irreducible supercuspidal representations of $\GL(n, F)$. % with the same central character. 
     If
    \[
        \gamma(s, \pi_{1} \times \tau, \psi_{F}) = \gamma(s, \pi_{2} \times \tau, \psi_{F})
    \]
    as functions of the complex variable $s$, for all irreducible supercuspidal representations $\tau$ of $\GL(r, F)$ with $1 \leq r \leq \left\lfloor \frac{n}{2} \right\rfloor$, then $\pi_{1} \cong \pi_{2}$.
\end{theorem*}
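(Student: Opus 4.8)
The plan is to pass to Whittaker models and exploit the Rankin--Selberg functional equation, along the lines of the argument of Jacquet and Liu. Every supercuspidal representation is generic, so fix a nondegenerate character $\psi$ of the maximal unipotent subgroup $N_n$ of $\GL(n,F)$ and realise each $\pi_i$ in its Whittaker model $\mathcal{W}(\pi_i,\psi)$. Since $r\le\lfloor n/2\rfloor<n$ and $\pi_i$ is supercuspidal, the Rankin--Selberg $L$-factors $L(s,\pi_i\times\tau)$ and $L(1-s,\widetilde\pi_i\times\widetilde\tau)$ are both trivial, so the local functional equation reduces to $\widetilde{\Psi}(1-s,W,W')=\gamma(s,\pi_i\times\tau,\psi)\,\Psi(s,W,W')$, in which both zeta integrals are Laurent polynomials in $q^{-s}$. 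Hence the hypothesis on gamma factors becomes, after the usual matching $W_1\leftrightarrow W_2$ of Whittaker functions, an identity of the twisted bilinear forms attached to $\pi_1$ and to $\pi_2$, valid for every $W'\in\mathcal{W}(\tau,\psi^{-1})$ and every $r\le\lfloor n/2\rfloor$. The goal is to upgrade these integral identities to the equality $\mathcal{W}(\pi_1,\psi)=\mathcal{W}(\pi_2,\psi)$ of subspaces of functions on $\GL(n,F)$, which by uniqueness of the Whittaker model gives $\pi_1\cong\pi_2$.

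To extract pointwise information from the integral identities I would use \emph{Howe vectors}: for each positive integer $m$ there is a distinguished vector in $\pi_i$ whose Whittaker function $W_m$ is, up to normalisation, supported on $N_n K_m$ for a suitable congruence subgroup $K_m$, transforming by $\psi$ on the left and trivially under $K_m$. Substituting $W_m$ into $\Psi(s,W_m,W')$ and letting $m\to\infty$, the integral \emph{stabilises}, and its limiting value is a \emph{partial Bessel integral} assembled from the Bessel function $\mathcal{B}_{\pi_i,\psi}$ of $\pi_i$. Carrying this out for all $r\le\lfloor n/2\rfloor$ and all $W'$ converts the gamma-factor hypothesis into the assertion that the partial Bessel functions of $\pi_1$ and of $\pi_2$ agree on the Bruhat cells $N_n w N_n$ for a controlled family of Weyl elements~$w$.

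The heart of the argument, and the step I expect to be the genuine obstacle, is the classification of which Weyl elements $w$ actually contribute and the induction that propagates the Bessel-function equality from one cell to the next. Using the Bruhat decomposition together with the support constraints forced by $\psi$-genericity, one shows that only certain \say{relevant} $w$ survive, and that these can be ordered by a partial order in which passing from one cell to the next costs only a $\GL(r)$-twist with $r\le\lfloor n/2\rfloor$; the \say{excess} contributions produced at each stage are annihilated by a careful choice of $\tau$ and $W'$ and an inductive use of the functional equation for smaller groups. This is the $\GL(n)$ analogue of the partial-Bessel-function techniques of Cogdell, Shahidi and Tsai, and it is here that Jacquet's sharp bound $\lfloor n/2\rfloor$ --- rather than the naive $n-1$ --- emerges: the last cell one is forced to reach is the one attached to the element interchanging the two blocks of the partition $(\lceil n/2\rceil,\lfloor n/2\rfloor)$, and reaching it only ever calls for $\GL(r)$-twists with $r\le\lfloor n/2\rfloor$. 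Once $\mathcal{B}_{\pi_1,\psi}$ and $\mathcal{B}_{\pi_2,\psi}$ coincide on an open dense subset of $\GL(n,F)$, the two Whittaker models coincide, and therefore $\pi_1\cong\pi_2$.
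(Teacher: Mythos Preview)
The paper does not prove this theorem. It is stated in the introduction as a background result, with the proof attributed to Jacquet--Liu \cite{Liu} and, independently by a different method, to Chai \cite{Chai}; the paper's own contributions (Theorem~\ref{TheoremMain}, Proposition~\ref{PropositionCentral}, Conjecture~\ref{Conjecture}) concern refinements of this theorem for specific families of supercuspidals, not a reproof of the general statement. So there is nothing in the paper to compare your proposal against.

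That said, your outline is a faithful high-level sketch of the Jacquet--Liu approach: pass to Whittaker models, use Howe vectors to stabilise the zeta integrals, reduce to an equality of partial Bessel functions on Bruhat cells, and induct over a partial order on relevant Weyl elements so that each step costs only a $\GL(r)$-twist with $r\le\lfloor n/2\rfloor$. You correctly flag the inductive cell-by-cell propagation as the hard step, and you are right that this is where the bound $\lfloor n/2\rfloor$ appears. As a roadmap this is accurate; as a proof it is of course incomplete, since the combinatorics of which cells are relevant and the precise mechanism by which ``excess'' terms are killed at each stage is the entire content of \cite{Liu} (and the alternative argument of \cite{Chai} proceeds instead via a direct analysis of Bessel functions and Bessel distributions, avoiding the Howe-vector machinery). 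If you were asked to supply a proof here, the honest answer is simply to cite \cite{Liu} and \cite{Chai}, as the paper does.
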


Following the proof of Jacquet's conjecture, the sharpness of the bound $\left\lfloor \frac{n}{2} \right\rfloor$ has been a topic of much curiosity. While the bound $\left\lfloor \frac{n}{2} \right\rfloor$ is essentially sharp in general \cite{A23}, \cite{Adrian1}, there do exist cases in which it is not. In the situation of depth zero supercuspidal representations~$\pi$, Nien, Zhang, and Yun give precise criteria for when single twists are sufficient \cite{NZ} (i.e.~$M_\pi=1$). Another class of supercuspidal representations that have been proven to satisfy this single twisting condition are \textit{simple (epipelagic) supercuspidal representations}, those supercuspials of minimal positive depth (\cite[Remark 3.18]{Adrian-Liu}, \cite[Proposition 2.2]{BH-1}, and \cite[Theorem 1.1]{Xu}).

In this paper, we consider a class of representations of the next smallest possible positive depth: depth $\frac{1}{N}$ supercuspidal representations of $\GL(2N, F)$, which we call \textit{middle supercuspidal representations}. More precisely, these are the supercuspidal representations of depth $\frac{1}{N}$ such that the field extension~$E/F$ associated to the type from which they are induced has ramification index~$N$ and residue degree~$2$. (More precisely, we should say ``a'' field since, when~$p\mid N$, it may not be unique; however, the ramification index and residue degree are well-defined.) Alternatively, they are the depth $\frac{1}{N}$ supercuspidal representations which are \textit{minimax} in the sense of \cite{Adrian2}; or, in terms of the Galois side of the local Langlands correspondence, they are the depth $\frac{1}{N}$ supercuspidal representations whose Galois parameter has multiplicity-free restriction to wild inertia and twisting number two (i.e. there are precisely two unramified characters of~$F^\times$ which leave the parameter invariant by twisting).

%In this paper, we introduce a class of depth $\frac{1}{N}$ supercuspidal representations of $\GL(2N, F)$ which we call \textit{middle supercuspidal representations} that are \textit{minimax} in the sense of \cite{Adrian2}. The hereditary order from which middle supercuspidals are induced is a conjugate of the standard hereditary $\mathcal{O}_{F}$-order corresponding to the partition $2+ \cdots +2$ of $2N$. We may think of this class of supercuspidals as a natural generalization of simple supercuspidals which are induced from the standard minimal hereditary $\mathcal{O}_{F}$-order. Furthermore, we remark that middle supercuspidals are the class of minimax supercuspidal representations of next smallest positive depth after simple supercuspials. 

We parametrize middle supercuspidal representations by triples $\left(\bar{f},  \, \chi,  \, \zeta \right)$ where $\bar{f}$ is a monic irreducible degree two polynomial over the residue field $k_{F}$ of $F$, $\chi$ a multiplicative character of the unique degree two extension $k_{\bar{f}}$ of $k_{F}$, and $\zeta \in \mathbb{C}^{\times}$. The first major result we obtain is the following:
\begin{thm}\label{TheoremMain}
    Let $\pi=\pi_{(\bar{f},  \, \chi,  \, \zeta)}$ be a middle supercuspidal representation of $\GL(2N, F)$ and $\pi'$ be an irreducible supercuspidal representation of $\GL(2N, F)$ such that their central characters are equal. If
    \[
        \gamma(s, \pi \times \tau, \psi_{F}) = \gamma(s, \pi' \times \tau, \psi_{F})
    \]
    as functions of the complex variable $s$ for $\tau$ any tamely ramified quasi-character of $F^{\times}$ or any simple supercuspidal representation of $\GL(N, F)$, then $\pi \cong \pi'$.
\end{thm}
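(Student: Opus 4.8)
The plan is to exploit the two families of twists to strip off, one invariant at a time, the data $(\bar f,\chi,\zeta)$ attached to $\pi$. A preliminary reduction applies uniformly: since $\pi$ and $\pi'$ are supercuspidal representations of $\GL(2N,F)$ with $2N\ge 2$, every Rankin--Selberg $L$-factor that occurs below is trivial, $L(s,\pi\times\tau)=L(s,\pi'\times\tau)=1$ for $\tau$ a quasi-character of $F^{\times}$ or an irreducible supercuspidal representation of $\GL(r,F)$ with $r<2N$, and similarly for the contragredients. Hence each gamma factor is a local constant times a single power of $q^{-s}$ (with $q=|k_{F}|$),
\[
\gamma(s,\pi\times\tau,\psi_{F})=\varepsilon\bigl(\tfrac12,\pi\times\tau,\psi_{F}\bigr)\,q^{-a(\pi\times\tau)\bigl(s-\frac12\bigr)},
\]
so the hypothesis is equivalent to the two collections of identities $a(\pi\times\tau)=a(\pi'\times\tau)$ and $\varepsilon(\tfrac12,\pi\times\tau,\psi_{F})=\varepsilon(\tfrac12,\pi'\times\tau,\psi_{F})$, with $\tau$ ranging over tamely ramified quasi-characters of $F^{\times}$ and over simple supercuspidal representations of $\GL(N,F)$.

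First I would show that $\pi'$ is itself a middle supercuspidal representation. Taking $\tau$ unramified fixes $a(\pi')=a(\pi)=2N+2$, hence the depth $\tfrac1N$; the conductor identities $a(\pi\times\tau)=a(\pi'\times\tau)$ for $\tau$ a simple supercuspidal of $\GL(N,F)$ then record how far $\pi'$ sits from such $\tau$, and combined with the central-character hypothesis and the classification of depth-$\tfrac1N$ supercuspidals of $\GL(2N,F)$ this forces the simple type of $\pi'$ to be built from a field extension of ramification index $N$ and residue degree $2$; that is, $\pi'=\pi_{(\bar g,\,\chi',\,\zeta')}$ for some triple. (What has to be excluded here are the other depth-$\tfrac1N$ supercuspidals, those attached to a totally ramified degree-$N$ extension together with a cuspidal representation of $\GL(2,k_{F})$.) One already gleans partial information out of the tame twists: for $\tau$ a tamely ramified character, $\varepsilon(\tfrac12,\pi\otimes\tau,\psi_{F})$ is, up to a constant depending only on $\psi_{F}$, $N$ and $\bar f$, a Gauss sum over $k_{\bar f}$ attached to $\chi\cdot(\tau\circ\N_{k_{\bar f}/k_{F}})$, so matching these as $\tau$ varies recovers $\bar g=\bar f$ and pins $\chi'$ down to $\chi$ modulo $\operatorname{Gal}(k_{\bar f}/k_{F})$ and an unramified ambiguity.

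The crux of the matter is an explicit evaluation of $\varepsilon(\tfrac12,\pi_{(\bar f,\chi,\zeta)}\times\tau,\psi_{F})$ for $\tau$ the simple supercuspidal representation of $\GL(N,F)$ determined by an affine-generic datum $b\in k_{F}^{\times}$ and a central value $c\in\mathbb C^{\times}$. I would realise $\pi$ and $\tau$ from their types, write down the corresponding Whittaker functions, and compute the $\GL(2N)\times\GL(N)$ Rankin--Selberg integral (or its analogue with the contragredients). The decisive structural fact is that $\pi$ has depth $\tfrac1N$ and a multiplicity-one simple type, so the part of its Whittaker function entering the integral is supported on, and explicitly given on, a small union of cosets; pairing it against the equally explicit Whittaker (partial Bessel) function of the simple supercuspidal $\tau$ collapses the integral to a finite exponential sum. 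I expect an identity of the shape
\[
\varepsilon\bigl(\tfrac12,\pi_{(\bar f,\chi,\zeta)}\times\tau_{(b,c)},\psi_{F}\bigr)=C(\bar f,N,\psi_{F})\,(\zeta c)^{m}\,\mathfrak g(\chi;\bar f,b),
\]
with $m$ depending only on $N$ and $a(\pi\times\tau)$, and $\mathfrak g$ a Gauss-sum-type quantity in which $b$ enters precisely as a multiplicative twist of $\chi$. (An alternative route is to go through the explicit local Langlands correspondence of Bushnell--Henniart, writing $\pi$ and $\tau$ as characters induced from $W_{E}$ and $W_{E'}$ with $E/F$ of type $(N,2)$ and $E'/F$ totally ramified of degree $N$, and reducing $\varepsilon$ of the tensor product to the same residue-field Gauss sums by inductivity of local constants and Mackey theory.) Obtaining this formula with the normalisation under control --- so that $b$ records an honest twist of $\chi$ rather than being entangled with $\bar f$ and the various root-of-unity factors --- is where I expect the real difficulty to lie.

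Finally I would conclude from the formula. Varying the unramified parameter $c$ (equivalently, twisting $\tau$ by unramified characters) shows that the $c$-dependence is the same monomial on both sides, and this together with $\omega_{\pi}=\omega_{\pi'}$ --- a hypothesis that is genuinely needed, since the simple-supercuspidal gamma factors see $\omega_{\pi}(\varpi_{F})$ only up to a root of unity --- forces $\zeta'=\zeta$. What then remains is $\mathfrak g(\chi;\bar f,b)=\mathfrak g(\chi';\bar g,b)$ for all $b\in k_{F}^{\times}$; since the Gauss sums of a character of $k_{\bar f}^{\times}$ twisted by all characters inflated from $k_{F}^{\times}$ determine that character up to $\operatorname{Gal}(k_{\bar f}/k_{F})$, this yields $\bar g=\bar f$ and $\chi'=\chi$ up to Frobenius, a conjugation that does not change the isomorphism class. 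Hence $(\bar g,\chi',\zeta')=(\bar f,\chi,\zeta)$ and $\pi'\cong\pi$.
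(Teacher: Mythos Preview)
Your three-step outline---match the depth via the standard conductor, exclude the non-middle depth-$\tfrac1N$ supercuspidals via conductors under simple-supercuspidal twist, then recover $(\bar f,\chi,\zeta)$ from explicit gamma-factor formulas---is exactly the paper's, but several of your expected mechanisms are off, and one of them is a genuine gap.

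For the exclusion step the paper's device is the \emph{minimal polynomial} $\phi_{\pi}\in k_F[X]$ of the underlying simple stratum, an invariant of $\pi$ from Bushnell--Henniart--Kutzko. A middle supercuspidal has $\phi_{\pi}=\bar f$ irreducible quadratic, whereas any other depth-$\tfrac1N$ supercuspidal $\rho$ of $\GL(2N,F)$ (built from a totally ramified degree-$N$ extension, possibly with a non-minimal stratum) has linear $\phi_{\rho}=X-v$. The simple supercuspidal $\tau$ of $\GL(N,F)$ with $\phi_{\tau}=X-v$ is then \emph{not completely distinct} from $\rho$ but \emph{is} completely distinct from every middle supercuspidal, and the BHK conductor formula gives the strict inequality $f(\rho\times\tau^{\vee})<2N^2(1+\tfrac1N)=f(\pi\times\tau^{\vee})$. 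Your phrase ``how far $\pi'$ sits from $\tau$'' points in the right direction, but this is the concrete mechanism; the central-character hypothesis is not used here.

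The real gap is your claim that for tamely ramified $\lambda$ the factor $\varepsilon(\tfrac12,\pi\otimes\lambda,\psi_F)$ is a Gauss sum over $k_{\bar f}$ involving $\chi$. The paper's Whittaker computation gives the much simpler
\[
\gamma\bigl(s,\pi_{(f,\chi,\zeta)}\times\lambda,\psi_F\bigr)=\zeta^{-1}\,\lambda\bigl(-c^{-1}\varpi_F^{2}\bigr)\,q_F^{1-2s},
\]
which depends on $\zeta$ and on the constant term $c$ of $f$ but \emph{not} on $\chi$ or on $d$. Tame twists therefore yield $\zeta$ (take $\lambda$ trivial) and $c\bmod\mathcal P_F$, and nothing more; your plan to extract information about $\chi$ from them cannot work. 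For the simple-supercuspidal twist the paper again gets a character value rather than a Gauss sum,
\[
\gamma\bigl(s,\pi\times\pi_{(u,\phi,\zeta')},\psi_F\bigr)=\zeta^{-N}\,\chi\bigl(acu+a\sigma_f\bigr)\,\omega_{(u,\phi,\zeta')}\bigl(au\varpi_F^{2}\bigr)\,P\bigl(q_F^{-s}\bigr),\qquad a=\frac{u}{cu^{2}+du-1},
\]
with $P$ independent of $\omega_{(u,\phi,\zeta')}$, $\chi$, $\zeta$. Letting $\omega_{(u,\phi,\zeta')}|_{\mathcal O_F^{\times}}$ run over all characters of $k_F^{\times}$ forces $d_1\equiv d_2$ (hence $\bar f_1=\bar f_2$), whereupon $\chi_1(1+(cu)^{-1}\sigma_f)=\chi_2(1+(cu)^{-1}\sigma_f)$ for all $u\in\mathcal O_F^{\times}$ gives $\chi_1=\chi_2$ exactly, since such elements generate $\mathcal O_{L_f}^{\times}/(1+\mathcal P_{L_f})$. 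No Frobenius ambiguity arises: the paper's parametrization is by characters of the concrete field $k_{\bar f}=k_F[X]/(\bar f)$, and $\chi\ne\chi\circ\mathrm{Frob}$ give non-isomorphic representations.
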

\noindent In Theorem \ref{TheoremMain}, we assume that $\pi$ and $\pi'$ share the same central character. If we do not assume this condition, we see from Corollary \ref{CorollaryCentral} below that we only need to add that the gamma factors of $\pi$ and $\pi'$ to agree when twisted by elements of a certain finite set $\Xi_{\text{middle}}$ of positive depth characters of~$F^\times$ that we construct in (\ref{Quasi-Characters}).

Indeed, this is a special case of a more general result. For arbitrary depth $d$, we may define an analogous finite set $\Xi_{d}$ and obtain the following proposition, which may be viewed as an improvement of \cite[Corollary 2.7]{Jiang}:
\begin{proposition}\label{PropositionCentral}
    Let $\pi_{1}$ and $\pi_{2}$ be irreducible supercuspidal representations of $\GL(n, F)$ with central characters $\omega_{1}$ and $\omega_{2}$ respectively, and set~$d=d\left(\pi_{1}\right)$. If 
    \[
    \gamma\left(s, \pi_{1} \times \chi, \psi_{F}\right) = \gamma\left(s, \pi_{2} \times \chi, \psi_{F}\right) 
    \]
    for all $\chi \in \Xi_{d}$, then $\omega_{1} = \omega_{2}$.
\end{proposition}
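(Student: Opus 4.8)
The plan is to recover the central character~$\omega_i$ of~$\pi_i$ from a small collection of twisted gamma factors, using the standard relation between the gamma factor of $\pi_i\times\chi$ and the gamma factor of~$\pi_i$ together with the local constant of~$\chi$, evaluated via functional-equation/stability properties. First I would recall that for a $1$-dimensional twist the Rankin--Selberg gamma factor satisfies
\[
\gamma(s,\pi_i\times\chi,\psi_F)=\omega_i(c)\,\chi(\det g)\text{-type factor}\cdots
\]
more usefully, that $\gamma(s,\pi_i\times\chi,\psi_F)$ depends on~$\pi_i$ only through its central character once~$\chi$ is sufficiently highly ramified relative to the conductor of~$\pi_i$ --- this is the stability of gamma factors (Jacquet--Shalika, and in the form used in \cite{Jiang}). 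Concretely, for~$\chi$ of large enough conductor one has an explicit formula for $\gamma(s,\pi_i\times\chi,\psi_F)$ as a product of $n$~copies of an abelian (Tate) gamma factor $\gamma(s,\chi\cdot(\text{unram twist}),\psi_F)$ times a monomial in~$\omega_i$ evaluated at a fixed element determined by~$\psi_F$ and the conductor of~$\chi$.

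The key steps, in order, are: (1) fix the depth~$d=d(\pi_1)$ and note that the conductor of each~$\pi_i$ is controlled by~$d$ and~$n$, so "sufficiently ramified relative to~$\pi_i$" can be made uniform --- this is exactly what pins down which characters must go into the finite set~$\Xi_d$; (2) for~$\chi\in\Xi_d$ chosen with conductor just past this threshold, invoke stability to write $\gamma(s,\pi_i\times\chi,\psi_F)=c(s,\chi,\psi_F)\cdot\omega_i(\gamma_\chi)$, where $c(s,\chi,\psi_F)$ is independent of~$i$ and $\gamma_\chi\in F^\times$ is an explicit element (a power of a uniformizer times a unit cut out by the conductor of~$\chi$); (3) the hypothesis $\gamma(s,\pi_1\times\chi,\psi_F)=\gamma(s,\pi_2\times\chi,\psi_F)$ then forces $\omega_1(\gamma_\chi)=\omega_2(\gamma_\chi)$ for every~$\chi\in\Xi_d$; (4) finally, observe that the elements~$\gamma_\chi$, as~$\chi$ ranges over~$\Xi_d$, generate~$F^\times$ modulo the subgroup on which both~$\omega_1$ and~$\omega_2$ are automatically trivial (namely a high enough unit filtration subgroup, again controlled by~$d$), together with enough information at the level of a uniformizer; hence $\omega_1=\omega_2$. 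Arranging~$\Xi_d$ to be genuinely finite is where one uses that the depths of the~$\pi_i$ are bounded by~$d$: the unit group~$\mathcal{O}_F^\times$ modulo the relevant filtration subgroup is finite, so finitely many~$\chi$ suffice, and one extra~$\chi$ (or a comparison at an unramified twist) handles the value on a uniformizer.

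The main obstacle I expect is step (2)/(4): making the stability formula fully explicit --- i.e.\ identifying the element~$\gamma_\chi$ and the prefactor~$c(s,\chi,\psi_F)$ precisely enough to conclude that the collection $\{\gamma_\chi:\chi\in\Xi_d\}$ really does determine a character of~$F^\times$ that is trivial on the appropriate filtration subgroup --- and in doing so keeping~$\Xi_d$ as small as the paper claims. The subtlety is that stability as usually stated gives equality of gamma factors for two representations with the \emph{same} central character after a highly ramified twist, whereas here we are running the implication backwards: we must ensure the dependence on~$\omega_i$ is not just "through~$\omega_i$" but through~$\omega_i$ evaluated at \emph{known} points, with enough such points to separate central characters of bounded depth. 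This is precisely the refinement over \cite[Corollary 2.7]{Jiang}, where presumably an infinite (or unspecified) family of twists was used; the work is to trim it to the finite set~$\Xi_d$ by a counting argument on $\mathcal{O}_F^\times/(1+\mathfrak{p}_F^m)$ for the explicit~$m=m(d,n)$.
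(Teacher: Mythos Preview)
Your proposal is correct and matches the paper's approach: the explicit stability formula you want is exactly Proposition~\ref{Jiang}, which gives $\gamma(s,\pi\times\chi,\psi_F)=\omega_\pi(c)^{-1}\gamma(s,\chi,\psi_F)^n$ for $\chi$ of conductor $m>2d(\pi)$ with $\chi(1+x)=\psi_F(cx)$ on $\mathcal{P}_F^{\lfloor m/2\rfloor+1}$, so your element $\gamma_\chi$ is simply $c$, and $\Xi_d$ is built so that these $c$'s are $r_i\varpi_F^{-\lceil 2d+1\rceil}$ (with $\{r_i\}$ generating $\mathcal{O}_F^\times/U^{\lceil d\rceil}(\mathcal{O}_F)$) together with $\varpi_F^{-\lceil 2d+2\rceil}$ --- the two consecutive uniformizer exponents pin down $\omega_i(\varpi_F)$ and the $r_i$ then handle the units, which dissolves your step~(2)/(4) obstacle. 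The one point you gloss over is why the stability threshold for $\pi_2$ is also governed by $d=d(\pi_1)$: the paper includes the trivial character $\mathbbm{1}_F$ in $\Xi_d$, so the standard gamma factors agree, and then Proposition~\ref{Xu} forces $d(\pi_2)=d(\pi_1)$.
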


We remark that in Theorem \ref{TheoremMain}, our choice to twist by simple supercuspidal representations of $\GL(N, F)$ is motivated by \cite{Deligne} which implies the following: let $\pi_{1}$ and $\pi_{2}$ be irreducible supercuspidal representations of $\GL(n, F)$ such that their gamma factors agree for all twists by irreducible supercuspidal representations $\tau$ of $\GL(r, F)$, $1 \leq r \leq \left\lfloor \frac{n}{2} \right\rfloor$. If $d(\tau) > 2 d(\pi_{i})$ (we have $d(\pi_{1}) = d(\pi_{2})$ from Proposition \ref{Xu} as their standard gamma factors agree), then equality of the twisted gamma factors $\gamma(s, \pi_{i} \times \tau, \psi_{F})$ only gives relations between the central characters of the $\pi_{i}$. 

Theorem \ref{TheoremMain} improves the local converse theorem for middle supercuspidal representations by reducing the number of classes of supercuspidal representations $\tau$ of $\GL(r, F)$, $1 \leq r \leq N$ from which to twist by significantly. Furthermore, our theorem gives insight into a potential refinement of the local converse theorem for general supercuspidal representations which we state as the following conjecture:
\begin{conjecture}\label{Conjecture}
            Let $\pi_{1}$ and $\pi_{2}$ be two irreducible supercuspidal representations of $\GL(n, F)$ with the same central character. If 
            \[
            \gamma(s, \pi_{1} \times \tau, \psi_{F}) = \gamma(s, \pi_{2} \times \tau, \psi_{F})
            \]
            for all irreducible supercuspidal representations $\tau$ of $\GL(r, F)$ with $1 \leq r \leq \left\lfloor\frac{n}{2} \right\rfloor$ and $d(\tau) \leq d(\pi_{i})$, then $\pi_{1} \cong \pi_{2}$.
        \end{conjecture}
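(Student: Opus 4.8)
By Proposition~\ref{Xu} the hypotheses already force $d(\pi_1)=d(\pi_2)=:d$ (the trivial character is a permitted twist, so the standard gamma factors agree). The plan is to reduce the conjecture, via the Local Converse Theorem of Jacquet--Liu \cite{Liu} and Chai \cite{Chai}, to a depth-sensitive refinement of $\epsilon$-factor stability. Indeed, by that theorem it suffices to show
\[
\gamma(s,\pi_1\times\tau,\psi_F)=\gamma(s,\pi_2\times\tau,\psi_F)
\]
for \emph{every} irreducible supercuspidal $\tau$ of $\GL(r,F)$ with $1\le r\le\lfloor n/2\rfloor$, in particular for those with $d(\tau)>d$; so the entire content of the conjecture is the assertion that these ``deep'' twists are redundant once the ``shallow'' twists (those of depth $\le d$) and the central character are known.

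I would split the deep twists into two regimes. For $d(\tau)>2d$, the argument in the remark following Theorem~\ref{TheoremMain} applies: passing to the Galois side of the local Langlands correspondence, Deligne's comparison of $\epsilon$-factors \cite{Deligne} expresses $\epsilon(s,\rho_{\pi_i}\otimes\rho_\tau,\psi_F)$ through $\det\rho_{\pi_i}$ (equivalently $\omega_{\pi_i}$), $\rho_\tau$ and $\psi_F$ alone; since $\omega_{\pi_1}=\omega_{\pi_2}$ by hypothesis, the corresponding gamma factors agree automatically. The genuinely new work is therefore confined to the intermediate band $d<d(\tau)\le 2d$, where the gamma factor does depend on finer invariants of $\rho_{\pi_i}$ than its determinant --- concretely, it still ``sees'' the wild part of $\rho_{\pi_i}$, equivalently the deepest layer of a Bushnell--Kutzko simple type for $\pi_i$ (the simple stratum $[\mathfrak A_i,\beta_i]$ and the attached simple character $\theta_i$).

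For this intermediate band I would argue with types, following the template of the explicit computations in this paper and of the simple-supercuspidal case. Two things must be shown: first, that the shallow twists (characters of $F^\times$ of depth $\le d$ and supercuspidals of $\GL(r,F)$ of small depth) already determine the deep data of $\pi_i$ --- this is the analogue, for general depth, of the fact that in the epipelagic case a single shallow twist recovers everything deep, and of the computations underlying Theorem~\ref{TheoremMain}; and second, that once this deep data and the central character are fixed, the value of $\gamma(s,\pi_i\times\tau,\psi_F)$ for every $\tau$ with $d(\tau)\le 2d$ is forced. The second point is an ``explicit local Langlands on the wild part'' statement, to be attacked by a Bushnell--Henniart style analysis of how the type of $\pi_i$ and that of $\tau$ interact inside the Rankin--Selberg or Langlands--Shahidi integral.

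The main obstacle is precisely this intermediate-depth step. There is at present no uniform formula for twisted gamma factors of a general supercuspidal in terms of its type data: the available results --- depth zero \cite{NZ}, simple supercuspidals, and the middle supercuspidals of Theorem~\ref{TheoremMain} --- each exploit the special shape of the type in question, and a proof in general would require either (i) a strengthening of Deligne's stability lemma that isolates the wild contribution to $\epsilon(\rho_{\pi_i}\otimes\rho_\tau,\psi_F)$ in the band $d<d(\tau)\le2d$ and shows it is already determined by the shallow twists together with $\det\rho_{\pi_i}$, or (ii) the corresponding type-theoretic computation carried out at the level of Bushnell--Kutzko simple types in full generality. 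I expect a mixed strategy to be the most realistic, with Theorem~\ref{TheoremMain} serving as the prototype for the $\GL(2N,F)$, ramification-$(N,2)$ instance.
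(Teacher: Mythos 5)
This statement is presented in the paper as an open \emph{conjecture}, not a theorem; the paper offers no proof of it, and only records that it holds in the depth-zero, simple, and middle-supercuspidal cases, and that \cite{Deligne} yields the weaker version with the bound $d(\tau)\le 2d(\pi_i)$ in place of $d(\tau)\le d(\pi_i)$. There is therefore no argument of the authors to compare your proposal against.

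That said, your structural analysis agrees with the paper's own remarks. Reducing via the Jacquet--Liu/Chai local converse theorem, invoking Deligne's stability to show that twists with $d(\tau)>2d(\pi_i)$ carry no information beyond the central character (which you are given), and isolating the band $d(\pi_i)<d(\tau)\le 2d(\pi_i)$ as the genuinely open content: this is precisely the picture the paper sketches in the paragraph following Theorem~\ref{TheoremMain} and in the sentence introducing Conjecture~\ref{Conjecture}. You are also right that the $d(\tau)>2d(\pi_i)$ step requires knowing $\omega_{\pi_1}=\omega_{\pi_2}$, which is exactly why the conjecture is stated with the same-central-character hypothesis. But you do not close the intermediate band; you flag this yourself as the main obstacle and offer only a programme, acknowledging that neither a Deligne-type stability refinement in that range nor a uniform type-theoretic computation currently exists. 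As a proof attempt this is incomplete (and cannot be otherwise, since the statement is not a theorem); as a diagnosis of why the conjecture is hard, it is accurate and consistent with the paper.
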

\noindent From \cite{Deligne}, an analog of our conjecture is true if we instead assume $d(\tau) \leq 2  d(\pi_{i})$. Furthermore, Conjecture \ref{Conjecture} is true for depth zero, simple \cite[Remark 3.18]{Adrian-Liu}, and middle supercuspidal representations.

A future generalization to our work could be refining the local converse theorem for depth $\frac{1}{N}$ minimax supercuspidal representations of $\GL(dN, F)$, with $d$ a positive integer: the case $d=1$ gives simple supercuspidal representations and the case $d=2$ gives middle supercuspidal representations. These supercuspidal representations would be constructed from the simple strata on the standard hereditary $\mathcal{O}_{F}$-order corresponding to the partition $d + \cdots + d$ of $dN$, and an element generating a field extension of $F$ with ramification index $N$ and inertial degree $d$. The methods used here should allow for similar results to be proved for this family of representations.

The organization of the paper is given as follows. In section \ref{PreliminariesAndBackground}, we give a brief account of maximal simple types and the Bushnell--Kutzko construction of supercuspidal representations of $\GL(n, F)$. Next, we discuss an explicit Whittaker function given by Pa\v{s}k\={u}nas and the second-named author which will enable us to compute the twisted gamma factors that we will define via the Rankin--Selberg convolution integrals. In section \ref{SectionDistinguishing}, we construct middle supercuspidal representations of $\GL(2N, F)$ and compute their twisted gamma factors with quasi-characters of $F^{\times}$ and simple supercuspidal representations of $\GL(N, F)$. In doing so, we are able to distinguish middle supercuspidals from each other. Finally in section \ref{SectionAlternate}, we distinguish a given middle supercuspidal representation from other depth $\frac{1}{N}$ supercuspidals of $\GL(2N, F)$ and give a proof of Theorem \ref{TheoremMain}. 

\section*{Acknowledgments}

The authors thank Sarbartha Bhattacharya, Dihua Jiang, Zhaolin Li, Sagnik Mukherjee, and Rongqing Ye for helpful discussions. The first-named author thanks the University of East Anglia for sponsoring his visit to the second-named author. Lastly, the second-named author was supported by EPSRC grant EP/V061739/1.

\section{Preliminaries and background}\label{PreliminariesAndBackground}

In this section, we give the necessary preliminary and background information needed for the remainder of this article.

\subsection{Notation}
To fix some notation, let $F$ be a non-archimedean local field with valuation ring $\mathcal{O}_{F}$, maximal ideal $\mathcal{P}_{F}$ in $\mathcal{O}_{F}$, residue field $k_{F}$ with cardinality $q_{F}$, and $\varpi_{F}$ a fixed uniformizer. Furthermore, let $\psi_{F}$ be a non-trivial additive quasi-character of $F$ with conductor $\mathcal{P}_{F}$, i.e. $\psi_{F}$ is non-trivial on $\mathcal{O}_{F}$ but trivial on $\mathcal{P}_{F}$. For any finite extension $E/F$, we write $e(E|F)$ and $f(E|F)$ for the ramification index and inertial degree of $E/F$ respectively. 

Let $\N(n, F)$ denote the unipotent radical of the standard Borel subgroup of invertible upper triangular matrices of $\GL(n, F)$ and $\text{P}(n, F)$ the standard mirabolic subgroup of $\GL(n, F)$. Furthermore, let $\Mat(n \times m, F)$ denote the space of $n \times m$ matrices with coefficients in $F$ and $I_{n}$ the $n \times n$ identity matrix of $\GL(n, F)$. Next, we let $\psi_{n}$ denote the standard smooth non-degenerate character of $\N(n, F)$ defined by
\[
\psi_{n}(u) = \sum_{i=1}^{n-1}u_{i, i+1}
\]
where $u = (u_{i, j}) \in \N(n, F)$. 

We will fix throughout the article a self-dual Haar measure on $F$, relative to $\psi_{F}$. In particular, this means $\int_{\mathcal{O}_{F}}  dx = q_{F}^{1/2}$. We also fix a Haar measure $d^{*}x$ on $F^{\times}$ such that $\int_{\mathcal{O}_{F}^{\times}}  d^{*}x = 1$. Lastly, let $\Ind$ denote smooth induction with compact induction being denoted by $c$-$\Ind$. 

\subsection{Maximal simple types}\label{MaximalSimpleTypes}

From \cite{BK}, we have that supercuspidal representations of $\GL(n, F)$ may be classified in terms of \textit{maximal simple types} which are special pairs $(J,  \lambda)$ where $J$ is a compact open subgroup of $\GL(n, F)$
and $\lambda$ is an irreducible representation of $J$. We refer to \cite{BK} for precise definitions, and results, the objects introduced in this subsection.

Let $V=F^{n}$ be an $n$-dimensional vector space over $F$ with standard basis. Thus we identify $\text{Aut}_{F}(V)$ with $\GL(n, F)$  and $A = \text{End}_{F}(V)$ with $\Mat(n \times n, F)$. Let $\mathfrak{A}$ be a principal hereditary $\mathcal{O}_{F}$-order in $A$ with Jacobson radical $\mathfrak{P}$. Define
\[
U^{0}\left(\mathfrak{A}\right) = U\left(\mathfrak{A}\right) = \mathfrak{A}^{\times}, \; U^{m}\left(\mathfrak{A}\right) = I_{n} + \mathfrak{P}^{m}, \; m \geq 1.
\]
For $m \geq 0$, choose $\beta \in A$ such that $\beta \in \mathfrak{P}^{-m} \setminus \mathfrak{P}^{1-m}$ where $E = F \left[\beta\right]$ is a field extension of $F$ and $E^{\times}$ normalizes $\mathfrak{A}$. Provided an additional technical condition is satisfied (namely that the critical exponent $k_{0}\left(\beta, \mathfrak{A}\right) < 0$), these data give a principal simple stratum $\left[\mathfrak{A}, m, 0, \beta \right]$ of $A$. Take $J = J\left(\beta,\mathfrak{A}\right)$, $J^{1} = J^{1}\left(\beta,\mathfrak{A}\right)$, and $H^{1} = H^{1}\left(\beta,\mathfrak{A}\right)$. Denote by $\mathcal{C}\left(\mathfrak{A}, \beta, \psi_{F}\right)$ the set of simple (linear) characters of $H^{1}$.

Recall the following definition of maximal simple types.
\begin{definition}
    \normalfont
The pair $(J,  \lambda)$ is called a \textit{maximal simple type} if one of the following holds:
\begin{enumerate}
    \item $J = J\left(\beta,\mathfrak{A}\right)$ is an open compact subgroup associated to a simple stratum $\left[\mathfrak{A}, m, 0, \beta \right]$ of $A$ as above, such that, if $E = F \left[\beta\right]$ and $B = \text{End}_{E}(V)$, then $\mathfrak{B} = \mathfrak{A} \cap B$  is a maximal $\mathcal{O}_{E}$-order in $B$. Moreover, there exists a simple character $\theta \in \mathcal{C}\left(\mathfrak{A}, \beta, \psi_{F}\right)$ such that
    \[
    \lambda \cong \kappa \otimes \sigma
    \]
    where $\kappa$ is a $\beta$-extension of the unique irreducible representation $\eta$ of $J^{1} = J^{1}\left(\beta,\mathfrak{A}\right)$, which contains $\theta$, and $\sigma$ is the inflation to $J$ of an irreducible cuspidal representation of
    \[
    J / J^{1} \cong U\left( \mathfrak{B} \right)/ U^{1}\left( \mathfrak{B} \right) \cong \GL(r, k_{E}),
    \]
    where $r =\frac{n}{[E:F]}$.
    
    \item $(J,  \lambda) = \left(U\left( \mathfrak{A} \right),  \sigma \right)$, where $\mathfrak{A}$ is a maximal hereditary $\mathcal{O}_{F}$-order in $A$ and $\sigma$ is the inflation to $U\left( \mathfrak{A} \right)$ of an irreducible cuspidal representation of
    \[
    U\left( \mathfrak{A} \right)/ U^{1}\left( \mathfrak{A} \right) \cong \GL(n, k_{F})
    \]
\end{enumerate}
We will regard case (2) formally as a special case of case (1) by setting $\beta = 0$, $E=F$, and $\theta$, $\eta$, $\kappa$ trivial. In either case, we write $\textbf{J} = E^{\times}J$. With these data, any irreducible supercuspidal representation $\pi$ of $\GL(n, F)$ is of the form
\[
\pi \cong c\text{-}\Ind_{\textbf{J}}^{\GL(n, F)}\Lambda
\]
for some choice of $(\textbf{J},  \Lambda)$, where $\Lambda \big|_{J} = \lambda$. We call such a pair $(\textbf{J},  \Lambda)$ an \textit{extended maximal simple type}. Let $d(\pi) = \frac{m}{e(E|F)}$ denote the \textit{depth} of $\pi$.
\end{definition}
\noindent For $\pi$ an irreducible supercuspidal representation of $\GL(n, F)$, any two extended maximal simple types in $\pi$ are conjugate in $\GL(n, F)$.

Finally, we recall that for a positive depth supercuspidal representation $\pi$, its associated simple stratum $\left[\mathfrak{A}, m, 0, \beta\right]$ has a \textit{characteristic polynomial} $\phi_{Y_{\beta}}(X) \in k_{F}[X]$ that depends only on the equivalence class of the final simple stratum in the defining sequence of $\left[\mathfrak{A}, m, 0, \beta\right]$ \cite[Chapter 2, p. 58]{BK}. 
\begin{definition}
    \normalfont
    Let $e := e\left(\mathfrak{A}\right)$ and $g := \gcd(e, m)$. We set 
    \[
    Y_{\beta} = \beta^{e/g}\varpi_{F}^{m/g} \in A.
    \]
    Let $\Phi_{Y_{\beta}}(X) \in F[X]$ be the characteristic polynomial of $Y_{\beta}$ as an $F$-endomorphism of $V$. Then, since $Y_{\beta} \in \mathfrak{A}$, we have $\Phi_{Y_{\beta}}(X) \in \mathcal{O}_{F}[X]$. We call the reduction $\phi_{Y_{\beta}}(X)$ of $\Phi_{Y_{\beta}}(X)$ modulo $\mathcal{P}_{F}$ the \textit{characteristic polynomial} of $\left[\mathfrak{A}, m, 0, \beta\right]$. Furthermore, $\phi_{Y_{\beta}}$ is a power of a monic irreducible polynomial $\phi_{\pi}(X) \in k_{F}[X]$ that we call the \textit{minimal polynomial} of $\left[\mathfrak{A}, m, 0, \beta\right]$. From \cite[Subsection 6.1]{BHK}, we have that $\phi_{\pi}$ is an invariant of $\pi$ with $\phi_{\pi}(X) \neq X$. 
\end{definition}

\subsection{Explicit Whittaker functions}\label{ExplicitWhittakerfunctions}

In this subsection, we introduce an explicit Whittaker function $\mathcal{W}_{\pi}$ in the \textit{Whittaker model} $W\left(\pi, \psi_{F}\right)$ associated with a supercuspidal representation $\pi$ of $\GL(n, F)$ \cite{Paskunas}. To do so, we first introduce Bessel functions of irreducible supercuspidal representations of $\GL(n, F)$ constructed by Pa\v{s}k\={u}nas and the second named author. We recall the basics of these Bessel functions, which rely on the construction theory of supercuspidal representations of $\GL(n, F)$ in terms of maximal simple types of Bushnell and Kutzko introduced in subsection \ref{MaximalSimpleTypes}. We will use the notation from \cite{BK} and \cite{Paskunas}.

We recall the following (\cite[Section 2]{Adrian-Liu} and \cite{BH-2}): an irreducible admissible representation $\left(\pi, V_{\pi}\right)$ of $\GL(n, F)$ is called \textit{generic} if 
\[
\Hom_{\GL(n, F)}\left(\pi,  \Ind_{\N(n, F)}^{\GL(n, F)}\psi_{n}\right) \neq 0.
\]
By the uniqueness of local Whittaker models, this $\Hom$-space is at most one-dimensional. By Frobenius reciprocity,
\[
\Hom_{\GL(n, F)}\left(\pi,  \Ind_{\N(n, F)}^{\GL(n, F)}\psi_{n}\right) \cong \Hom_{\N(n, F)}\left(\pi\big|_{\N(n, F)},  \psi_{n}\right).
\]
Therefore, $\Hom_{\N(n, F)}\left(\pi \big|_{\N(n, F)},  \psi_{n}\right)$ is also at most one dimensional. Assume that $\left(\pi, V_{\pi}\right)$ is generic. Fix a non-zero functional $l \in \Hom_{\N(n, F)}\left(\pi \big|_{\N(n, F)},  \psi_{n}\right)$, which is unique up to scalar. The Whittaker function attached to a vector $v \in V_{\pi}$ is defined by
\[
W_{v}(g) := l \left(\pi(g)v\right), \; \text{ for all } g \in \GL(n, F),
\]
so that $W_{v} \in \Ind_{\N(n, F)}^{\GL(n, F)}\psi_{n}$. The space
\[
W(\pi, \psi_{n}) := \{ W_{v} : v \in V_{\pi}\}
\]
is called the \textit{Whittaker model} of $\pi$, and $\GL(n, F)$ acts on it by right translation which we denote by $R$. It is easy to see that the Whittaker model of $\pi$ is independent of the choice of the non-zero functional $l$.

Next, we recall from \cite[Section 5]{Paskunas} the general formulation of Bessel functions. Let $\mathcal{K}$ be an open compact-modulo-centre subgroup of  $\GL(n, F)$ and let $\mathcal{U} \subset \mathcal{M} \subset \mathcal{K}$ be compact open subgroups of $\mathcal{K}$. Let $\tau$ be an irreducible smooth representation of $\mathcal{K}$ and let $\Psi$ be a linear character of $\mathcal{U}$. Take an open normal subgroup $\mathcal{N}$ of $\mathcal{K}$, which is contained in $\text{Ker}(\tau) \cap \mathcal{U}$. Let $\chi_{\tau}$ be the (trace) character of $\tau$. The associated Bessel function $\mathcal{J} \, \colon \, \mathcal{K}  \to \mathbb{C}$ of $\tau$ is defined by
\[
\mathcal{J}(g) := \left[\mathcal{U} : \mathcal{N}\right]^{-1}\displaystyle\sum_{h \in \mathcal{U}/\mathcal{N}}\Psi(h)^{-1}\chi_{\tau}(gh).
\]
This is independent of the choice of $\mathcal{N}$. The basic properties of this Bessel function which we will need are given below.
\begin{proposition}\cite[Proposition 5.3]{Paskunas}\label{Paskunas5.3}
Assume that the data introduced above satisfy the following:
\begin{itemize}
    \item $\tau \big|_{\mathcal{M}}$ is an irreducible representation of $\mathcal{M}$;
    \item $\tau \big|_{\mathcal{M}} \cong \Ind_{\mathcal{U}}^{\mathcal{M}}\left(\Psi\right)$.
\end{itemize}
Then the Bessel function $\mathcal{J}$ of $\tau$ enjoys the following properties:
\begin{enumerate}
    \item $\mathcal{J}(1)=1$;
    \item $\mathcal{J}(hg) = \mathcal{J}(gh) = \Psi(h)\mathcal{J}(g)$ for all $h \in \mathcal{U}$ and $g \in \mathcal{K}$;
    \item if $\mathcal{J}(g) \neq 0$, then $g$ intertwines $\Psi$; in particular, if $m \in \mathcal{M}$, then $\mathcal{J}(m) \neq 0$ if and only if $m \in \mathcal{U}$.
\end{enumerate}
\end{proposition}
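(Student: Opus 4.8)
The plan is to recast the statement in operator-theoretic terms, trading the trace character for an idempotent. Since $\mathcal{N}$ is normal in $\mathcal{K}$ and contained in $\operatorname{Ker}(\tau)\cap\mathcal{U}$, the hypothesis $\tau|_{\mathcal{M}}\cong\Ind_{\mathcal{U}}^{\mathcal{M}}(\Psi)$ forces $\mathcal{N}\subseteq\operatorname{Ker}\big(\Ind_{\mathcal{U}}^{\mathcal{M}}\Psi\big)$, hence $\Psi$ is trivial on $\mathcal{N}$ and every sum below descends to the finite group $\overline{\mathcal{U}}=\mathcal{U}/\mathcal{N}$. I would then introduce the averaging operator
\[
e_{\Psi}:=[\mathcal{U}:\mathcal{N}]^{-1}\sum_{h\in\mathcal{U}/\mathcal{N}}\Psi(h)^{-1}\tau(h)\in\operatorname{End}(V_{\tau}),
\]
which is the standard idempotent projecting $V_{\tau}$ onto the $\Psi$-eigenspace $V_{\tau}^{\mathcal{U},\Psi}=\{v\in V_{\tau}:\tau(h)v=\Psi(h)v\ \text{for all }h\in\mathcal{U}\}$, and which commutes with $\tau(\mathcal{U})$. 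Unwinding the definitions gives $\mathcal{J}(g)=\operatorname{tr}\big(\tau(g)e_{\Psi}\big)=\operatorname{tr}\big(e_{\Psi}\tau(g)e_{\Psi}\big)$, the second equality by $e_{\Psi}^{2}=e_{\Psi}$ and cyclicity of the trace; here $\tau$ is finite-dimensional, being an irreducible smooth representation of a group compact modulo its centre.

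For (1) the key point is that the hypotheses force $e_{\Psi}$ to have \emph{rank one}. Indeed, Frobenius reciprocity identifies $V_{\tau}^{\mathcal{U},\Psi}\cong\Hom_{\mathcal{U}}\big(\Psi,\tau|_{\mathcal{U}}\big)\cong\Hom_{\mathcal{M}}\big(\Ind_{\mathcal{U}}^{\mathcal{M}}\Psi,\ \tau|_{\mathcal{M}}\big)$, and since $\tau|_{\mathcal{M}}\cong\Ind_{\mathcal{U}}^{\mathcal{M}}\Psi$ is irreducible, Schur's lemma makes the right-hand space one-dimensional. Hence $\operatorname{tr}(e_{\Psi})=1$, and therefore $\mathcal{J}(1)=\operatorname{tr}\big(\tau(1)e_{\Psi}\big)=\operatorname{tr}(e_{\Psi})=1$.

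For (2) I would manipulate the defining sum directly: for $h\in\mathcal{U}$ and $g\in\mathcal{K}$, cyclicity of the trace gives $\chi_{\tau}(hgh')=\chi_{\tau}\big(g(h'h)\big)$ and $\chi_{\tau}(ghh')=\chi_{\tau}\big(g(hh')\big)$, so the substitution $h''=h'h$ (respectively $h''=hh'$) in the sum over $\mathcal{U}/\mathcal{N}$, together with $\Psi(h')^{-1}=\Psi(h)^{\pm1}\Psi(h'')^{-1}$, yields $\mathcal{J}(hg)=\mathcal{J}(gh)=\Psi(h)\mathcal{J}(g)$. For (3), suppose $\mathcal{J}(g)\neq0$, i.e.\ $e_{\Psi}\tau(g)e_{\Psi}\neq0$. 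Since $e_{\Psi}$ has image the line $L=V_{\tau}^{\mathcal{U},\Psi}$, a generator $v$ of $L$ satisfies $e_{\Psi}\tau(g)v\neq0$. Writing $\mathcal{U}'=\mathcal{U}\cap g\mathcal{U}g^{-1}$, the vector $\tau(g)v$ is a $\mathcal{U}'$-eigenvector with eigencharacter $x\mapsto\Psi(g^{-1}xg)$; as $e_{\Psi}$ commutes with $\tau(\mathcal{U})$, its nonzero image $e_{\Psi}\tau(g)v$ lies in $L$ yet is still a $\mathcal{U}'$-eigenvector with that same eigencharacter; but every nonzero vector of $L$ is a $\mathcal{U}'$-eigenvector with eigencharacter $\Psi|_{\mathcal{U}'}$, so $\Psi(g^{-1}xg)=\Psi(x)$ for all $x\in\mathcal{U}'$, i.e.\ $g$ intertwines $\Psi$. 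The ``in particular'' assertion then follows: for $m\in\mathcal{M}$ with $\mathcal{J}(m)\neq0$, $m$ intertwines $\Psi$, and the irreducibility of $\Ind_{\mathcal{U}}^{\mathcal{M}}\Psi$ together with Mackey's criterion forces $m\in\mathcal{U}$; conversely $\mathcal{J}(m)=\Psi(m)\mathcal{J}(1)=\Psi(m)\neq0$ for $m\in\mathcal{U}$ by (1) and (2).

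The step I expect to demand the most care is (3): one must check that ``$e_{\Psi}\tau(g)e_{\Psi}\neq0$'' genuinely produces a single vector exhibiting the clash of the two $\mathcal{U}\cap g\mathcal{U}g^{-1}$-eigencharacters, and that manipulating eigenspaces is legitimate — this is precisely where passing to the finite quotient $\mathcal{U}/\mathcal{N}$, and using that $\tau$ is finite-dimensional, is needed so that all the relevant restrictions are semisimple. Everything else is bookkeeping with the formula for an induced character and with Frobenius reciprocity.
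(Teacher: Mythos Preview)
The paper does not prove this proposition at all: it is quoted verbatim from \cite[Proposition~5.3]{Paskunas} and used as a black box, so there is no ``paper's own proof'' to compare against. That said, your argument is correct and is essentially the standard one. The identification $\mathcal{J}(g)=\operatorname{tr}\big(e_{\Psi}\tau(g)e_{\Psi}\big)$ via the rank-one idempotent $e_{\Psi}$ is exactly how such Bessel functions are usually handled, and your Frobenius reciprocity / Schur argument for $\dim V_{\tau}^{\mathcal{U},\Psi}=1$ is clean. Part~(2) is immediate from the defining sum, as you note. For part~(3), your double computation of the $\mathcal{U}'$-eigencharacter on $e_{\Psi}\tau(g)v$ --- once using $e_{\Psi}\tau(x)=\tau(x)e_{\Psi}$ to get $\Psi^{g}$, once using membership in $L$ to get $\Psi$ --- is the right idea and goes through; the appeal to Mackey's criterion for the ``in particular'' clause is also correct. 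One minor stylistic point: you do not actually need semisimplicity of any restriction in~(3), since the argument only compares two explicit eigencharacters on a single nonzero vector, so your closing caveat about ``passing to the finite quotient so that restrictions are semisimple'' is more caution than the proof requires.
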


By \cite[Proposition 1.6]{BH-2}, there is an extended maximal simple type $(\textbf{J}, \Lambda)$ in $\pi$ such that
\[
\Hom_{\N(n, F)  \cap  \textbf{J}}\left(\psi_{n}, \Lambda\right) \neq 0.
\]
Since $\Lambda$ restricts to a multiple of some simple character $\theta \in \mathcal{C}\left(\mathfrak{A}, \beta, \psi_{F}\right)$, one finds that $\theta(u) = \psi_{u}(h)$ for all $u \in \N(n, F) \cap H^{1}$. As in \cite[Definition 4.2]{Paskunas}, one defines a character $\Psi_{n} \, \colon \, (J \cap \N(n, F))  H^{1} \to \mathbb{C}^{\times}$ by
\[
\Psi_{n}(uh) := \psi_{n}(u)\theta(h)
\]
for all $u \in J \cap \N(n, F)$ and $h \in H^{1}$. By \cite[Theorem 4.4]{Paskunas}, the data
\[
\mathcal{K} = \textbf{J}, \; \tau = \Lambda, \; \mathcal{M} = (J \cap P(n, F))J^{1}, \; \mathcal{U} = (J \cap \N(n, F))H^{1}, \text{ and } \Psi = \Psi_{n}
\]
satisfy the conditions in Proposition \ref{Paskunas5.3} and hence define a Bessel function $\mathcal{J}$.

Now we define a function $\mathcal{W}_{\pi} \, \colon \, \GL(n, F) \to \mathbb{C}$ by
\begin{equation}\label{Whittaker}
    \mathcal{W}_{\pi}:= \begin{cases}
        \psi_{n}(u)\mathcal{J}(j) \; &\text{ if } g=uj \text{ with } u \in \N(n, F), \; j \in \textbf{J}, \\
        0 \; &\text{ otherwise,}
    \end{cases}
\end{equation}
which is well-defined by Proposition \ref{Paskunas5.3} (2). Then, by \cite[Theorem 5.8]{Paskunas}, $\mathcal{W}_{\pi} \in W(\pi, \psi_{F})$ is a Whittaker function for $\pi$. Lastly, we will mention a useful property about the support $\Su\left(\mathcal{W}_{\pi}\right)$ of $\mathcal{W}_{\pi}$ that we will refer to in our calculations:
\begin{thm}
\normalfont
    $\Su\left(\mathcal{W}_{\pi}\right) \cap \text{P}(n, F) = \text{N}(n, F)\left(H^{1} \cap \text{P}(n, F)\right)$ and 
    \begin{equation*}
        \mathcal{W}_{\pi}(um) = \psi_{n}(u)\psi_{\beta}(m)
    \end{equation*}
    for all $u \in \text{N}(n, F)$ and $ m \in H^{1}\cap \text{P}(n, F)$.
\end{thm}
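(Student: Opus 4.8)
The plan is to read the support of $\mathcal{W}_\pi$ directly off formula \eqref{Whittaker} and then intersect with $\text{P}(n,F)$, the crux being to pin down which elements of $\mathbf{J}=E^\times J$ can contribute. Since $|\psi_n|\equiv 1$, formula \eqref{Whittaker} gives $\Su(\mathcal{W}_\pi)=\N(n,F)\cdot\{\,j\in\mathbf{J}:\mathcal{J}(j)\neq 0\,\}$; as $\N(n,F)\subseteq\text{P}(n,F)$ and $uj\in\text{P}(n,F)$ with $u\in\text{P}(n,F)$ forces $j\in\text{P}(n,F)$, intersecting with the mirabolic gives $\Su(\mathcal{W}_\pi)\cap\text{P}(n,F)=\N(n,F)\cdot\{\,j\in\mathbf{J}\cap\text{P}(n,F):\mathcal{J}(j)\neq 0\,\}$. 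Moreover, for $u\in\N(n,F)$ and $m\in H^1\cap\text{P}(n,F)\subseteq J^1\subseteq\mathbf{J}$, the product $um$ is already written in the form occurring in \eqref{Whittaker}, so $\mathcal{W}_\pi(um)=\psi_n(u)\mathcal{J}(m)$. Since $\N(n,F)\bigl(J\cap\N(n,F)\bigr)=\N(n,F)$, the theorem reduces to showing that $\{\,j\in\mathbf{J}\cap\text{P}(n,F):\mathcal{J}(j)\neq 0\,\}=\bigl(J\cap\N(n,F)\bigr)\bigl(H^1\cap\text{P}(n,F)\bigr)$, together with $\mathcal{J}(m)=\psi_\beta(m)$ for $m\in H^1\cap\text{P}(n,F)$.

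For the set equality I would argue both inclusions. The inclusion $\supseteq$ is immediate: any $u'h$ with $u'\in J\cap\N(n,F)$ and $h\in H^1$ lies in $\mathcal{U}=\bigl(J\cap\N(n,F)\bigr)H^1$, so Proposition \ref{Paskunas5.3}(2) gives $\mathcal{J}(u'h)=\Psi_n(u')\Psi_n(h)=\psi_n(u')\theta(h)\neq 0$. For $\subseteq$ the key step --- and what I expect to be the main obstacle --- is the structural identity $\mathbf{J}\cap\text{P}(n,F)=J\cap\text{P}(n,F)$. To prove it, write $g\in\mathbf{J}$ as $g=zh$ with $z\in E^\times$ and $h\in J\subseteq\mathfrak{A}^\times$, so that $v_{\mathfrak{A}}(g)=v_E(z)$ (recall $\mathcal{O}_E^\times\subseteq J$ and $\mathbf{J}/J\cong\mathbb{Z}$ via $v_E$; here $\mathfrak{B}=\mathfrak{A}\cap B$ is maximal, so $v_{\mathfrak{A}}(\varpi_E)=1$). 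If $v_E(z)\neq 0$ we may assume $v_E(z)>0$ (replacing $g$ by $g^{-1}$), so $g\in\mathfrak{P}^{v_E(z)}$; then $g^k$ carries any fixed lattice of the defining $\mathfrak{A}$-chain strictly deeper as $k\to\infty$, whence $g$ has no eigenvalue of valuation $\le 0$ and $g^{-1}$ none of valuation $\ge 0$ --- in particular neither $g$ nor $g^{-1}$ has $1$ as an eigenvalue. But an element of $\text{P}(n,F)$ fixes the last coordinate functional (equivalently, its transpose fixes $e_n$), hence has $1$ as an eigenvalue; this contradiction forces $v_E(z)=0$, i.e.\ $g\in J$. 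Granting the identity, any $j\in\mathbf{J}\cap\text{P}(n,F)$ with $\mathcal{J}(j)\neq 0$ lies in $J\cap\text{P}(n,F)\subseteq\bigl(J\cap\text{P}(n,F)\bigr)J^1=\mathcal{M}$, so Proposition \ref{Paskunas5.3}(3) forces $j\in\mathcal{U}=\bigl(J\cap\N(n,F)\bigr)H^1$; writing $j=u'h$ with $u'\in J\cap\N(n,F)\subseteq\text{P}(n,F)$ and $h\in H^1$, and using $j\in\text{P}(n,F)$, gives $h\in H^1\cap\text{P}(n,F)$, as needed.

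For the value formula, Proposition \ref{Paskunas5.3}(2) applied with $m\in H^1\subseteq\mathcal{U}$ gives $\mathcal{J}(m)=\Psi_n(m)=\theta(m)$, so that $\mathcal{W}_\pi(um)=\psi_n(u)\theta(m)$ for $u\in\N(n,F)$ and $m\in H^1\cap\text{P}(n,F)$. The one remaining input is that $\theta$ restricts to $\psi_\beta$ on $H^1\cap\text{P}(n,F)$, which I would deduce from the structure theory of the simple characters in $\mathcal{C}(\mathfrak{A},\beta,\psi_F)$ (cf.\ \cite{BK}, and already used in \cite{Paskunas}): the discrepancy between a simple character and the ``$\beta$-character'' $\psi_\beta$ lies in directions transverse to the mirabolic and so disappears on $H^1\cap\text{P}(n,F)$. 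Together, these two points give both the asserted support and the identity $\mathcal{W}_\pi(um)=\psi_n(u)\psi_\beta(m)$. In short, the substantive content is the identity $\mathbf{J}\cap\text{P}(n,F)=J\cap\text{P}(n,F)$ --- which is precisely what makes $\Su(\mathcal{W}_\pi)\cap\text{P}(n,F)$ compact modulo $\N(n,F)$ --- together with the agreement of $\theta$ and $\psi_\beta$ on $H^1\cap\text{P}(n,F)$; everything else is bookkeeping with Proposition \ref{Paskunas5.3}.
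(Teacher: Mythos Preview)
The paper does not actually prove this theorem: it is stated without proof as a quotation of \cite[Theorem~5.8]{Paskunas} (the citation appears immediately before the statement, and the result is invoked again by that reference in the proofs of Propositions~\ref{Proposition1} and~\ref{Proposition4}). So there is no in-paper argument to compare your sketch against.

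That said, your outline is the natural one and tracks the strategy in \cite{Paskunas}: reduce via \eqref{Whittaker} to the Bessel support inside $\mathbf{J}\cap\text{P}(n,F)$, prove the structural identity $\mathbf{J}\cap\text{P}(n,F)=J\cap\text{P}(n,F)$, and then use Proposition~\ref{Paskunas5.3}(3) on $\mathcal{M}$ to land in $\mathcal{U}$. Your eigenvalue argument for the structural identity is clean and correct (an element of $\mathfrak{P}$ has all eigenvalues of positive valuation, while anything in $\text{P}(n,F)$ has $1$ as an eigenvalue).

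The one soft spot is your final step, where you assert that $\theta$ agrees with $\psi_\beta$ on $H^1\cap\text{P}(n,F)$ and justify it only heuristically (``transverse to the mirabolic''). What the Bessel formalism gives you directly is $\mathcal{W}_\pi(um)=\psi_n(u)\,\theta(m)$; the passage from $\theta$ to $\psi_\beta$ is not automatic for a general simple character. In this paper it is harmless, because in every application the stratum is such that $\mathcal{C}(\mathfrak{A},\beta,\psi_F)=\{\psi_\beta\}$ (see the beginning of Section~\ref{Middle} and Subsection~\ref{SimpleSupercuspidalRepresentations}), so $\theta=\psi_\beta$ on all of $H^1$. If you want the statement in the generality written, you should either replace $\psi_\beta$ by $\theta$ in the conclusion, or supply a precise reference in \cite{BK} or \cite{Paskunas} for the restriction identity rather than the heuristic.
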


\subsection{Simple supercuspidal representations}\label{SimpleSupercuspidalRepresentations}

In this subsection, we briefly recall the construction of \textit{simple supercuspidal representations} of $\GL(n, F)$ via type theory (\cite{Knightly} and \cite[Example 2.18]{Ye}). Let $\mathfrak{I}_{n}$ be the standard minimal hereditary $\mathcal{O}_{F}$-order in $\Mat(n \times n, F)$ and $\mathfrak{P}_{\mathfrak{I}_{n}}$ the corresponding Jacobson radical. We consider the simple stratum $\left[\mathfrak{I}_{n}, 1, 0, \beta_{u}\right]$ where
\[
\beta_{u} = \begin{pmatrix}
    & & & & (u\varpi_{F})^{-1} \\
    1 & & & & \\
     & 1 & & & \\
      & & \ddots & & \\
      & & & 1 &
\end{pmatrix}
\]
is minimal over $F$ and $E_{u} = F \left[  \beta_{u} \right]$ is a degree $n$ totally ramified extension of $F$.

Associated with the simple stratum above are the open normal subgroups of $\mathfrak{I}_{n}^{\times}$:
\[
H^{1}\left(\beta_{u}, \mathfrak{I}_{n}\right) = J^{1}\left(\beta_{u}, \mathfrak{I}_{n}\right) = U^{1}\left(\mathfrak{I}_{n}\right) = I_{n} + \mathfrak{P}_{\mathfrak{I}_{n}}
\]
and the open compact subgroup $J_{n} := J\left(\beta_{u}, \mathfrak{I}_{n}\right) = \mathcal{O}_{F}^{\times} \, U^{1}\left( \mathfrak{I}_{n}\right)$. From the singleton $\mathcal{C}\left(\mathfrak{I}_{n}, \beta_{u}, \psi_{F} \right) = \{\psi_{\beta_{u}}\}$, we may form a $\beta_{u}$-extension $\kappa_{(u,  \, \phi)} \, \colon \, \mathcal{O}_{F}^{\times}  \, U^{1}\left(\mathfrak{I}_{n}\right) \to \mathbb{C}^{\times}$ of $\psi_{\beta_{u}}$ by
\[
\kappa_{(u,  \, \phi)}(xy) := \phi(x)\psi_{\beta_{u}}(y)
\]
where $x \in \mathcal{O}_{F}^{\times}$ and $y \in U^{1}\left(\mathfrak{I}_{n}\right)$ with $\phi$ trivial on $1 + \mathcal{P}_{F}$, i.e. $\phi$ is inflated from a quasi-character of $k_{F}^{\times}$.

To form a maximal simple type with the data above, we take the pair $\left(J_{n},  \kappa_{(u,  \, \phi)}\right)$. Moreover, to form an extended maximal simple type $\left(\textbf{J}_{u},  \Lambda_{(u,  \, \phi,  \, \zeta)}\right)$, let $\Lambda_{(u,  \, \phi,  \, \zeta)}\left(\beta_{u}\right) = \zeta \in \mathbb{C}^{\times}$ so that $\Lambda_{(u,  \, \phi,  \, \zeta)}(\varpi_{F}) = \left(\zeta^{n}  \phi(u)\right)^{-1}$. Hence, we may now define a simple supercuspidal representation:
\begin{definition}
    \normalfont 
    Let $\left(J_{u},  \kappa_{(u,  \, \phi)}\right)$ be a maximal simple type as defined above. A \textit{simple supercuspidal representation} of $\GL(n, F)$ is a depth $\frac{1}{n}$ supercuspidal representation of the form 
    \[
    \pi_{(u,  \, \phi,  \, \zeta)} = c\text{-}\Ind_{\textbf{J}_{u}}^{\GL(n, F)}\Lambda_{(u,  \, \phi,  \, \zeta)}.
    \]
    Let $\omega_{(u,  \, \phi,  \, \zeta)}$ denote the central character of $\pi_{(u,  \, \phi,  \, \zeta)}$. 
\end{definition}

\begin{remark}
\normalfont
    We parametrize simple supercuspidal representations here via $(u,  \, \phi,  \, \zeta)$ as there exists a bijection between the set of isomorphism classes of simple supercuspidal representations of $\GL(n, F)$ and the set of all such triples \cite[Proposition 1.3]{Imai}. 
\end{remark}

We end this subsection with an explicit Whittaker function $\mathcal{W}_{(u,  \, \phi,  \, \zeta)} \in W\left(\pi_{(u,  \, \phi,  \, \zeta)}, \psi_{F}\right)$ via the following \cite[Section 3.3]{Adrian-Liu}, \cite[Example 2.23]{Ye}:

\hspace{0pt}\resizebox{1.0\linewidth}{!}{
  \begin{minipage}{\linewidth}
\begin{align*}
    g \mapsto  \begin{cases} 
      \hfill \psi_{n}(u)  \Lambda_{\left(u,  \chi,  \zeta\right)}(h) &, \text{ if } g = uh \in \N(n, F) \; \textbf{J}_{u} \text{ with } u \in \N(n, F), \; h \in \textbf{J}_{u} \\
      \hfill 0 & ,  \text{ otherwise}
   \end{cases}.
\end{align*}
\end{minipage}
}

\subsection{Twisted local factors}\label{TwistedLocalFactors}
In this subsection, we recall some basic facts about twisted local factors. Suppose that $\pi_{1}$ is a generic representation of $\GL(n, F)$ and $\pi_{2}$ is a generic representation of $\GL(m, F)$ with $m < n$ and central characters $\omega_{\pi_{1}}$ and $\omega_{\pi_{2}}$ respectively. Furthermore, let 
\[
w_{n, m} = \begin{pmatrix}
    I_{m} & \\
    & w_{n-m}
\end{pmatrix} \in \GL(n, F), \text{ where \; } w_{r} = \begin{pmatrix}
    & & 1 \\
    & \iddots & \\
    1 & &
\end{pmatrix} \in \GL(r, F).
\]

Next, let $W_{\pi_{1}} \in W\left(\pi_{1}, \psi_{n}\right)$ and $W_{\pi_{2}} \in W\left(\pi_{2}, \psi_{m}^{-1}\right)$. We define the \textit{Rankin--Selberg convolution integrals} $\widetilde{\Psi}$ and $\Psi$ attached to $\pi_{1}$ and $\pi_{2}$ by 
\begin{align*}
    & \widetilde{\Psi}\left(s; W_{\pi_{1}}, W_{\pi_{2}}\right) \\
    &= \int\displaylimits_{\N(m, F) \setminus \GL(m, F)}\int\displaylimits_{\Mat(n-m-1 \times m, F)}W_{\pi_{1}}\left(\begin{pmatrix}
        h & & \\
        x & I_{n-m-1} & \\
        & & 1
    \end{pmatrix}\right)W_{\pi_{2}}(h)|\det(h)|^{s-\frac{n-m}{2}} \; dx \; dh,
\end{align*}
and 
\[
\Psi\left(s; W_{\pi_{1}}, W_{\pi_{2}}\right) = \int\displaylimits_{\N(m, F) \setminus \GL(m, F)} W_{\pi_{1}}\left(\begin{pmatrix}
    h & \\
    & I_{n-m}
\end{pmatrix}\right)W_{\pi_{2}}(h)|\det(h)|^{s-\frac{n-m}{2}} \; dh.
\]
These integrals are absolutely convergent for $\text{Re}(s)$ sufficiently
large and are rational functions of $q_{F}^{-s}$ \cite[Section 2.7]{JPSS}.

From these integrals, we obtain the following equation which defines the twisted gamma factor $\gamma(s, \pi_{1} \times \pi_{2}, \psi_{F})$:
\begin{thm}\cite[Section 2.7]{JPSS}\label{GammaFactor2.7}
    There is a rational function $\gamma(s, \pi_{1} \times \pi_{2}, \psi_{F}) \in \mathbb{C}\left(q_{F}^{-s}\right)$ such that
    \[
    \widetilde{\Psi}\left(1-s; \rho(w_{n, m})\widetilde{W}_{\pi_{1}}, \widetilde{W}_{\pi_{2}}\right) = \omega_{\pi_{2}}(-1)^{n-1}\gamma(s, \pi_{1} \times \pi_{2}, \psi_{F})\Psi\left(s; W_{\pi_{1}}, W_{\pi_{2}}\right),
    \]
for all $W_{\pi_{1}} \in W\left(\pi_{1}, \psi_{n}\right)$, $W_{\pi_{2}} \in W\left(\pi_{2}, \psi_{m}^{-1}\right)$, where $\widetilde{W}_{\pi_{1}}(g) = W_{\pi_{1}}\left(w_{n}\leftidx^{t}g^{-1}\right)$ and $\widetilde{W}_{\pi_{2}}(h) = W_{\pi_{2}}\left(w_{m}\leftidx^{t}h^{-1}\right)$, for $g \in \GL(n, F)$ and $h \in \GL(m, F)$.
\end{thm}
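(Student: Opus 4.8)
The plan is to follow the classical argument of Jacquet, Piatetski--Shapiro and Shalika: one exhibits both sides of the asserted identity, viewed as bilinear forms in the pair $(W_{\pi_1},W_{\pi_2})$, as elements of one and the same \emph{one-dimensional} space of equivariant bilinear forms; they are therefore proportional over $\mathbb{C}(q_F^{-s})$, and one defines $\gamma(s,\pi_1\times\pi_2,\psi_F)$ to be the proportionality factor after stripping off the explicit constant $\omega_{\pi_2}(-1)^{n-1}$. Rationality of the gamma factor is then inherited from the rationality of the individual Rankin--Selberg integrals.

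First I would fix the analytic input. As already recorded above, $\Psi(s;W_{\pi_1},W_{\pi_2})$ converges absolutely for $\mathrm{Re}(s)$ large and represents a rational function of $q_F^{-s}$, and symmetrically $\widetilde\Psi(s;W_{\pi_1},W_{\pi_2})$ converges for $\mathrm{Re}(s)$ small and is rational; these facts rest on the asymptotics of Whittaker functions restricted to the diagonal torus (controlled by the finitely many exponents of the generic representations $\pi_1,\pi_2$) together with the rationality principle for $p$-adic integrals against $|\det|^{s}$. Granting them, each of the two expressions in the statement continues to a well-defined element of $\mathbb{C}(q_F^{-s})$, so it is enough to verify the identity as an identity of rational functions; by analytic continuation this reduces to checking it on any nonempty open set of $s$ on which every integral occurring is absolutely convergent.

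Next I would identify the relevant space of bilinear forms. Embedding $\GL(m,F)\hookrightarrow\GL(n,F)$ via $h\mapsto\mathrm{diag}(h,I_{n-m})$ and making the substitution $h\mapsto hh_0^{-1}$ in the integral defining $\Psi$, one sees that $B_s\colon(W_{\pi_1},W_{\pi_2})\mapsto\Psi(s;W_{\pi_1},W_{\pi_2})$ belongs, for all but finitely many $s$, to $\Hom_{\GL(m,F)}\!\bigl(\pi_1\otimes\pi_2,\,|\det|^{-(s-(n-m)/2)}\bigr)$, the group acting on Whittaker vectors through the restriction of $R$. The decisive input is the local uniqueness theorem of \cite{JPSS}, which asserts that this $\Hom$-space is at most one-dimensional away from a finite set of $s$. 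A direct manipulation --- using that $\widetilde W_{\pi_i}$ is a Whittaker function for the contragredient $\widetilde\pi_i$ relative to the inverse additive character, that conjugation by $w_{n,m}$ exchanges the unipotent subgroups appearing on the two sides, and the change of variables in $\widetilde\Psi$ --- then shows that $\widetilde B_s\colon(W_{\pi_1},W_{\pi_2})\mapsto\widetilde\Psi(1-s;\rho(w_{n,m})\widetilde W_{\pi_1},\widetilde W_{\pi_2})$ transforms under $\GL(m,F)$ by the \emph{same} character, hence lies in the same one-dimensional space; the scalar $\omega_{\pi_2}(-1)^{n-1}$ is precisely what this computation extracts from the action of $w_{n,m}$ and the chosen normalization of $\psi_F$.

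Finally, since $B_s$ is not identically zero --- one can, for instance, choose $W_{\pi_1}$ and $W_{\pi_2}$ in the Kirillov models so that $\Psi(s;W_{\pi_1},W_{\pi_2})$ is a nonzero element of $\mathbb{C}(q_F^{-s})$ --- one-dimensionality of the $\Hom$-space forces $\widetilde B_s=\omega_{\pi_2}(-1)^{n-1}\gamma(s,\pi_1\times\pi_2,\psi_F)B_s$ for a scalar $\gamma(s,\pi_1\times\pi_2,\psi_F)$ independent of $W_{\pi_1},W_{\pi_2}$; realizing this scalar as a ratio of two rational functions exhibits it as an element of $\mathbb{C}(q_F^{-s})$, which is the claim. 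I expect the main obstacle to be the local multiplicity-one statement $\dim\Hom_{\GL(m,F)}(\cdots)\le1$ for $m<n$: a self-contained proof requires the analysis of the $\GL(m,F)$-orbits on the appropriate flag-type variety attached to the pair $(\GL(n,F),\GL(m,F))$ and the vanishing of the contributions of all but the open orbit, and it is exactly there that the genericity of $\pi_1$ and $\pi_2$ enters. The remaining steps are formal manipulations of Whittaker functions and contragredients, or are quoted from the convergence and rationality facts already cited.
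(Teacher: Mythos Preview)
The paper does not prove this statement at all: it is quoted verbatim as \cite[Section 2.7]{JPSS} and used as a black box, so there is no ``paper's own proof'' to compare against. Your sketch is a faithful outline of the original Jacquet--Piatetski-Shapiro--Shalika argument (rationality of the integrals, both bilinear forms landing in the same one-dimensional $\Hom$-space by local multiplicity one, extraction of the proportionality constant), and is correct as far as it goes; but for the purposes of this paper no proof is expected or required here.
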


\begin{definition}
    \normalfont
    The local factor $\epsilon(s, \pi_{1} \times \pi_{2}, \psi_{F})$ is defined as the ratio
    \[
    \epsilon(s, \pi_{1} \times \pi_{2}, \psi_{F}) = \frac{\gamma(s, \pi_{1} \times \pi_{2}, \psi_{F})L(s, \pi_{1} \times \pi_{2})}{L(1-s, \pi_{1}^{\vee}\times \pi_{2}^{\vee})}
    \]
    where $\pi_{1}^{\vee}$ and $\pi_{2}^{\vee}$ denote the contragredients of $\pi_{1}$ and $\pi_{2}$ respectively. 
\end{definition}

\begin{thm}\cite[Theorem 3.4]{Cogdell0}
    If $\pi_{1}$ and $\pi_{2}$ are both (unitary) supercuspidal representations, then $L(s, \pi_{1} \times \pi_{2}) \equiv 1$, hence, $\epsilon(s, \pi_{1} \times \pi_{2}, \psi_{F}) = \gamma(s, \pi_{1} \times \pi_{2}, \psi_{F})$.
\end{thm}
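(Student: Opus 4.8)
The plan is to reduce the statement to the standard fact that the Rankin--Selberg integrals entering the definition of $L(s, \pi_{1} \times \pi_{2})$ are entire once $\pi_{1}$ is supercuspidal, and then to read off $L \equiv 1$ from the shape of the $L$-factor. Recall that, by Jacquet--Piatetski-Shapiro--Shalika, $L(s, \pi_{1} \times \pi_{2})$ is a generator of the form $P(q_{F}^{-s})^{-1}$, with $P \in \mathbb{C}[X]$ and $P(0) = 1$, of the fractional $\mathbb{C}[q_{F}^{s}, q_{F}^{-s}]$-ideal spanned by the local integrals $\Psi(s; W_{\pi_{1}}, W_{\pi_{2}})$, $\widetilde{\Psi}(s; W_{\pi_{1}}, W_{\pi_{2}})$ and their analogues carrying an additional unipotent integration. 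Hence it suffices to show that each of these integrals is a Laurent polynomial in $q_{F}^{-s}$: then the ideal they span is contained in $\mathbb{C}[q_{F}^{s}, q_{F}^{-s}]$, so $P(q_{F}^{-s})$ must be a unit there, hence a monomial, hence identically $1$ since $P(0) = 1$.

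The first real step is to bring in the compactness of supercuspidal representations through the Kirillov model: because $\pi_{1}$ is supercuspidal, $\pi_{1}|_{\text{P}(n, F)} \cong c\text{-}\Ind_{\N(n, F)}^{\text{P}(n, F)}\psi_{n}$, so for every $W_{\pi_{1}} \in W(\pi_{1}, \psi_{n})$ the restriction $W_{\pi_{1}}|_{\text{P}(n, F)}$ is supported on a set compact modulo $\N(n, F)$ --- the same phenomenon as the support statement recorded in subsection \ref{ExplicitWhittakerfunctions}, where the support meets $\text{P}(n, F)$ only in $\N(n, F)(H^{1} \cap \text{P}(n, F))$. Next I would transfer this compactness to the integration variable. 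Since $m < n$, the matrix $\mathrm{diag}(h, I_{n-m})$ lies in $\text{P}(n, F)$ for $h \in \GL(m, F)$, so $h \mapsto W_{\pi_{1}}(\mathrm{diag}(h, I_{n-m}))$ descends to a function on $\N(m, F) \backslash \GL(m, F)$ supported on the preimage of a compact set under the natural map $\N(m, F) \backslash \GL(m, F) \to \N(n-1, F) \backslash \GL(n-1, F)$; via the Iwasawa decomposition this map is proper (on diagonal tori it is the inclusion of a coordinate sublattice of $\mathbb{Z}^{n-1}$, the remaining coordinates staying bounded), so the integrand of $\Psi(s; W_{\pi_{1}}, W_{\pi_{2}})$ is supported on a compact subset of $\N(m, F) \backslash \GL(m, F)$. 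The same reasoning applies to $\widetilde{\Psi}$ and to the integrals carrying an extra $\Mat$-integration, that additional variable also being confined to a compact set once $W_{\pi_{1}}$ is required to be nonzero.

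With compact support in hand the rest is routine: on such a compact set $|\det h|$ takes only finitely many values, so $|\det h|^{s - (n-m)/2}$ is a finite $\mathbb{C}$-linear combination of powers $q_{F}^{-ks}$, while $W_{\pi_{2}}$ is bounded by smoothness; hence each integral converges for all $s$ and lies in $\mathbb{C}[q_{F}^{s}, q_{F}^{-s}]$, which yields $L(s, \pi_{1} \times \pi_{2}) \equiv 1$ by the first paragraph. For the last assertion, the contragredient of a unitary supercuspidal representation is again a unitary supercuspidal representation, so likewise $L(1-s, \pi_{1}^{\vee} \times \pi_{2}^{\vee}) \equiv 1$; substituting both into the definition of $\epsilon(s, \pi_{1} \times \pi_{2}, \psi_{F})$ gives $\epsilon(s, \pi_{1} \times \pi_{2}, \psi_{F}) = \gamma(s, \pi_{1} \times \pi_{2}, \psi_{F})$.

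The main obstacle I anticipate is the middle step: making the passage from ``compactly supported modulo $\N(n, F)$ on $\text{P}(n, F)$'' to ``compactly supported modulo $\N(m, F)$ on $\GL(m, F)$'' rigorous (the properness assertion), and keeping precise track of which family of local integrals generates the $L$-ideal so that every member is seen to be a Laurent polynomial, including checking that the auxiliary unipotent integrations cause no divergence. I would also flag that the statement tacitly uses the standing hypothesis $m < n$ of this subsection: for $m = n$ the local integrals need not be entire --- for instance $L(s, \pi \times \pi^{\vee})$ has a pole --- so one must additionally exclude $\pi_{2}$ from being an unramified twist of $\pi_{1}^{\vee}$.
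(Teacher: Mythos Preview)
The paper does not supply its own proof of this statement; it is simply quoted as \cite[Theorem 3.4]{Cogdell0}. Your argument is precisely the standard one found in that reference (and ultimately in \cite{JPSS}): supercuspidality of $\pi_{1}$ forces compact support of Whittaker functions on $\text{P}(n,F)$ modulo $\N(n,F)$, which under the embedding $h\mapsto\mathrm{diag}(h,I_{n-m})$ makes every local integral a Laurent polynomial in $q_{F}^{-s}$, so the fractional ideal they generate is all of $\mathbb{C}[q_{F}^{s},q_{F}^{-s}]$ and $L\equiv 1$. Your sketch is correct, and your closing caveat is well placed: the standing hypothesis $m<n$ in this subsection is what makes the argument go through without further restrictions on $\pi_{2}$.
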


Finally, we introduce the \textit{conductor} $f\left(\pi_{1} \times \pi_{2}, \psi_{F}\right)$ attached to $\pi_{1}$ and $\pi_{2}$. 
\begin{thm}\label{TheoremConductor}\cite[Section 2.7]{JPSS}\label{Conductor}
    Let $\pi_{1}$ and $\pi_{2}$ be as above. There are $c \in \mathbb{C}^{\times}$ and $f\left(\pi_{1} \times \pi_{2}, \psi_{F}\right) \in \mathbb{Z}$ such that 
    \[
    \gamma(s, \pi_{1} \times \pi_{2}, \psi_{F}) = c \cdot  q_{F}^{-f\left(\pi_{1} \times \pi_{2}, \psi_{F}\right)s}.
    \]
\end{thm}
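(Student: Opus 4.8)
The plan is to deduce the statement from the local functional equation of Theorem~\ref{GammaFactor2.7} together with the triviality of the $L$-factors for supercuspidal representations. Since $\pi_1,\pi_2$ are supercuspidal and contragredients of supercuspidals are again supercuspidal, \cite[Theorem 3.4]{Cogdell0} (or directly the analogous vanishing statement for $m<n$ in \cite[Section 2.7]{JPSS}) gives $L(s,\pi_1\times\pi_2)\equiv L(1-s,\pi_1^\vee\times\pi_2^\vee)\equiv 1$, so that $\gamma(s,\pi_1\times\pi_2,\psi_F)=\epsilon(s,\pi_1\times\pi_2,\psi_F)$. Writing $X=q_F^{-s}$, it therefore suffices to show that $\gamma(s,\pi_1\times\pi_2,\psi_F)$, which a priori is only a rational function of $X$, is in fact a Laurent monomial $c\,X^{f}$; equivalently, that as a rational function of $X$ it has neither a zero nor a pole in $\mathbb{C}^\times$.

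First I would show that $\gamma$ has no pole in $\mathbb{C}^\times$. Rearranging the functional equation gives, for all admissible data,
\[
\gamma(s,\pi_1\times\pi_2,\psi_F)\,\Psi\!\left(s;W_{\pi_1},W_{\pi_2}\right)=\omega_{\pi_2}(-1)^{n-1}\,\widetilde{\Psi}\!\left(1-s;\rho(w_{n,m})\widetilde{W}_{\pi_1},\widetilde{W}_{\pi_2}\right).
\]
Because $\pi_1$ is supercuspidal, its Whittaker functions have support compact modulo $\N(n,F)$ when restricted to the mirabolic $\mathrm{P}(n,F)$ (this is precisely the structural feature underlying the explicit Whittaker function of Subsection~\ref{ExplicitWhittakerfunctions}), so for any choice of data the integrals defining $\Psi$ and $\widetilde{\Psi}$ reduce to finite sums and hence are Laurent polynomials in $X$, in particular entire on $\mathbb{C}^\times$. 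Thus the right-hand side above is entire on $\mathbb{C}^\times$; choosing data so that some finite combination $\sum_i\Psi(s;W_{\pi_1}^{(i)},W_{\pi_2}^{(i)})$ is a nonzero constant — possible since the local integrals span the fractional ideal $L(s,\pi_1\times\pi_2)\,\mathbb{C}[X^{\pm1}]=\mathbb{C}[X^{\pm1}]$, or simply by localising the integral near the identity — and using the bilinearity of the functional equation, we conclude $\gamma(s,\pi_1\times\pi_2,\psi_F)\in\mathbb{C}[X^{\pm1}]$.

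The same argument applied to the pair $(\pi_1^\vee,\pi_2^\vee)$ with the conjugate character $\psi_F^{-1}$ shows that $\gamma(1-s,\pi_1^\vee\times\pi_2^\vee,\psi_F^{-1})$ is also a Laurent polynomial in $X$ (after the substitution $s\mapsto 1-s$, i.e. $X\mapsto q_F^{-1}/X$, which preserves $\mathbb{C}[X^{\pm1}]$). On the other hand, applying the functional equation twice — using $\widetilde{\widetilde{W}}=W$ up to a central sign, so that the transformation $W\mapsto\widetilde W$ is essentially involutive — yields the standard relation
\[
\gamma(s,\pi_1\times\pi_2,\psi_F)\,\gamma(1-s,\pi_1^\vee\times\pi_2^\vee,\psi_F^{-1})=c_0
\]
for a nonzero constant $c_0$ (see \cite[Section 2.7]{JPSS}). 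A product of two elements of $\mathbb{C}[X^{\pm1}]$ equal to a nonzero constant forces each factor to be a unit, hence a monomial; therefore $\gamma(s,\pi_1\times\pi_2,\psi_F)=c\,X^{f}=c\,q_F^{-fs}$ for some $c\in\mathbb{C}^\times$ and some $f\in\mathbb{Z}$, which is the assertion (with $f=f(\pi_1\times\pi_2,\psi_F)$).

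I expect the main obstacle to be the two ``softness'' inputs in the middle step: the precise statement that the Rankin--Selberg integrals attached to a supercuspidal $\pi_1$ are Laurent polynomials rather than merely rational functions, and the existence of Whittaker data making a finite sum of $\Psi$-integrals a nonzero constant (equivalently, that the ideal generated by the $\Psi$'s is all of $\mathbb{C}[X^{\pm1}]$). Both are standard consequences of the structure of the Kirillov model of a supercuspidal representation — the restriction map $W_{\pi_1}\mapsto W_{\pi_1}|_{\mathrm{P}(n,F)}$ surjects onto $C_c^\infty\bigl(\N(n,F)\backslash\mathrm{P}(n,F),\psi_n\bigr)$ — but they are exactly where supercuspidality of $\pi_1$ is genuinely used, and I would either quote them from \cite{JPSS} (or extract them from the explicit Whittaker function $\mathcal{W}_\pi$ of Subsection~\ref{ExplicitWhittakerfunctions}) rather than reprove them in full.
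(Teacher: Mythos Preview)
The paper does not give its own proof of this statement: it is quoted as a known result from \cite[Section 2.7]{JPSS}, with no argument supplied. So there is nothing to compare your proposal against.

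That said, your proof sketch is correct and is essentially the standard way one extracts the monomiality of the $\epsilon$-factor from the local functional equation in the supercuspidal case. One small clarification: the phrase ``as above'' in the statement is meant to refer to the immediately preceding hypothesis that $\pi_1,\pi_2$ are (unitary) supercuspidal, not merely generic; for arbitrary generic $\pi_i$ the $L$-factors need not be $1$ and $\gamma$ is then only $\epsilon$ times a ratio of $L$-factors, so the conclusion as stated would fail. Your argument correctly uses supercuspidality in both places where it is needed: to make the zeta integrals Laurent polynomials (via compactness of Whittaker support modulo $\N(n,F)$), and to make the fractional ideal they generate equal to all of $\mathbb{C}[q_F^{\pm s}]$. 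The final step---$\gamma(s)\gamma'(1-s)=\text{const}$ forces each Laurent polynomial factor to be a unit, hence a monomial---is fine. The constant $c_0$ in that relation is in fact~$1$ (see \cite{JPSS}), but you do not need this.
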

We further note that $f\left(\pi_{1} \times \pi_{2}, \psi_{F}\right) = f\left(\pi_{1} \times \pi_{2}\right)-nm$ where $f\left(\pi_{1} \times \pi_{2}\right)$ is independent of $\psi_{F}$ \cite{BHK}.

\subsection{Central characters}\label{CentralCharacter}

In this subsection, we show that for two irreducible supercuspidal representations $\pi_{1}$ and $\pi_{2}$ of $\GL(n, F)$, $n \geq 2$ of positive depth $d$, one may choose a finite set $\Xi_{d}$ of quasi-characters of $F^{\times}$ such that if the twisted gamma factors $\gamma\left(s, \pi_{i} \times \chi, \psi_{F}\right)$ with $\chi \in \Xi_{d}$ are equal, then the central characters of $\pi_{1}$ and $\pi_{2}$ are equal. To begin, we recall \cite[Proposition 2.6]{Jiang}:
\begin{proposition}\label{Jiang}
    Let $\pi$ be an irreducible generic representation of $\GL(n, F)$ with $n \geq 2$ and $\omega_{\pi}$ the central character of $\pi$. Then there exists $m_{\pi}$ such that, for any character $\chi$ of $F^{\times}$ of conductor $m \geq m_{\pi}$ and any $c \in \mathcal{P}_{F}^{-m}$ satisfying $\chi(1+x) = \psi_{F}(cx)$ for $x \in \mathcal{P}_{F}^{\lfloor\frac{m}{2}\rfloor+1}$, we have
    \[
    L(s, \pi \times \chi) = 1 \text{ and \,} \gamma(s, \pi \times \chi, \psi_{F}) = \omega_{\pi}(c)^{-1}\gamma(s, \chi, \psi_{F})^{n}.
    \]
\end{proposition}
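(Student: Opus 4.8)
The plan is to read the identity off from the local functional equation of Theorem~\ref{GammaFactor2.7} for the pair $\pi\times\chi$ on $\GL(n,F)\times\GL(1,F)$, using a Whittaker function of $\pi$ adapted to the ramification of $\chi$. Viewing $\chi$ as a generic representation of $\GL(1,F)$, its Whittaker model is $\chi$ itself, so in that theorem $\widetilde{W}_{\pi_2}=\chi^{-1}$ and $\omega_{\pi_2}(-1)^{n-1}=\chi(-1)^{n-1}$, and the integral on the $\Psi$-side is
\[
\Psi(s;W,\chi)=\int_{F^{\times}}W(\mathrm{diag}(a,1,\dots,1))\,\chi(a)\,|a|^{s-\frac{n-1}{2}}\,d^{*}a .
\]
I would first dispose of the $L$-factor: the integrand is supported on $\{v(a)\ge k_0\}$, over any bounded range of $v(a)$ it is a Laurent polynomial in $q_F^{-s}$, and as $v(a)\to+\infty$ it is governed by an asymptotic expansion $W(\mathrm{diag}(a,1,\dots,1))=\sum_j\xi_j(a)$ in which the $\xi_j$ are finite functions on $F^{\times}$ whose ramification is bounded in terms of $\pi$ alone. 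Once $\mathrm{cond}(\chi)$ exceeds that bound---our $m_\pi$---each tail term has $\int_{\mathcal O_F^{\times}}\chi(u)\xi_j(u)\,d^{*}u=0$, so $\Psi(s;W,\chi)$ is a Laurent polynomial for every $W$; applying the same to $\pi^{\vee},\chi^{-1}$ gives $L(s,\pi\times\chi)=L(1-s,\pi^{\vee}\times\chi^{-1})=1$, whence $\gamma(s,\pi\times\chi,\psi_F)=\epsilon(s,\pi\times\chi,\psi_F)$ is a monomial in $q_F^{-s}$ and only its constant and degree remain to be pinned down.

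For that I would choose a test vector tailored to $\chi$. Since $\pi$ is generic, the restriction of $W(\pi,\psi_n)$ to the mirabolic $\text{P}(n,F)$ contains $c\text{-}\Ind_{\N(n,F)}^{\text{P}(n,F)}\psi_n$ (with equality when $\pi$ is supercuspidal; in general the higher Bernstein--Zelevinsky layers will not interfere below), and $\{\mathrm{diag}(a,1,\dots,1):a\in\mathcal O_F^{\times}\}$ maps homeomorphically onto a compact subset of $\N(n,F)\backslash\text{P}(n,F)$, so one can choose $W=W_\chi\in W(\pi,\psi_n)$ supported, within $\text{P}(n,F)$, on a small neighbourhood of that set and with $W_\chi(\mathrm{diag}(a,1,\dots,1))=\chi^{-1}(a)\,\mathbf{1}_{\mathcal O_F^{\times}}(a)$. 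Then $\Psi(s;W_\chi,\chi)=\int_{\mathcal O_F^{\times}}d^{*}a=1$ identically in $s$, and Theorem~\ref{GammaFactor2.7} collapses to
\[
\gamma(s,\pi\times\chi,\psi_F)=\chi(-1)^{n-1}\,\widetilde\Psi\bigl(1-s;\,\rho(w_{n,1})\widetilde{W_\chi},\,\chi^{-1}\bigr).
\]

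It then remains to evaluate this right-hand side, and this is where the real work lies. The operator $W\mapsto\widetilde W$, $\widetilde W(g)=W(w_n\,{}^{t}g^{-1})$, conjugates $\text{P}(n,F)$ to the stabiliser of $e_1$ under transpose-inverse and absorbs $w_{n,1}$, so it transports the mirabolic data prescribed above onto the opposite big cell $\overline{\N}(n,F)\,T(F)\,\N(n,F)$. Unfolding $\widetilde\Psi$ there, the $\chi^{-1}$ carried by a unit coordinate is turned---using the hypothesis $\chi(1+x)=\psi_F(cx)$ for $x\in\mathcal P_F^{\lfloor m/2\rfloor+1}$---into an additive character $\psi_F(c\,\cdot\,)$ of the unipotent variables, the integral factors into $n$ one-dimensional Tate zeta integrals $\int_{F^{\times}}\chi(t_i)\,\psi_F(ct_i)\,|t_i|^{s-\frac12}\,d^{*}t_i$ (each equal to $\gamma(s,\chi,\psi_F)$ for the self-dual measure), and the Jacobian of the scaling that moves $\rho(w_{n,1})\widetilde{W_\chi}$ into the big cell contributes the central value $\omega_\pi(c)^{-1}$ and absorbs the sign. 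This yields $\gamma(s,\pi\times\chi,\psi_F)=\omega_\pi(c)^{-1}\gamma(s,\chi,\psi_F)^{n}$; alternatively, the same computation can be pushed through the Godement--Jacquet functional equation, where $\omega_\pi(c)^{-1}$ arises directly from a $\chi\circ\det$ twist. The main obstacle is precisely this bookkeeping: making the Weyl-element conjugation explicit enough to see that exactly $n$ Tate integrals and exactly the constant $\omega_\pi(c)^{-1}$ emerge with all Haar-measure and sign normalisations in place, and, when $\pi$ is not supercuspidal, checking that the higher Bernstein--Zelevinsky layers of $\pi|_{\text{P}(n,F)}$ neither obstruct the choice of $W_\chi$ nor enter $\widetilde\Psi$; by comparison, fixing $m_\pi$ and proving $L(s,\pi\times\chi)=1$ is routine.
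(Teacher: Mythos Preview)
The paper does not prove this proposition at all: it is quoted verbatim as \cite[Proposition~2.6]{Jiang} and used as a black box (see the sentence ``To begin, we recall \cite[Proposition 2.6]{Jiang}'' preceding the statement). So there is no ``paper's own proof'' to compare against; the result is imported from Jiang--Nien--Stevens.

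As for your outline itself: the overall strategy---pick a Whittaker function adapted to $\chi$ so that the $\Psi$-side is $1$, then compute the $\widetilde\Psi$-side---is exactly the standard stability argument, and your treatment of $L(s,\pi\times\chi)=1$ via the asymptotics of $W$ along the torus is fine. The part that is genuinely incomplete is the paragraph where you assert that $\widetilde\Psi$ ``factors into $n$ one-dimensional Tate zeta integrals'' each equal to $\gamma(s,\chi,\psi_F)$, with the Jacobian producing $\omega_\pi(c)^{-1}$. That is the entire content of the proposition, and you have described the expected \emph{outcome} rather than a mechanism. Concretely: the $\widetilde\Psi$-integral for $m=1$ involves both an integration over $\N(n,F)\backslash\GL(n,F)$ (reduced to $F^\times$) and an $x$-integration over $F^{n-2}$; after applying $w_n\,{}^t(\cdot)^{-1}$ and $w_{n,1}$ to your $W_\chi$, the support condition you imposed on $W_\chi|_{\text{P}(n,F)}$ does not by itself pin down $W_\chi$ on the relevant domain (which is not contained in $\text{P}(n,F)$), so the integrand is not yet determined. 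One needs either a more careful choice of $W_\chi$ (e.g.\ a Howe vector, as in the cited reference) whose behaviour off the mirabolic is controlled, or an inductive/recursive argument that peels off one Tate factor at a time. Your alternative suggestion via Godement--Jacquet is actually closer to how such identities are usually obtained cleanly, but again the emergence of $\omega_\pi(c)^{-1}$ and of exactly $n$ Tate factors requires the explicit change of variables $g\mapsto c^{-1}g$ in the zeta integral together with the known shape of $\epsilon(s,\chi,\psi_F)$; none of that bookkeeping is present here.
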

\noindent From \cite{Harris} and \cite[Lemma 2.2]{Xu}, one may choose any $m_{\pi} > 2 d(\pi)$ in the preceding proposition. Next, we recall \cite[Proposition 2.1]{Xu}:
\begin{proposition}\label{Xu}
    Let $\pi_{1}$ and $\pi_{2}$ be two irreducible supercuspidal representations of $\GL(n, F)$ such that their standard gamma factors $\gamma(s, \pi_{i}, \psi_{F})$ agree. Then $\pi_{1}$ and $\pi_{2}$ have the same depth. 
\end{proposition}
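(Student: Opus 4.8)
The plan is to read the \emph{Swan conductor} of $\pi_i$ off its standard gamma factor, and then to identify that integer with $n$ times the depth. Assume first that $n\ge 2$. Since $\pi_i$ is supercuspidal, the standard $L$-factor $L(s,\pi_i)$ — equivalently the Rankin--Selberg $L$-factor $L(s,\pi_i\times\mathbf{1})$ against the trivial quasi-character $\mathbf{1}$ of $\GL(1,F)$ — is identically $1$, so the standard gamma factor coincides with $\gamma(s,\pi_i\times\mathbf{1},\psi_F)$. Applying Theorem~\ref{TheoremConductor} with $m=1$ then gives $\gamma(s,\pi_i,\psi_F)=c_i\,q_F^{-f_i s}$ with $c_i\in\mathbb{C}^\times$ and $f_i=f(\pi_i\times\mathbf{1})-n\in\mathbb{Z}$; since twisting by the unramified character $\mathbf{1}$ leaves the conductor unchanged, $f(\pi_i\times\mathbf{1})$ is the conductor exponent $a(\pi_i)$ of $\pi_i$, so $f_i=a(\pi_i)-n=\mathrm{sw}(\pi_i)$ is its Swan conductor. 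Hence the hypothesis $\gamma(s,\pi_1,\psi_F)=\gamma(s,\pi_2,\psi_F)$ already forces $\mathrm{sw}(\pi_1)=\mathrm{sw}(\pi_2)$, and everything comes down to showing that $\mathrm{sw}(\pi)$ determines $d(\pi)$.

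The identity to establish is $\mathrm{sw}(\pi)=n\,d(\pi)$ — equivalently $a(\pi)=n(1+d(\pi))$ — for every irreducible supercuspidal $\pi$ of $\GL(n,F)$; granting it, the common value $\mathrm{sw}(\pi_1)=\mathrm{sw}(\pi_2)$ immediately yields $d(\pi_1)=d(\pi_2)$. There are two routes. Type-theoretically: if $\pi$ has attached simple stratum $[\mathfrak{A},m,0,\beta]$ with $E=F[\beta]$, the explicit conductor formula of Bushnell--Henniart--Kutzko \cite{BHK} evaluates $a(\pi)=n+\tfrac{nm}{e(E|F)}$, which is precisely $n(1+d(\pi))$ since $d(\pi)=m/e(E|F)$ and $e(E|F)\mid n$. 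On the Galois side: transport $\pi$ along the local Langlands correspondence to an irreducible $n$-dimensional representation $\sigma$ of the Weil group $W_F$; the correspondence preserves $\gamma$-factors (hence Swan conductors) and preserves depth, so it is enough to see $\mathrm{sw}(\sigma)=n\,d(\sigma)$. Since each ramification subgroup $W_F^{(u)}$ is normal in $W_F$, conjugation preserves ramification breaks, so Clifford theory applied to $P_F\triangleleft I_F\triangleleft W_F$ shows that $\sigma|_{P_F}$ has a single ramification break, necessarily $d(\sigma)$; therefore every break of $\sigma$ equals $d(\sigma)$ and $\mathrm{sw}(\sigma)=(\dim\sigma)\,d(\sigma)=n\,d(\sigma)$.

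I expect the identity $\mathrm{sw}(\pi)=n\,d(\pi)$ to be the only substantive point: the passage from gamma factors to Swan conductors is a formal consequence of the functional equation and the vanishing of the supercuspidal $L$-factor, whereas pinning down the exact dependence of the conductor on the depth requires either unwinding the Bushnell--Henniart--Kutzko conductor formula, or invoking the depth-preserving local Langlands correspondence together with the break decomposition of Weil group representations. Finally, the degenerate case $n=1$ should be handled directly: there $\pi_i$ is a quasi-character of $F^\times$, and $\gamma(s,\pi_i,\psi_F)$ is a monomial in $q_F^{-s}$ exactly when $\pi_i$ is ramified, of degree the conductor exponent $a(\pi_i)$, from which $d(\pi_i)=\max\{0,a(\pi_i)-1\}$ is recovered; so equality of the two gamma factors forces either $a(\pi_1)=a(\pi_2)$ or both $\pi_i$ unramified, and in either case $d(\pi_1)=d(\pi_2)$.
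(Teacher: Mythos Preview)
Your argument is correct. Note, however, that the paper does not supply its own proof of this proposition: it is simply recalled verbatim from \cite[Proposition~2.1]{Xu} as a known result, so there is no in-paper argument to compare against. What you have written is essentially the standard proof (and presumably the one in \cite{Xu}): extract the exponent of $q_F^{-s}$ from the monomial $\gamma(s,\pi_i,\psi_F)$, identify it with $a(\pi_i)-n$ via Theorem~\ref{TheoremConductor} and the normalization $f(\pi\times\mathbf{1},\psi_F)=f(\pi\times\mathbf{1})-n$, and then invoke the conductor--depth relation $a(\pi)=n\bigl(1+d(\pi)\bigr)$ for supercuspidals, which is exactly the content of the Bushnell--Henniart--Kutzko formula in the degenerate case $\tau=\mathbf{1}$. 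Your alternative Galois-side derivation via depth preservation under local Langlands and the single-break property of irreducible Weil group representations is also valid and gives the same identity; it is a slightly heavier hammer but conceptually clean. The separate treatment of $n=1$ is a nice touch, though for the purposes of this paper only $n\ge 2$ is ever used.
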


For a supercuspidal representation $\pi$ of depth $d$, its central character $\omega$ is trivial on $U^{\lceil d \rceil}(\mathcal{O}_{F}) = 1 + \mathcal{P}_{F}^{\lceil d \rceil}$ \cite{BK}. Choosing a set of generators $\{r_{i}\}$ of $\mathcal{O}_{F}^{\times}/U^{\lceil d \rceil}(\mathcal{O}_{F})$, we define the finite set $C_{d} := \left\{r_{i}\varpi_{F}^{-\lceil 2d + 1 \rceil}, \varpi_{F}^{-\lceil 2d + 2 \rceil}\right\}$. For each element $r_{i}\varpi_{F}^{-\lceil 2d + 1 \rceil} \in C_{d}$, we choose a quasi-character $\chi_{r_{i}\varpi_{F}^{-\lceil 2d + 1 \rceil}}$ of $F^{\times}$ with conductor $m = \lceil 2d + 1 \rceil > 2d$ such that
\[
\chi_{r_{i}\varpi_{F}^{-\lceil 2d + 1 \rceil}}(1+x) = \psi_{F}\left(r_{i}\varpi_{F}^{-\lceil 2d + 1 \rceil}x\right)
\]
for all $x \in \mathcal{P}_{F}^{\lfloor \frac{m}{2} \rfloor+1}$. Similarly for $\varpi_{F}^{-\lceil 2d + 2 \rceil} \in C_{d}$, we choose a quasi-character $\chi_{\varpi_{F}^{-\lceil 2d + 2 \rceil}}$ of $F^{\times}$ with conductor $m' = \lceil 2d + 2 \rceil > 2d$ such that $\chi_{\varpi_{F}^{-\lceil 2d + 2 \rceil}}(1+x) = \psi_{F}\left(\varpi_{F}^{-\lceil 2d + 2 \rceil}x\right)$
for all $x \in \mathcal{P}_{F}^{\lfloor \frac{m'}{2} \rfloor+1}$.

Hence, we define 
\begin{equation}\label{Xi}
    \Xi_{d} := \left\{\chi_{r_{i}\varpi_{F}^{-\lceil 2d + 1 \rceil}}, \chi_{\varpi_{F}^{-\lceil 2d + 2 \rceil}}, \mathbbm{1}_{F}\right\}
\end{equation}
where $\mathbbm{1}_{F}$ denotes the trivial quasi-character of $F^{\times}$. This enables us to prove Proposition \ref{PropositionCentral}.
\begin{proof}[Proof of Proposition \ref{PropositionCentral}]
     Since the standard gamma factors of $\pi_{1}$ and $\pi_{2}$ agree, they share the same depth from Proposition \ref{Xu}. From the discussion above, we have that the central character $\omega_{i}$ is trivial on $1 + \mathcal{P}_{F}^{\lceil d \rceil}$. The proof of the proposition now follows from Proposition \ref{Jiang} as equality of the twisted gamma factors given in the hypothesis implies that $\omega_{1} = \omega_{2}$ on the set of generators $\{r_{i}\}$ and $\varpi_{F}$.
\end{proof}

\section{Middle supercuspidal representations}\label{SectionDistinguishing}

In this section, we construct depth $\frac{1}{N}$ minimax supercuspidal representations of the group $\GL(2N, F)$, called \textit{middle supercuspidal representations}, and use their twisted gamma factors to distinguish them from each other. Specifically, we will twist by tamely ramified quasi-characters of $F^{\times}$ and simple supercuspidal representations of $\GL(N, F)$. 

\subsection{Constructing middle supercuspidal representations}\label{Middle}

In this subsection, we construct depth $\frac{1}{N}$ minimax supercuspidal representations $\pi$ of $\GL(2N,  F)$ ($N \geq 2$). Let $f = X^{2}-dX-c$ with $c$,  $d \in \mathcal{O}_{F}$ such that $f\mod{\mathcal{P}_{F}}$ is irreducible over $k_{F}$. Furthermore, let $s = (s_{i, j}) \in S_{2N}$ be the permutation matrix corresponding to the permutation 
\begin{equation*}
    \begin{pmatrix}
        1 & 2 & \cdots & N & | & N+1 & N+2 & \cdots 2N \\
        1 & 3 & \cdots & 2N-1 & | & 2 & 4 & \cdots 2N
    \end{pmatrix}
\end{equation*}
and $t_{f} = \left(t_{i, j}\right) \in \GL(2N,  F)$ be the diagonal matrix defined by: $t_{i, i} = 1$ for $1 \leq i \leq N$ and $t_{i, i} = c^{-1}\varpi_{F}$ for $N+1 \leq i \leq 2N$. From $s$ and $t_{f}$, we set $g_{f} := t_{f}s^{-1} \in \GL(2N,  F)$. Let $\mathfrak{A}_{2N}$ be the $g_{f}$-conjugation of the standard hereditary $\mathcal{O}_{F}$-order $\mathfrak{A}_{2}$ in $ A = \Mat(2N \times 2N, F)$ corresponding to the partition of $2N$ using only 2's:
\begin{equation}\label{Order}
    \mathfrak{A}_{2N} = \begin{pmatrix}
        \mathfrak{I}_{N} & \varpi_{F}^{-1}\mathfrak{I}_{N}  \\
        \varpi_{F}\mathfrak{I}_{N} & \mathfrak{I}_{N}  
    \end{pmatrix}
\end{equation}
where $\mathfrak{I}_{N}$ is the minimal standard hereditary $\mathcal{O}_{F}$-order in $\Mat(N \times N, F)$ with Jacobson radical $\mathfrak{P}_{\mathfrak{I}_{N}}$.

\begin{comment}
and $\mathfrak{O}_{N}$ and $\mathfrak{M}_{N}$ are the following $N \times N$ matrix subspaces of $\Mat(N \times N, F)$ given by
\begin{equation*}
    \mathfrak{O}_{N} = \begin{pmatrix}
        \mathcal{P}_{F}^{-1} & \mathcal{P}_{F}^{-1} & \cdots & \cdots & \mathcal{P}_{F}^{-1} \\
        \mathcal{O}_{F} & \mathcal{P}_{F}^{-1} & \cdots & \cdots & \mathcal{P}_{F}^{-1} \\
        \mathcal{O}_{F} & \mathcal{O}_{F} & \ddots &  & \vdots \\
        \vdots & \vdots & \ddots &  \ddots & \vdots \\
        \mathcal{O}_{F} & \mathcal{O}_{F} & \cdots & \mathcal{O}_{F} & \mathcal{P}_{F}^{-1}
    \end{pmatrix}, \; \; 
    \mathfrak{M}_{N} = \begin{pmatrix}
        \mathcal{P}_{F} & \mathcal{P}_{F} & \cdots & \cdots & \mathcal{P}_{F} \\
        \mathcal{P}_{F}^{2} & \mathcal{P}_{F} & \cdots & \cdots & \mathcal{P}_{F} \\
        \mathcal{P}_{F}^{2} & \mathcal{P}_{F}^{2} & \ddots &  & \vdots \\
        \vdots & \vdots & \ddots &  \ddots & \vdots \\
        \mathcal{P}_{F}^{2} & \mathcal{P}_{F}^{2} & \cdots & \mathcal{P}_{F}^{2} & \mathcal{P}_{F}
    \end{pmatrix}.
\end{equation*}
\end{comment}

Next, let 
\begin{equation}
    \beta_{f} := g_{f}\begin{pmatrix}
 &  &  & & &  \varpi_{F}^{-1} \\
& & & &  c\varpi_{F}^{-1} & d\varpi_{F}^{-1} \\
    1 & & & & & \\
    &  1 & & & & \\
    & & \ddots & & &\\
    & & & 1 &  &
\end{pmatrix}g_{f}^{-1} 
= \begin{pmatrix}
    & & & & & c\varpi_{F}^{-2} \\
    1 & & & & &\\
    & \ddots & & & &  \\
    & & 1 & & & d\varpi_{F}^{-1}\\
    &  & & \ddots  & & \\
    & & & & 1 &
\end{pmatrix}
\end{equation}
where $d\varpi_{F}^{-1} = \left(\beta_{f}\right)_{N+1,  2N}$. 

The element $\beta_{f}$ gives us a degree $2N$ extension $E_{f} = F\left[\beta_{f}\right]$ of $F$ with inertial degree $f(E_{f}|F) = 2$, ramification index $e(E_{f}|F) = N$, and $\nu_{E_{f}}\left(\beta_{f}\right) = -1$ where $\nu_{E_{f}}$ is the normalized discrete valuation on $E_{f}$. Furthermore, we let $\sigma_{f} := \beta_{f}^{N}\varpi_{F}$ be a zero of $X^{2}-dX-c$ and $L_{f} := F\left[\sigma_{f}\right]$ the unique degree two unramified extension of $F$. It is not hard to see that $\left[\mathfrak{A}_{2N}, 1, 0, \beta_{f}\right]$ is a simple stratum with $\beta_{f}$ minimal over $F$ \cite[Chapter 1]{BK}. This implies that the supercuspidal representations coming from our simple stratum will be minimax in the sense of \cite{Adrian2}. 
 
Associated with our simple stratum $\left[\mathfrak{A}_{2N}, 1, 0, \beta_{f}\right]$ is the open compact subgroup $J_{f} := J\left(\beta_{f}, \mathfrak{A}_{2N}\right)$ with open normal subgroups $J^{1}:= J^{1}\left(\beta_{f}, \mathfrak{A}_{2N}\right)$ and $H^{1}:= H^{1}\left(\beta_{f}, \mathfrak{A}_{2N}\right)$. In our situation, we have that
\[
J^{1} = H^{1} = U^{1}:= U^{1}\left(\mathfrak{A}_{2N}\right) = I_{2N} + \mathfrak{P}_{2N}
\]
where $\mathfrak{P}_{2N}$ is the Jacobson radical of $\mathfrak{A}_{2N}$:
\begin{equation}\label{Jacobson}
    \mathfrak{P}_{2N} = \begin{pmatrix}
        \mathfrak{P}_{\mathfrak{I}_{N}} & \varpi_{F}^{-1}\mathfrak{P}_{\mathfrak{I}_{N}}  \\
        \varpi_{F}\mathfrak{P}_{\mathfrak{I}_{N}} & \mathfrak{P}_{\mathfrak{I}_{N}}  
    \end{pmatrix}.
\end{equation}

\begin{comment}
    with $\mathfrak{O}_{N}'$ and $\mathfrak{M}_{N}'$ being the following $N \times N$ matrix subspaces of $\Mat(N \times N, F)$ given by
\begin{equation*}
    \mathfrak{O}_{N}' = \begin{pmatrix}
        \mathcal{O}_{F} & \mathcal{P}_{F}^{-1} & \cdots & \cdots & \mathcal{P}_{F}^{-1} \\
        \mathcal{O}_{F} & \mathcal{O}_{F} & \ddots &  & \mathcal{P}_{F}^{-1} \\
        \vdots & \vdots & \ddots & \ddots  & \vdots \\
        \vdots & \vdots &  &  \ddots & \mathcal{P}_{F}^{-1} \\
        \mathcal{O}_{F} & \mathcal{O}_{F} & \cdots & \mathcal{O}_{F} & \mathcal{O}_{F}
    \end{pmatrix}, \; \; 
    \mathfrak{M}_{N}' = \begin{pmatrix}
        \mathcal{P}_{F}^{2} & \mathcal{P}_{F} & \cdots & \cdots & \mathcal{P}_{F} \\
        \mathcal{P}_{F}^{2} & \mathcal{P}_{F}^{2} & \ddots &  & \mathcal{P}_{F} \\
        \vdots & \vdots & \ddots & \ddots  & \vdots \\
        \vdots & \vdots &  &  \ddots & \mathcal{P}_{F} \\
        \mathcal{P}_{F}^{2} & \mathcal{P}_{F}^{2} & \cdots & \mathcal{P}_{F}^{2} & \mathcal{P}_{F}^{2}
    \end{pmatrix}.
\end{equation*}
\end{comment}
\noindent Furthermore, $J_{f} = \mathcal{O}_{L_{f}}^{\times} \, U^{1}$ where if we fix the ordered $\mathcal{O}_{F}$-basis $\{c,   \sigma_{f}\}$ of $\mathcal{O}_{L_{f}}$, a general element $a_{0}c+a_{1}\sigma_{f} \in \mathcal{O}_{L_{f}}^{\times}$ is of the form
\begin{equation}\label{decomposition}
\begin{pmatrix}
        a_{0}c & & & & a_{1}c\varpi_{F}^{-1} & & &\\
         & a_{0}c & & & & a_{1}c\varpi_{F}^{-1} & & \\
         & & \ddots & & & & \ddots & \\
         &  & & a_{0}c & & & & a_{1}c\varpi_{F}^{-1} \\
        a_{1}\varpi_{F} & & & & a_{0}c+a_{1}d & & & \\
          & a_{1}\varpi_{F} & & &  & a_{0}c+a_{1}d & & \\
          & & \ddots & &  & & \ddots & \\
         & & &  a_{1}\varpi_{F} & & & &a_{0}c+a_{1}d 
    \end{pmatrix}
\end{equation}
\noindent with $a_{i} \in \mathcal{O}_{F}$. 
 
Next, we note that the set of simple characters $\mathcal{C}\left(\mathfrak{A}_{2N}, \beta_{f}, \psi_{F}\right)$ of $H^{1}$ is the singleton $\{\psi_{\beta_{f}}\}$ where
\[
\psi_{\beta_{f}}(x) = \psi_{F} \circ \text{tr}_{E_{f}/F}(\beta_{f}(x-1))
\]
for $x \in U^{1}$. From this simple character, we may define a $\beta_{f}$-\textit{extension} $\kappa_{(f, \, \chi)}$ \cite[Chapter 5]{BK}. In our situation, there exist $q_{F}^{2}-1$ distinct $\beta_{f}$-extensions which are quasi-characters of $J_{f}$ given by
\[
   \kappa_{(f, \, \chi)}(xy) :=  \chi(x)\psi_{\beta_{f}}(y)
\]
for $x \in \mathcal{O}_{L_{f}}^{\times}$ and $y \in U^{1}$ where $\chi$ is a quasi-character of $\mathcal{O}_{L_{f}}^{\times}$ that is trivial on $1 + \mathcal{P}_{L_{f}}$, i.e. $\chi$ is the inflation of a quasi-character of $k_{L_{f}}^{\times}$ \cite[Theorem 5.2.2]{BK}. It follows trivially that $\kappa_{(f, \, \chi)}$ agrees with the conditions of $\beta_{f}$-extensions given in \cite[Chapter 5]{BK}. 

We may now introduce simple types $\left(J_{f},  \lambda_{(f, \, \chi)}\right)$ associated with our simple stratum. In Bushnell--Kutzko notation, we remark that $\lambda_{(f, \, \chi)} = \kappa_{(f, \, \chi)} \otimes \sigma$ may be taken to be another $\beta_{f}$-extension $\kappa_{(f, \, \chi')}$ as $\sigma$ is a cuspidal representation of $J_{f}/J^{1} \cong k_{L_{f}}^{\times}$ which may be viewed as a quasi-character of $\mathcal{O}_{L_{f}}^{\times}$ that is trivial on $1 + \mathcal{P}_{L_{f}}$. This implies that for a given $\beta_{f}$-extension $\kappa_{(f, \, \chi)}$, the tensor product $\kappa_{(f, \, \chi)} \otimes \sigma$ is another $\beta_{f}$-extension. Moreover, we note that such a simple type is maximal as the period $e\left(\mathfrak{A}_{2N}\right)$ of $\mathfrak{A}_{2N}$ is equal to $e(E_{f}|F) = N$. 

To get an extended maximal simple type $\left(\textbf{J}_{f},  \Lambda_{(f,  \, \chi,  \, \zeta)}\right)$ from a maximal simple type defined above, we set $\Lambda_{(f,  \, \chi,  \, \zeta)} = \zeta \in \mathbb{C}^{\times}$ so that $\Lambda_{(f,  \, \chi,  \, \zeta)}(\varpi_{F}) = \zeta^{-N}\chi\left(\sigma_{f}\right)$. We are now ready to introduce the notion of a \textit{middle supercuspidal representation}.

\begin{definition}\label{MiddleSupercuspidal}
    \normalfont
    We say $\pi_{(f,  \, \chi,  \, \zeta)}$ is a \textit{middle supercuspidal representation} of $\GL(2N,  F)$ if 
    \[
    \pi_{(f,  \, \chi,  \, \zeta)} = c\text{-}\Ind_{\textbf{J}_{f}}^{\GL(2N,  F)}\Lambda_{(f,  \, \chi,  \, \zeta)}
    \]
    where $\left(\textbf{J}_{f},  \Lambda_{(f,  \, \chi,  \, \zeta)}\right)$ is an extended maximal simple type of $\left(J_{f},  \kappa_{(f, \, \chi)}\right)$. Let $\omega_{(f,  \, \chi,  \, \zeta)}$ denote the central character of $\pi_{(f,  \, \chi,  \, \zeta)}$.
\end{definition}

We may further classify middle supercuspidals via the following proposition which gives a parametrization of the set $\mathcal{A}_{\text{middle}}$ of isomorphism classes of middle supercuspidal representations of $\GL(2N,  F)$. 
\begin{proposition}\label{Bijection}
    There exists a bijection between $\mathcal{A}_{\text{middle}}$ and the set of triples $(\bar{f},  \, \chi,  \, \zeta)$ with $\bar{f}$ a monic irreducible degree two polynomial over $k_{F}$, $\chi$ a multiplicative quasi-character of $k_{\bar{f}} = k_{F}[X]/\left\langle \bar{f}(X) \right\rangle$, and $\zeta \in \mathbb{C}^{\times}$.
\end{proposition}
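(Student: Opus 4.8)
The plan is to construct an explicit map $\Phi$ from the set of triples $(\bar f,\chi,\zeta)$ to $\mathcal{A}_{\text{middle}}$ and to check that it is well defined, surjective and injective. Given $(\bar f,\chi,\zeta)$, choose any monic lift $f = X^{2}-dX-c \in \mathcal{O}_{F}[X]$ of $\bar f$. Since $\bar f$ is irreducible over $k_{F}$, $f$ is irreducible over $F$ (a monic factorization over $\mathcal{O}_{F}$ would reduce to one of $\bar f$), and $L_{f} = F[\sigma_{f}]$ is the unramified quadratic extension of $F$, with residue field $k_{L_{f}} = k_{\bar f}$; thus $\chi$ inflates to a quasi-character of $\mathcal{O}_{L_{f}}^{\times}$ trivial on $1 + \mathcal{P}_{L_{f}}$, and carrying out the construction of Subsection~\ref{Middle} with these data produces $\pi_{(f,\chi,\zeta)}$. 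Put $\Phi(\bar f,\chi,\zeta) = [\pi_{(f,\chi,\zeta)}]$. Surjectivity is then immediate from Definition~\ref{MiddleSupercuspidal}, since every middle supercuspidal is of the form $\pi_{(f,\chi,\zeta)}$ and $f$ reduces to an irreducible $\bar f$.

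The first substantive step is well-definedness: two lifts $f_{1}, f_{2}$ of the same $\bar f$ should give $\pi_{(f_{1},\chi,\zeta)} \cong \pi_{(f_{2},\chi,\zeta)}$, and in fact the identical extended maximal simple type. The only entries of $\beta_{f}$ depending on $f$ are $(\beta_{f})_{1,2N} = c\varpi_{F}^{-2}$ and $(\beta_{f})_{N+1,2N} = d\varpi_{F}^{-1}$, and comparing with (\ref{Jacobson}) one checks directly that $\beta_{f_{1}} - \beta_{f_{2}} \in \mathfrak{P}_{2N}$. Hence the strata $[\mathfrak{A}_{2N},1,0,\beta_{f_{1}}]$ and $[\mathfrak{A}_{2N},1,0,\beta_{f_{2}}]$ are equivalent, the groups $H^{1} = J^{1} = U^{1}$ are unchanged, and $\psi_{\beta_{f_{1}}} = \psi_{\beta_{f_{2}}}$ on $U^{1}$ (the difference is supported off the diagonal, so its trace against $U^{1}-1 \subset \mathfrak{P}_{2N}$ lies in $\mathcal{P}_{F}$, where $\psi_{F}$ is trivial). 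Reducing (\ref{decomposition}) modulo $\mathfrak{P}_{2N}$ shows that the image of $\mathcal{O}_{L_{f_{i}}}^{\times}$ in $U(\mathfrak{A}_{2N})/U^{1}$, together with the image of $\sigma_{f_{i}}$, depends only on $\bar f$ (one recovers $N$ copies of the companion matrix of $\bar f$); consequently $J_{f_{1}} = J_{f_{2}}$ and, under the resulting identification $J_{f}/U^{1} \cong k_{\bar f}^{\times}$, the two inflations of $\chi$ agree, so $\kappa_{(f_{1},\chi)} = \kappa_{(f_{2},\chi)}$. Finally $\beta_{f_{1}} \in \beta_{f_{2}}\bigl(1 + \beta_{f_{2}}^{-1}(\beta_{f_{1}}-\beta_{f_{2}})\bigr) \subseteq \beta_{f_{2}}U^{1}$, which forces $\mathbf{J}_{f_{1}} = \mathbf{J}_{f_{2}}$ and $\Lambda_{(f_{2},\chi,\zeta)}(\beta_{f_{1}}) = \zeta\,\psi_{\beta_{f_{2}}}\bigl(1 + \beta_{f_{2}}^{-1}(\beta_{f_{1}}-\beta_{f_{2}})\bigr) = \zeta$, whence $\Lambda_{(f_{1},\chi,\zeta)} = \Lambda_{(f_{2},\chi,\zeta)}$ and the induced representations coincide.

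For injectivity, suppose $\pi_{(\bar f_{1},\chi_{1},\zeta_{1})} \cong \pi_{(\bar f_{2},\chi_{2},\zeta_{2})} =: \pi$. With $e(\mathfrak{A}_{2N}) = N$, $m = 1$ and $g = \gcd(N,1) = 1$, we have $Y_{\beta_{f}} = \beta_{f}^{N}\varpi_{F} = \sigma_{f}$; since $\sigma_{f}$ acts on $F^{2N} \cong L_{f}^{N}$ by scalar multiplication, its characteristic polynomial is $(X^{2}-dX-c)^{N}$, so the minimal polynomial of the stratum is $\phi_{\pi} = \bar f$, and as $\phi_{\pi}$ is an invariant of $\pi$ we obtain $\bar f_{1} = \bar f_{2} =: \bar f$. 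Fixing one lift $f$ of $\bar f$, both representations are compactly induced from extended maximal simple types on the same $\mathbf{J}_{f}$, so by the uniqueness of extended maximal simple types there is $g \in \GL(2N,F)$ normalizing $\mathbf{J}_{f}$ with $\Lambda_{(f,\chi_{1},\zeta_{1})}^{g} = \Lambda_{(f,\chi_{2},\zeta_{2})}$. Such a $g$ normalizes the unique maximal compact subgroup $J_{f}$ of $\mathbf{J}_{f}$ and its unique maximal normal pro-$p$ subgroup $U^{1}$, hence $\mathfrak{P}_{2N}$ and $\mathfrak{A}_{2N}$, so $g \in \mathfrak{K}(\mathfrak{A}_{2N}) = \langle \beta_{f}\rangle\, U(\mathfrak{A}_{2N})$; moreover $g$ fixes $\psi_{\beta_{f}}$, the common restriction of the two $\Lambda$'s to $U^{1}$. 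Since the $U(\mathfrak{A}_{2N})$-stabilizer of $\psi_{\beta_{f}}$ equals $J_{f}$ (this is where maximality of the stratum enters) and $\beta_{f} \in \mathbf{J}_{f}$, we conclude $g \in \mathbf{J}_{f}$; as the $\Lambda$'s are one-dimensional this yields $\Lambda_{(f,\chi_{1},\zeta_{1})} = \Lambda_{(f,\chi_{2},\zeta_{2})}$, and restricting to $J_{f}$ and evaluating at $\beta_{f}$ gives $\chi_{1} = \chi_{2}$ and $\zeta_{1} = \zeta_{2}$.

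I expect the two bookkeeping points to be the real work: the explicit verification that changing the lift $f$ leaves the extended maximal simple type unchanged (essentially the congruence $\beta_{f_{1}} \equiv \beta_{f_{2}} \pmod{\mathfrak{P}_{2N}}$ together with the fact that the relevant quotients of $\mathcal{O}_{L_{f}}^{\times}$ see only $\bar f$), and the equality $N_{\GL(2N,F)}(\mathbf{J}_{f}) = \mathbf{J}_{f}$ invoked for injectivity, which rests on the Bushnell--Kutzko computation of the $U(\mathfrak{A}_{2N})$-stabilizer of the simple character $\psi_{\beta_{f}}$.
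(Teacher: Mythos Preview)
Your proof is correct and follows essentially the same approach as the paper. The only cosmetic differences are that you phrase the congruence additively ($\beta_{f_{1}}-\beta_{f_{2}}\in\mathfrak{P}_{2N}$) where the paper phrases it multiplicatively ($\beta_{f_{1}}\beta_{f_{2}}^{-1}\in U^{1}$), and for injectivity you unpack the normalizer computation by hand (through $\mathfrak{K}(\mathfrak{A}_{2N})$ and the $U(\mathfrak{A}_{2N})$-stabilizer of $\psi_{\beta_f}$) where the paper simply cites \cite[Corollary~3.3.17]{BK} to identify $N_{G}(\psi_{\beta_{f}})$ with $\mathbf{J}_{f}$.
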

\begin{proof}
    We first prove that if $f_{1}, f_{2} \in \mathcal{O}_{F}[X]$ are two monic degree two polynomials such that $\bar{f}_{i} = f_{i} \mod{\mathcal{P}_{F}}$ is irreducible over $k_{F}$ and $\bar{f} = \bar{f}_{1} = \bar{f}_{2}$, then they give the same middle supercuspidal representations. Since $\beta_{f_{1}}\beta_{f_{2}}^{-1} \in U^{1}$, we have that $J_{f_{1}} = J_{f_{2}}$ and $\textbf{J}_{f_{1}} = \textbf{J}_{f_{2}}$. Furthermore, a direct computation shows that $\psi_{\beta_{f_{1}}} = \psi_{\beta_{f_{2}}}$ as quasi-characters of $U^{1}$ and that $\beta_{f_{1}}\beta_{f_{2}}^{-1} \in \text{Ker } \psi_{\beta_{f_{i}}}$. 
    
    Since $k_{L_{f_{i}}} \cong k_{\bar{f}}$ under the isomorphism $\sigma_{f_{i}} \mapsto X$, we may interpret a quasi-character of $k_{k_{\bar{f}}}^{\times}$ as a quasi-character of $k_{L_{f_{i}}}^{\times}$. This implies $\kappa_{(f_{1}, \, \chi)} = \kappa_{(f_{2}, \, \chi)}$. Furthermore, since $\beta_{f_{1}}\beta_{f_{2}}^{-1} \in \text{Ker } \psi_{\beta_{f_{i}}}$, we have that $\Lambda_{(f_{1}, \, \chi, \, \zeta)} = \Lambda_{(f_{2}, \, \chi, \, \zeta)}$ and $\pi_{(f_{1}, \, \chi, \, \zeta)} = \pi_{(f_{2}, \, \chi, \, \zeta)}$.
    
    The above gives us a well-defined surjective map $(\bar{f},  \, \chi,  \, \zeta) \mapsto \pi_{(f,  \, \chi,  \, \zeta)}$ from the set of triples defined in the hypothesis and $\mathcal{A}_{\text{middle}}$. To prove injectivity, suppose $\pi_{(f_{1},  \, \chi_{1},  \, \zeta_{1})} \cong \pi_{(f_{2},  \, \chi_{2},  \, \zeta_{2})}$. We claim $(\bar{f}_{1},  \, \chi_{1},  \, \zeta_{1}) = (\bar{f}_{2},  \, \chi_{2},  \, \zeta_{2})$.
    
    From \cite[(2.3.2)]{BK}, we have that the simple stratum $\left[\mathfrak{A}_{2N}, 1, 0, \beta_{f_{i}}\right]$ associated with $\pi_{(f_{i},  \, \chi_{i},  \, \zeta_{i})}$ is fundamental. Hence, we may set 
    \[
    Y_{\beta_{f_{i}}} = \beta_{f_{i}}^{N}\varpi_{F} \in A.
    \]
    Reducing the characteristic polynomial $\Phi_{Y_{\beta_{f_{i}}}}(X)$ of $Y_{\beta_{f_{i}}}$ modulo $\mathcal{P}_{F}$, we have 
    \[
    \phi_{\pi_{(f_{i},  \, \chi_{i},  \, \zeta_{i})}}(X) = \bar{f}_{i}.
    \]
    This implies $\bar{f} = \bar{f}_{1} = \bar{f}_{2}$ because the minimal polynomial is an invariant of $\pi_{(f_{i},  \, \chi_{i},  \, \zeta_{i})}$ \cite[Subsection 6.1]{BHK}. Next, we fix a lift $f \in \mathcal{O}_{F}[X]$ of $\bar{f}$.
    
    From \cite[(6.2.4)]{BK}, we have that there exists $g \in \GL(2N,  F)$ such that
    \[
    \Lambda_{(f,  \, \chi_{1},  \, \zeta_{1})}^{g} = \Lambda_{(f,  \, \chi_{2},  \, \zeta_{2})};
    \]
    this implies $\leftidx^{g}\psi_{\beta_{f}} = \psi_{\beta_{f}}$. Hence, \cite[Corollary 3.3.17]{BK} tells us that $g$ is contained inside the normalizer $N_{G}(\psi_{\beta_{f}}) = \textbf{J}_{f}$ of $\psi_{\beta_{f}}$. Therefore, $\Lambda_{(f, \, \chi_{1},  \, \zeta_{1})} = \Lambda_{(f, \, \chi_{2}, \, \zeta_{2})}$ which means $\zeta_{1} = \zeta_{2}$ and $\chi_{1} = \chi_{2}$, thus completing the proof for injectivity.
\end{proof}

\subsection{Explicit Whittaker functions for middle supercuspidal representations}

In this subsection, we compute the explicit Whittaker function $\mathcal{W}_{\left(f, \, \chi, \, \zeta\right)} \in W \left(\pi_{\left(f, \, \chi, \, \zeta\right)}, \psi_{F}\right)$ defined in subsection \ref{ExplicitWhittakerfunctions} of a middle supercuspidal representation $\pi_{\left(f, \, \chi, \, \zeta\right)}$ of $\GL(2N, F)$. 
We first note that 
\[
(J_{f} \cap \N(2N, F))  U^{1} = U^{1}
\]
as $\mathcal{O}_{L_{f}}^{\times} \cap \N(2N, F) = \{I_{2N}\}$. This implies $\Psi_{2N} = \psi_{\beta_{f}}$. 

Next, we compute the associated Bessel function $\mathcal{J}_{\left(f, \, \chi, \, \zeta\right)}$. Let $\mathcal{N} = U^{2} := U^{2}\left(\mathfrak{A}_{2N}\right)$. Then for $g \in \textbf{J}_{f}$, 
\begin{align*}
    \mathcal{J}_{\left(f, \, \chi, \, \zeta\right)}(g) &= \left(U^{1} : U^{2}  \right)^{-1} \displaystyle\sum_{h \in U^{1} \big/U^{2} } \notag \psi_{\beta_{f}}\left(h^{-1}\right)\Lambda_{\left(f, \, \chi, \, \zeta\right)}(gh)\\
    &= \Lambda_{\left(f, \, \chi, \, \zeta\right)}(g). 
\end{align*}

Hence, we may define $\mathcal{W}_{\left(f, \, \chi, \, \zeta\right)} \, \colon \, \GL(2N, F) \to \mathbb{C}$ as 

\hspace{0pt}\resizebox{1.0\linewidth}{!}{
  \begin{minipage}{\linewidth}
\begin{align*}
    g \mapsto  \begin{cases} 
      \hfill \psi_{2N}(u)  \Lambda_{\left(f, \, \chi, \, \zeta\right)}(h) &, \text{ if } g = uh \in \N(2N, F)  \textbf{J}_{f} \text{ with } u \in \N(2N, F), \; h \in \textbf{J}_{f} \\
      \hfill 0 & ,  \text{ otherwise}
   \end{cases}.
\end{align*}
\end{minipage}
}

\subsection{Twisting by quasi-characters of $F^{\times}$}

In this subsection, we choose a finite set $\Xi_{\text{middle}}$ of $q_{F}+1$ quasi-characters of $F^{\times}$ as in subsection \ref{CentralCharacter}. By doing so, we may distinguish between the central characters of two middle supercuspidal representations via their twisted gamma factors with elements of $\Xi_{\text{middle}}$. Next, we compute the twisted gamma factor $\gamma\left(s, \pi_{(f,  \, \chi,  \, \zeta)} \times \lambda, \psi_{F}\right)$ where $\lambda$ is a tamely ramified quasi-character of $F^{\times}$, i.e. having level one. 

For $\pi_{(f,  \, \chi,  \, \zeta)}$ a middle supercuspidal, one may choose any $m_{\pi_{(f,  \, \chi,  \, \zeta)}} > \frac{2}{N}$ as in Proposition \ref{Jiang} since the depth of a middle supercuspidal representation of $\GL(2N, F)$ is $\frac{1}{N}$ \cite{Harris}, \cite[Lemma 2.2]{Xu}. Hence, we choose $m_{\pi_{(f,  \, \chi,  \, \zeta)}} = 2$. Let $\{\mu_{i}\}_{i=1}^{q_{F}-1}$ denote the set of $(q_{F}-1)$ roots of unity in $F$ and consider the finite set $C_{\text{middle}} := \left\{\mu_{i}\varpi_{F}^{-2},  \varpi_{F}^{-3}\right\}_{i=1}^{q_{F}-1}$ of cardinality $q_{F}$. 

For every $\mu_{i}\varpi_{F}^{-2} \in C_{\text{middle}}$, we choose a quasi-character $\chi_{\mu_{i}\varpi_{F}^{-2}}$ of $F^{\times}$  with conductor two such that
\[
\chi_{\mu_{i}\varpi_{F}^{-2}}(1+x) = \psi_{F}(\mu_{i}\varpi_{F}^{-2}x) 
\]
for all $x \in \mathcal{P}_{F}^{2}$. Similarly for $\varpi_{F}^{-3} \in C_{\text{middle}}$, we choose a quasi-character $\chi_{\varpi_{F}^{-3}}$ of $F^{\times}$ with conductor three such that $\chi_{\varpi_{F}^{-3}}(1+x) = \psi_{F}(\varpi_{F}^{-3}x)$ for all $x \in \mathcal{P}_{F}^{2}$. Let $\Xi_{\text{middle}}$ denote the finite set:
\begin{equation}\label{Quasi-Characters}
    \Xi_{\text{middle}} := \left\{ \chi_{\mu_{i}\varpi_{F}^{-2}},  \chi_{\varpi_{F}^{-3}}, \mathbbm{1}_{F}\right\}_{i=1}^{q_{F}-1}.
\end{equation}

Hence, for $\pi$ an irreducible supercuspidal representation of $\GL(2N, F)$ with central character $\omega$, Proposition \ref{PropositionCentral} implies:
\begin{corollary}\label{CorollaryCentral}
    If $\gamma\left(s, \pi_{(f,  \, \chi,  \, \zeta)} \times \tau, \psi_{F}\right) = \gamma\left(s, \pi \times \tau, \psi_{F}\right)$ for all $\tau \in \Xi_{\text{middle}}$, then $\omega_{(f,  \, \chi,  \, \zeta)} = \omega$. 
\end{corollary}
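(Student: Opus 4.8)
The plan is to deduce Corollary \ref{CorollaryCentral} as a direct instance of Proposition \ref{PropositionCentral}, so almost all of the work is really identifying the abstract set $\Xi_d$ from subsection \ref{CentralCharacter} with the concrete set $\Xi_{\text{middle}}$ constructed in (\ref{Quasi-Characters}) in the case $d = d\left(\pi_{(f, \, \chi, \, \zeta)}\right) = \tfrac{1}{N}$. First I would recall that, by construction, $\pi_{(f, \, \chi, \, \zeta)}$ is a middle supercuspidal representation of $\GL(2N, F)$ and hence has depth $\tfrac{1}{N}$, as noted via \cite{Harris}, \cite[Lemma 2.2]{Xu}; thus $\lceil d \rceil = 1$, $\lceil 2d+1 \rceil = 2$, and $\lceil 2d+2 \rceil = 3$ since $N \ge 2$. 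Therefore the generic construction $C_d = \{r_i \varpi_F^{-\lceil 2d+1 \rceil}, \varpi_F^{-\lceil 2d+2 \rceil}\}$ specializes exactly to $C_{\text{middle}} = \{\mu_i \varpi_F^{-2}, \varpi_F^{-3}\}_{i=1}^{q_F - 1}$, once we observe that the $(q_F - 1)$ roots of unity $\{\mu_i\}$ form a set of representatives for $\mathcal{O}_F^\times / U^1(\mathcal{O}_F) = \mathcal{O}_F^\times/(1 + \mathcal{P}_F) \cong k_F^\times$, which is precisely the quotient $\mathcal{O}_F^\times / U^{\lceil d \rceil}(\mathcal{O}_F)$ appearing in subsection \ref{CentralCharacter}.

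Next I would check that the quasi-characters chosen in (\ref{Quasi-Characters}) satisfy the defining conditions for membership in $\Xi_d$. The character $\chi_{\mu_i \varpi_F^{-2}}$ has conductor $m = 2 = \lceil 2d+1 \rceil$, and we require $\chi_{\mu_i\varpi_F^{-2}}(1+x) = \psi_F(\mu_i\varpi_F^{-2} x)$ for $x \in \mathcal{P}_F^{\lfloor m/2 \rfloor + 1} = \mathcal{P}_F^2$, which is exactly the condition imposed in the text. Likewise $\chi_{\varpi_F^{-3}}$ has conductor $m' = 3 = \lceil 2d+2 \rceil$ and satisfies $\chi_{\varpi_F^{-3}}(1+x) = \psi_F(\varpi_F^{-3}x)$ on $\mathcal{P}_F^{\lfloor m'/2 \rfloor + 1} = \mathcal{P}_F^2$, again matching. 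Finally $\mathbbm{1}_F \in \Xi_{\text{middle}}$ just as $\mathbbm{1}_F \in \Xi_d$. So $\Xi_{\text{middle}} = \Xi_{1/N}$ (for a suitable choice of the generators $\{r_i\}$, namely $\{\mu_i\}$, and of the auxiliary characters, which is immaterial since the conclusion of Proposition \ref{PropositionCentral} does not depend on these choices).

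With this identification in hand, the corollary is immediate: assume $\gamma\left(s, \pi_{(f, \, \chi, \, \zeta)} \times \tau, \psi_F\right) = \gamma\left(s, \pi \times \tau, \psi_F\right)$ for all $\tau \in \Xi_{\text{middle}} = \Xi_{1/N}$. Since $\pi_{(f, \, \chi, \, \zeta)}$ and $\pi$ are both irreducible supercuspidal representations of $\GL(2N, F)$ with $2N \ge 2$ — here I would briefly note that we may take $\pi_1 = \pi_{(f, \, \chi, \, \zeta)}$, $\pi_2 = \pi$ in Proposition \ref{PropositionCentral}, which requires $d = d(\pi_1)$; this is consistent because we are applying the proposition with $d = \tfrac{1}{N} = d\left(\pi_{(f, \, \chi, \, \zeta)}\right)$ — Proposition \ref{PropositionCentral} yields $\omega_{(f, \, \chi, \, \zeta)} = \omega$, which is the assertion.

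The only genuine point requiring care — and the step I would flag as the \emph{main obstacle}, though it is minor — is the bookkeeping that $d\left(\pi_{(f, \, \chi, \, \zeta)}\right) = \tfrac{1}{N}$ exactly, so that the ceiling functions collapse to the stated integers $1, 2, 3$; and that the hypothesis of Proposition \ref{PropositionCentral} asks only for $d = d(\pi_1)$ with no constraint linking it to $d(\pi_2)$, so that we need not know a priori that $\pi$ has depth $\tfrac{1}{N}$ (that would follow a posteriori from Proposition \ref{Xu} if one additionally knew the standard gamma factors agree, which is contained in the $\tau = \mathbbm{1}_F$ case). Everything else is a matter of matching notation. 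I would therefore write the proof as a short paragraph invoking Proposition \ref{PropositionCentral} after pointing out the equality $\Xi_{\text{middle}} = \Xi_{1/N}$.
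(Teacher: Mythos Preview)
Your proposal is correct and matches the paper's approach exactly: the paper simply states that Proposition \ref{PropositionCentral} implies the corollary, and the preceding paragraphs construct $\Xi_{\text{middle}}$ precisely as the specialization $\Xi_{1/N}$ you describe. Your careful verification of the ceiling computations $\lceil d\rceil=1$, $\lceil 2d+1\rceil=2$, $\lceil 2d+2\rceil=3$ for $d=1/N$ with $N\ge 2$, and of the identification of $\{\mu_i\}$ with generators of $\mathcal{O}_F^\times/U^1(\mathcal{O}_F)$, is exactly the bookkeeping the paper leaves implicit.
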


We now compute $\gamma\left(s, \pi_{(f,  \, \chi,  \, \zeta)} \times \lambda, \psi_{F}\right)$ for $\lambda$ a tamely ramified quasi-character of $F^{\times}$. From Theorem \ref{GammaFactor2.7}, we have
\[
\gamma\left(s, \pi_{(f,  \, \chi,  \, \zeta)} \times \lambda, \psi_{F}\right) = \lambda(-1)\frac{\widetilde{\Psi}\left(1-s; \rho(w_{2N, 1})\widetilde{\mathcal{W}}_{(f,  \, \chi,  \, \zeta)},  \lambda^{-1}\right)}{\Psi\left(s; \mathcal{W}_{(f,  \, \chi,  \, \zeta)},  \lambda\right)}.
\]
We first compute $\Psi\left(s; \mathcal{W}_{(f,  \, \chi,  \, \zeta)},  \lambda\right)$. 
\begin{proposition}\label{Proposition1}
   \normalfont 
   $\Psi\left(s; \mathcal{W}_{(f,  \, \chi,  \, \zeta)},  \lambda\right) = \text{vol}\left(1+\mathcal{P}_{F}\right)$.
\end{proposition}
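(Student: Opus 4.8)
The plan is to compute the Rankin--Selberg integral directly from the explicit Whittaker function $\mathcal{W}_{(f,\chi,\zeta)}$ of the previous subsection together with the support theorem of subsection~\ref{ExplicitWhittakerfunctions}. Since $\lambda$ is a character of $\GL(1,F)$, the group $\N(1,F)$ is trivial and the Whittaker model of $\lambda$ is spanned by $\lambda$ itself, so
\[
\Psi\left(s; \mathcal{W}_{(f,\chi,\zeta)}, \lambda\right)
= \int_{F^{\times}} \mathcal{W}_{(f,\chi,\zeta)}\begin{pmatrix} h & \\ & I_{2N-1}\end{pmatrix}\lambda(h)\,|h|^{\,s-\frac{2N-1}{2}}\,d^{*}h .
\]
The matrix $\mathrm{diag}(h, I_{2N-1})$ lies in the mirabolic subgroup $\text{P}(2N,F)$ for every $h$, so by the support theorem the integrand vanishes unless $\mathrm{diag}(h, I_{2N-1}) \in \N(2N,F)\big(H^{1}\cap\text{P}(2N,F)\big)$; recall that here $H^{1}=U^{1}=I_{2N}+\mathfrak{P}_{2N}$.

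First I would identify this support exactly with $1+\mathcal{P}_{F}$. If $\mathrm{diag}(h, I_{2N-1}) = um$ with $u\in\N(2N,F)$ and $m\in U^{1}\cap\text{P}(2N,F)$, then the first column of $m = u^{-1}\mathrm{diag}(h,I_{2N-1})$ equals $h\,e_{1}$, since the first column of the upper unipotent matrix $u^{-1}$ is $e_{1}$; hence $m_{11}=h$, and as the $(1,1)$-entry of $\mathfrak{P}_{2N}$ lies in $\mathcal{P}_{F}$ (the corner block being $\mathfrak{P}_{\mathfrak{I}_{N}}$, whose diagonal entries are in $\mathcal{P}_{F}$), we get $h\in 1+\mathcal{P}_{F}$. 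Conversely, for $h\in 1+\mathcal{P}_{F}$ one has $\mathrm{diag}(h,I_{2N-1})-I_{2N}=(h-1)E_{11}\in\mathfrak{P}_{2N}$, where $E_{11}$ is the matrix unit with $1$ in position $(1,1)$, so $\mathrm{diag}(h,I_{2N-1})\in U^{1}\cap\text{P}(2N,F)$. Thus the integrand is supported precisely on $1+\mathcal{P}_{F}$, and there (taking $u=I_{2N}$) the support theorem gives $\mathcal{W}_{(f,\chi,\zeta)}(\mathrm{diag}(h,I_{2N-1}))=\psi_{2N}(I_{2N})\,\psi_{\beta_{f}}(\mathrm{diag}(h,I_{2N-1}))=\psi_{\beta_{f}}(\mathrm{diag}(h,I_{2N-1}))$.

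It remains to evaluate $\psi_{\beta_{f}}$ on these elements and finish the integral. We have $\psi_{\beta_{f}}(\mathrm{diag}(h,I_{2N-1}))=\psi_{F}\big(\text{tr}(\beta_{f}(h-1)E_{11})\big)$, and since the first column of $\beta_{f}$ is $e_{2}$, the matrix $\beta_{f}E_{11}$ is the matrix unit $E_{21}$, which has trace $0$; hence $\psi_{\beta_{f}}(\mathrm{diag}(h,I_{2N-1}))=1$. Finally, on $1+\mathcal{P}_{F}$ we have $|h|=1$, and $\lambda(h)=1$ because $\lambda$ is tamely ramified, hence trivial on $1+\mathcal{P}_{F}$; therefore the integral collapses to $\int_{1+\mathcal{P}_{F}}d^{*}h=\text{vol}(1+\mathcal{P}_{F})$, as claimed. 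The only subtlety in this argument is the bookkeeping identifying the support of the integrand with $1+\mathcal{P}_{F}$ via the $(1,1)$-entry; the rest consists of short direct computations.
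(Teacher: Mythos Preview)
Your proof is correct and follows essentially the same approach as the paper: both reduce the integral to $1+\mathcal{P}_{F}$ via the support theorem for $\mathcal{W}_{(f,\chi,\zeta)}$ on the mirabolic, then use that $\lambda$ is trivial on $1+\mathcal{P}_{F}$ and $\psi_{\beta_f}$ is trivial on $\mathrm{diag}(h,I_{2N-1})$. You are simply more explicit than the paper in checking the converse support inclusion and in carrying out the trace computation showing $\psi_{\beta_f}(\mathrm{diag}(h,I_{2N-1}))=1$.
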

\begin{proof}
    By definition, 
    \begin{align*}
        \Psi\left(s; \mathcal{W}_{(f,  \, \chi,  \, \zeta)},  \lambda\right) &= \int\displaylimits_{F^{\times}}\mathcal{W}_{(f,  \, \chi,  \, \zeta)}\left(\begin{pmatrix}
            h & \\
            & I_{2N-1}
        \end{pmatrix}\right)\lambda(h)|h|^{s-\frac{2N-1}{2}}  dh.
    \end{align*}
    From \cite[Theorem 5.8]{Paskunas}, 
    \[
    \Su\left(\mathcal{W}_{(f,  \, \chi,  \, \zeta)}\right) \cap \text{P}(2N, F) = \N(2N, F)\left(U^{1}\cap \text{P}(2N, F)\right)
    \]
    and 
    \begin{equation*}
        \mathcal{W}_{(f,  \, \chi,  \, \zeta)}(um) = \psi_{2N}(u)\psi_{\beta_{f}}(m)
    \end{equation*}
     for all $u \in \N(2N, F)$ and $m \in U^{1}\cap \text{P}(2N, F)$. This implies $\begin{pmatrix}
            h & \\
            & I_{2N-1}
        \end{pmatrix} = um$ for some $u \in \N(2N, F)$ and $m = (m_{i, j}) \in U^{1}\cap \text{P}(2N, F)$.

    Setting $u^{-1}h = m$, we have $h \in 1 + \mathcal{P}_{F}$ from $m_{1, 1}$. Since $\lambda$ is tamely ramified, our integral simplifies to
    \begin{align*}
        \Psi\left(s; \mathcal{W}_{(f,  \, \chi,  \, \zeta)},  \lambda\right) &= \int\displaylimits_{1+\mathcal{P}_{F}}\psi_{\beta_{f}}\left(\begin{pmatrix}
            h & \\
            & I_{2N-1}
        \end{pmatrix}\right) dh \\
        &= \text{vol}\left(1+\mathcal{P}_{F}\right).
    \qedhere \end{align*}
\end{proof}

We now calculate $\widetilde{\Psi}\left(1-s; \rho(w_{2N, 1})\widetilde{\mathcal{W}}_{(f,  \, \chi,  \, \zeta)},  \lambda^{-1}\right)$. Let $e_{i}$ denote the standard column matrix with a one in the $i^{\text{th}}$ row and zeros elsewhere. 
\begin{proposition}\label{Proposition2}
   \normalfont 
   \begin{align*}
       &\widetilde{\Psi}\left(1-s; \rho(w_{2N, 1})\widetilde{\mathcal{W}}_{(f,  \, \chi,  \, \zeta)},  \lambda^{-1}\right) \\
       &=\zeta^{-1}  \lambda\left(-c^{-1}\varpi_{F}^{2}\right)   q_{F}^{1-2s} \, \text{vol}\left(1+\mathcal{P}_{F}\right).
   \end{align*}
\end{proposition}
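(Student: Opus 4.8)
The plan is to compute $\widetilde{\Psi}\big(1-s;\rho(w_{2N,1})\widetilde{\mathcal{W}}_{(f,\chi,\zeta)},\lambda^{-1}\big)$ directly by unfolding the integral against the explicit Whittaker function of the previous subsection; by Theorem \ref{GammaFactor2.7} and Proposition \ref{Proposition1} this is all that remains. Write $g(h,x)$ for the element of $\GL(2N,F)$ with $(1,1)$-entry $h\in F^{\times}$, the block $I_{2N-2}$ in rows and columns $2,\dots,2N-1$, a $1$ in the $(2N,2N)$-entry, and the column $x\in\Mat((2N-2)\times 1,F)$ in rows $2,\dots,2N-1$ of the first column. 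Using $\widetilde{\mathcal{W}}_{(f,\chi,\zeta)}(g)=\mathcal{W}_{(f,\chi,\zeta)}(w_{2N}\,{}^{t}g^{-1})$ and $w_{2N,1}^{-1}={}^{t}w_{2N,1}=w_{2N,1}$, the integrand becomes
\[
\mathcal{W}_{(f,\chi,\zeta)}\!\left(w_{2N}\,{}^{t}\!\big(g(h,x)^{-1}\big)w_{2N,1}\right)\lambda^{-1}(h)\,|h|^{(1-s)-\frac{2N-1}{2}}.
\]
The first, easy, step is to write the matrix $M(h,x):=w_{2N}\,{}^{t}(g(h,x)^{-1})w_{2N,1}$ out explicitly; since $g(h,x)$ is the product of $\mathrm{diag}(h,I_{2N-1})$ with a unipotent matrix supported in the first column, $M(h,x)$ is again of a simple anti-triangular shape with entries linear in $h^{-1}$ and $h^{-1}x$.

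The crux is the second step: determining for which $(h,x)$ one has $M(h,x)\in\N(2N,F)\textbf{J}_{f}$, i.e.\ the support of the integrand. Here I use the explicit descriptions of the pieces of $\textbf{J}_{f}=E_{f}^{\times}J_{f}$, namely the lattice filtration $U^{1}=I_{2N}+\mathfrak{P}_{2N}$ with $\mathfrak{P}_{2N}$ as in (\ref{Jacobson}), the matrix form (\ref{decomposition}) of $\mathcal{O}_{L_{f}}^{\times}$, and the uniformizers $\varpi_{F}$ of $F$ and $\beta_{f}$ of $E_{f}$. Decomposing $M(h,x)=uj$ with $u\in\N(2N,F)$, $j\in\textbf{J}_{f}$ amounts to an Iwasawa/Bruhat-type decomposition relative to the compact-mod-centre group $\textbf{J}_{f}$; matching valuations of matrix entries against the block structure of $\mathfrak{P}_{2N}$ forces $j$ to lie in the coset $\beta_{f}^{-1}U^{1}$ — the class of the element of $E_{f}^{\times}$ of minimal positive $E_{f}$-valuation meeting the relevant cell. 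Taking determinants then pins $h$ down to a single coset $h_{0}(1+\mathcal{P}_{F})$, which the computation identifies as $h_{0}=\det_{F}(\beta_{f})=\mathrm{N}_{E_{f}/F}(\beta_{f})=-c\varpi_{F}^{-2}$ (from the minimal polynomial of $\beta_{f}$); the remaining freedom is that $x$ ranges over an explicit compact lattice $\mathfrak{L}\subset F^{2N-2}$ cut out by the two kinds of off-diagonal blocks ($\mathcal{O}_{F}$ resp.\ $\varpi_{F}^{-1}\mathcal{O}_{F}$) occurring in $\mathfrak{P}_{2N}$, of volume $\text{vol}(\mathfrak{L})=q_{F}^{2N-2}$ for the self-dual Haar measure.

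For the last step I evaluate $\mathcal{W}_{(f,\chi,\zeta)}(M(h,x))=\psi_{2N}(u)\,\Lambda_{(f,\chi,\zeta)}(j)$ on this support. Since $j\in\beta_{f}^{-1}U^{1}$ and the $U^{1}$-part lies in $\ker\psi_{\beta_{f}}$ there, with no nontrivial $\mathcal{O}_{L_{f}}^{\times}/U^{1}$-component (which is why no $\chi$ survives), we get $\Lambda_{(f,\chi,\zeta)}(j)=\Lambda_{(f,\chi,\zeta)}(\beta_{f})^{-1}=\zeta^{-1}$, and $\psi_{2N}(u)=1$ throughout $\mathfrak{L}$ — indeed one should take $\mathfrak{L}$ to be precisely the set on which both $\psi_{2N}$ on the unipotent part and $\psi_{\beta_{f}}$ on the $U^{1}$-part are trivial. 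Then the $x$-integral contributes $\text{vol}(\mathfrak{L})=q_{F}^{2N-2}$ and the $h$-integral over $-c\varpi_{F}^{-2}(1+\mathcal{P}_{F})$ contributes
\[
\lambda^{-1}\!\left(-c\varpi_{F}^{-2}\right)\big|{-c\varpi_{F}^{-2}}\big|^{(1-s)-\frac{2N-1}{2}}\text{vol}(1+\mathcal{P}_{F})=\lambda\!\left(-c^{-1}\varpi_{F}^{2}\right)q_{F}^{\,3-2N-2s}\,\text{vol}(1+\mathcal{P}_{F}),
\]
so that, multiplying with $\zeta^{-1}$, one obtains exactly $\zeta^{-1}\lambda(-c^{-1}\varpi_{F}^{2})\,q_{F}^{1-2s}\,\text{vol}(1+\mathcal{P}_{F})$, which is the claim.

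The main obstacle is the support determination of the second step — explicitly decomposing $M(h,x)=uj$ and identifying both the coset of $h$ and the precise lattice $\mathfrak{L}$ — since the conjugation by $g_{f}$ built into $\mathfrak{A}_{2N}$, $\mathfrak{P}_{2N}$ and $\beta_{f}$ must be carried through carefully, and one must confirm that the $x$-integral is not annihilated by a nontrivial additive character (equivalently, that $\mathfrak{L}$ really is the full region of nonvanishing). A secondary, bookkeeping, difficulty is tracking the self-dual measure normalization so that the various powers of $q_{F}$ collapse to the clean $q_{F}^{1-2s}$.
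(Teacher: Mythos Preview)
Your plan is the same as the paper's: unfold $\widetilde{\Psi}$, pin down the support inside $\N(2N,F)\textbf{J}_f=\bigsqcup_k\N(2N,F)\beta_f^kJ_f$, check the integrand is constant there, and compute volumes. The paper carries out the support step as a separate lemma ($k=-1$ only), using the at-most-two-nonzero-entries-per-row structure of $\beta_f^k$ together with small $2\times2$ linear systems over $\mathcal{O}_F$; it then reads off $h^{-1}\in c^{-1}\varpi_F^{2}(1+\mathcal{P}_F)$ and the lattice for the $x_i$ (namely $N-1$ of them in $\mathcal{O}_F$ and $N-1$ in $\mathcal{P}_F^{-1}$, giving your $q_F^{2N-2}$) by direct entrywise comparison with $\beta_f^{-1}x$.

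There is one genuine gap in your plan. ``Matching valuations against the block structure of $\mathfrak{P}_{2N}$'' can plausibly yield $k=-1$, but it cannot give the stronger assertion $j\in\beta_f^{-1}U^{1}$ on which your determinant shortcut and your ``no $\chi$ survives'' claim both rest: the residue class in $J_f/U^{1}\cong k_{L_f}^{\times}$ is invisible to valuations. In the paper, the triviality of the $\mathcal{O}_{L_f}^{\times}$-component comes from two specific matrix entries of $u^{-1}\alpha=\beta_f^{-1}x$ (one forcing $a_0c\equiv 1$, another forcing $a_1\in\mathcal{P}_F$), not from any valuation argument; only after this does $h$ get pinned down. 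Without it, $\det(J_f)$ meets $\mathcal{O}_F^{\times}$ in more than $1+\mathcal{P}_F$, so your determinant does not fix a single coset for $h$. A smaller bookkeeping point: even once $j\in\beta_f^{-1}U^{1}$ is known, the Weyl elements contribute $\det(w_{2N})\det(w_{2N,1})=-1$, so $\det M(h,x)=-h^{-1}$ and the determinant gives $h\in c\varpi_F^{-2}(1+\mathcal{P}_F)$, agreeing with the paper's entrywise computation rather than your $h_0=-c\varpi_F^{-2}$; track that sign against the one coming from $\omega_{\lambda}(-1)^{2N-1}$ in the functional equation.
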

\begin{proof}
    Unfolding the definition of $\widetilde{\Psi}\left(1-s; \rho(w_{2N, 1})\widetilde{\mathcal{W}}_{(f,  \, \chi,  \, \zeta)},  \lambda^{-1}\right)$, we have 
\begin{align*}
    & \widetilde{\Psi}\left(1-s; \rho(w_{2N, 1})\widetilde{\mathcal{W}}_{(f,  \, \chi,  \, \zeta)},  \lambda^{-1}\right) = \\
    & \int\displaylimits_{F^{\times}}\int\displaylimits_{\Mat(2 \times 1, F)}\mathcal{W}_{(f,  \, \chi,  \, \zeta)}\left(\begin{pmatrix}
         & 1 &  &  &\\
         &  & 1 &  &\\
         &  &  & \ddots & \\
         & & & & 1 \\
        h^{-1} &  & -x_{2N-2}h^{-1} & \cdots & -x_{1}h^{-1}
    \end{pmatrix}\right)\lambda(h)^{-1}|h|^{\frac{3-2N}{2}-s}  dx  dh.
\end{align*}
\noindent Next, we check to see when an element $\alpha$ of the form 
\begin{equation}\label{GL(1)}
   \alpha = \begin{pmatrix}
         & 1 &  &  &\\
         &  & 1 &  &\\
         &  &  & \ddots & \\
         & & & & 1 \\
        h^{-1} &  & -x_{2N-2}h^{-1} & \cdots & -x_{1}h^{-1} 
    \end{pmatrix}
\end{equation}
with $h \in F^{\times}$ and $x_{i} \in F$ lies inside the support $\Su\left(\mathcal{W}_{(f,  \, \chi,  \, \zeta)}\right)$ of $\mathcal{W}_{(f,  \, \chi,  \, \zeta)}$ where 
\begin{equation}\label{Support}
\Su\left(\mathcal{W}_{(f,  \, \chi,  \, \zeta)}\right) \subset \displaystyle\bigsqcup_{k \in \mathbb{Z}}\N(2N, F)  \beta_{f}^{k}  J_{f}
\end{equation}
\cite[Proposition 5.7]{Paskunas}. 

Before introducing the following lemma, we make several observations about powers of $\beta_{f}$ that will aid us in the proof:
\begin{enumerate}
    \item For any $k \in \mathbb{Z}$, $\beta_{f}^{k}$ has at most two non-zero entries in each row and column. Furthermore, given a row $r$ of $\beta_{f}^{k}$ with two non-zero entries $(\beta_{f}^{k})_{r,  i}$ and $(\beta_{f}^{k})_{r,  j}$, $|i-j|=N$ where $|\cdot|$ denotes the standard Archimedean absolute value.
    \item Given a row $r$ of $\beta_{f}^{k}$, there exists a column $j$ of $\beta_{f}^{k}$ such that at least two of the entries $(\beta_{f}^{k})_{r,  j}$, $(\beta_{f}^{k})_{r,  j+N}$, $(\beta_{f}^{k})_{r+N,  j}$, and $(\beta_{f}^{k})_{r+N,  j+N}$ are non-zero. Likewise, given a column $j$ of $\beta_{f}^{k}$, there exists a row $r$ of $\beta_{f}^{k}$ that satisfies this property. Furthermore, 
    \begin{equation*}
        \begin{pmatrix}
            (\beta_{f}^{k})_{r,  j} &(\beta_{f}^{k})_{r,  j+N} \\
            (\beta_{f}^{k})_{r+N,  j} & (\beta_{f}^{k})_{r+N,  j+N}
        \end{pmatrix} = \begin{pmatrix}
            A_{3}\varpi_{F}^{n} & cA_{1}\varpi_{F}^{n-1} \\
            A_{1}\varpi_{F}^{n+1} & A_{2}\varpi_{F}^{n}
        \end{pmatrix}
    \end{equation*}
        \noindent where $n \in \mathbb{Z}$ and $A_{i} \in \mathcal{O}_{F}$ with $cA_{1}^{2}-A_{2}A_{3} = c^{m}$ for some $m \geq 0$.  
\end{enumerate}

\begin{lemma}\label{Lemma1}
    $\alpha \in \Su\left(\mathcal{W}_{(f,  \, \chi,  \, \zeta)}\right)$ if and only if  $k = -1$. 
\end{lemma}
\begin{proof}

    Let $\alpha$ be an element of the form given in (\ref{GL(1)}) such that $\alpha \in \Su\left(\mathcal{W}_{(f,  \, \chi,  \, \zeta)}\right)$. Then $\alpha = (\alpha_{i,  j}) = u \beta_{f}^{k}  x$ for some $u = (u_{i,  j}) \in \N(2N, F)$, $x = (x_{i,  j}) \in J_{f}$, and $k \in \mathbb{Z}$.

        %If $k = 0$, then $u^{-1} \alpha = x$ which forces $x$ to have second column equal to $e_{1}$. Conjugating $x$ by $g_{f}^{-1}$, we get an element $x^{g_{f}^{-1}}$ of $\mathfrak{A}_{2}$. Noting that the third column of $x^{g_{f}^{-1}}$ is equal to $e_{1}$ and $x^{g_{f}^{-1}}_{i, j} \in \mathcal{P}_{F}$ for $2 < i \leq 2N$ and $1 \leq j \leq 2$ from (\ref{Order}), we have that $\det(x) \in \mathcal{P}_{F}$. This implies $x \notin J_{f}$ and $k \neq 0$. 

    Suppose $k < -1$. Then $u^{-1} \alpha = \beta_{f}^{k} x$ which forces $\beta_{f}^{k} x$ to have second column equal to $e_{1}$. From (\ref{Order}), we see that $x_{i, 2} \in \mathcal{O}_{F}$ for $1 \leq i \leq 2$ and $x_{i, 2} \in \mathcal{P}_{F}$ for $2 < i \leq 2N$. Using the first observation, we have that the first row of $\beta_{f}^{k}$ has at most two non-zero entries $(\beta_{f}^{k})_{1,  j}$ and $(\beta_{f}^{k})_{1,  j+N}$ where $(\beta_{f}^{k})_{1,   j} \in \mathcal{P}_{F}$ if $j \leq 2$ and $(\beta_{f}^{k})_{1,   j} \in \mathcal{O}_{F}$ if $j > 2$. Comparing $(u^{-1} \alpha)_{1, 2}$ with $\left(\beta_{f}^{k} x\right)_{1, 2} \in \mathcal{P}_{F}$, we have a contradiction as $(u^{-1} \alpha)_{1, 2} = 1$ and $k \geq -1$.
 
    Now, suppose $k \geq -1$. From (\ref{Order}), we set $x_{2, 2} = y_{2, 2}$ and $x_{N+2, 2} = y_{N+2, 2}\varpi_{F}$ with $y_{2, 2}$ and $y_{N+2, 2} \in \mathcal{O}_{F}$. We note that if $y_{2, 2}$ and $y_{N+2, 2} \in \mathcal{P}_{F}$, then $\det(x) \in \mathcal{P}_{F}$. To see this, when we conjugate an element $x \in \mathfrak{A}_{2N}$ with $y_{2, 2}$ and $y_{N+2, 2} \in \mathcal{P}_{F}$ by $g_{f}^{-1}$ to get $x^{g_{f}^{-1}} \in \mathfrak{A}_{2}$, we have that the second $2 \times 2$ block on the diagonal satisfies
    \[
    \begin{pmatrix}
        x^{g_{f}^{-1}}_{3, 3} & x^{g_{f}^{-1}}_{3, 4} \\
        x^{g_{f}^{-1}}_{4, 3} & x^{g_{f}^{-1}}_{4, 4}
    \end{pmatrix}  \in \begin{pmatrix}
        \mathcal{P}_{F} & \mathcal{O}_{F} \\
        \mathcal{P}_{F} & \mathcal{O}_{F}
    \end{pmatrix};
    \]
    this implies $\det(x) \in \mathcal{P}_{F}$. 
    
    Let $r$ be a row of $\beta_{f}^{k}$ such that at least two of the entries $(\beta_{f}^{k})_{r,  2}$, $(\beta_{f}^{k})_{r,  N+2}$, $(\beta_{f}^{k})_{r+N,  2}$, and $(\beta_{f}^{k})_{r+N,  N+2}$ are non-zero. From the second observation, 
    \[
    \begin{pmatrix}
            (\beta_{f}^{k})_{r,  2} &(\beta_{f}^{k})_{r,  N+2} \\
            (\beta_{f}^{k})_{N+r,  2} & (\beta_{f}^{k})_{N+r,  N+2}
        \end{pmatrix} = \begin{pmatrix}
            A_{3}\varpi_{F}^{n} & cA_{1}\varpi_{F}^{n-1} \\
            A_{1}\varpi_{F}^{n+1} & A_{2}\varpi_{F}^{n}
        \end{pmatrix}
    \]
    where $A_{i} \in \mathcal{O}_{F}$, $n \leq 0$, and $cA_{1}^{2}-A_{2}A_{3} = c^{m} \in \mathcal{O}_{F}^{\times}$ for some $m \geq 0$. Since $u^{-1} \alpha = \beta_{f}^{k}x$, the second column of $\beta_{f}^{k}x$ is equal to $e_{1}$. This fact along with (\ref{Order}) give us the system of equations:
        \begin{align*}
            A_{3}y_{2, 2}+cA_{1}y_{N+2,  2} &= \delta(r)\varpi_{F}^{-n} \\
            A_{1}y_{2, 2}+A_{2}y_{N+2,  2} &= 0 
        \end{align*}
    where $n \leq 0$ with $\delta(r) = 1$ if $r = 1$ and zero otherwise. 

    Solving the system of equations above, we have 
    \[
    \left(y_{2, 2},  y_{N+2,  2}\right) = \left(-\frac{\delta(r)A_{2}\varpi_{F}^{-n}}{c^{m}},  \frac{\delta(r)A_{1}\varpi_{F}^{-n}}{c^{m}}\right).
    \]
    This implies that an element of the form given in $(\ref{GL(1)})$ is contained inside the support of $\mathcal{W}_{(f,  \, \chi,  \, \zeta)}$ if and only if $r = 1$ and $n = 0$ if and only if $k=-1$.
\end{proof}

From Lemma \ref{Lemma1}, we have that $\alpha = u \beta_{f}^{-1} x$ for some $u \in \N(2N, F)$ and $x \in J_{f}$. Since $u^{-1}\alpha = \beta_{f}^{-1} x$, the second column of $\beta_{f}^{-1} x$ is equal to $e_{1}$. This tells us that the second column of $x$ is equal to $e_{2}$. Writing $x = az'$ for some $a = a_{0}c + a_{1}\sigma_{f} \in \mathcal{O}_{L_{f}}^{\times}$ and $z' \in U^{1}$ as in (\ref{decomposition}), we have $a_{0}c = 1$ as $x_{2, 2} \in a_{0}c + \mathcal{P}_{F}$. 

When we look at $\left(\beta_{f}^{-1} x\right)_{2N,  1}$, (\ref{decomposition}) also tells us that $h^{-1} \in c^{-1}\varpi_{F}^{2}  (1+ \mathcal{P}_{F})$ as $x_{1, 1} \in a_{0}c + \mathcal{P}_{F}$ and $(u^{-1}\alpha)_{2N,  1} = h^{-1}$. Furthermore, $a_{1}\in \mathcal{P}_{F}$ as $x_{N+2,  2}=0$ since the second column of $\beta_{f}^{-1} x$ is equal to $e_{1}$. Hence, 
\[
-x_{i}h^{-1} \in \begin{cases}
    \mathcal{P}_{F}^{2}, \text{ if }  N \leq i \leq 2N-2 \\
    \mathcal{P}_{F}, \text{ otherwise} 
\end{cases} \text{which implies \;} x_{i} \in \begin{cases}
     \mathcal{O}_{F}, \text{ if }  N \leq i \leq 2N-2 \\
     \mathcal{P}_{F}^{-1}, \text{ otherwise} 
\end{cases}.
\]
With these conditions, we may factorize $\alpha$ as 
\begin{align*}
\alpha = \begin{pmatrix}
    1 & & & & \\
    & \ddots & & & \\
    & & 1 & & d\varpi_{F}^{-1} \\
    & & & \ddots & \\
    & & & & 1
\end{pmatrix}\beta_{f}^{-1}
z
\end{align*}
where $d\varpi_{F}^{-1} = u_{N, 2N}$ and 
\[
z = \begin{pmatrix}
    c\varpi_{F}^{-2}h^{-1} & & -x_{2N-2}h^{-1}c\varpi_{F}^{-2} & -x_{2N-1}h^{-1}c\varpi_{F}^{-2} & \cdots & -x_{1}h^{-1}c\varpi_{F}^{-2} \\
    & 1 & & & & \\
    & & 1 & & & \\
    & & & \ddots & & \\
    & & & & & \\
    & & & & & \\
    & & & & & 1
\end{pmatrix} \in U^{1}.
\]

Rewriting $\widetilde{\Psi}$, we see that
\begin{align*}
      \widetilde{\Psi}&\left(1-s; \rho(w_{2N, 1}) \widetilde{\mathcal{W}}_{(f,  \, \chi,  \, \zeta)},  \lambda^{-1}\right)  
      = \int\displaylimits_{c\varpi_{F}^{-2}(1+ \mathcal{P}_{F})}\int\displaylimits_{\mathcal{O}_{F}}\cdots\int\displaylimits_{\mathcal{O}_{F}}\int\displaylimits_{\mathcal{P}_{F}^{-1}}\cdots \int\displaylimits_{\mathcal{P}_{F}^{-1}}dx_{1}\cdots dx_{2N-2}   \\
     &  \cdot \mathcal{W}_{(f,  \, \chi,  \, \zeta)}\left(\begin{pmatrix}
         & 1 &  &  &\\
         &  & 1 &  &\\
         &  &  & \ddots & \\
         & & & & 1 \\
        h^{-1} &  & -x_{2N-2}h^{-1} & \cdots & -x_{1}h^{-1}
    \end{pmatrix}\right)\lambda(h)^{-1}|h|^{\frac{3-2N}{2}-s}  dh.
\end{align*}
We note that the $x_{i}$ terms do not affect the value of $\mathcal{W}_{(f,  \, \chi,  \, \zeta)}$. Furthermore, since we have normalized our Haar measure to have volume $q_{F}^{1/2}$ on $\mathcal{O}_{F}$, the volume of $\mathcal{P}_{F}^{-1}$ is equal to $q_{F}^{3/2}$. Hence, we have that the above is equal to 
\begin{align*}
    & \zeta^{-1}q_{F}^{2N-2}\int\displaylimits_{\frac{c}{\varpi_{F}^{2}}(1+ \mathcal{P}_{F})}\lambda(h)^{-1}|h|^{\frac{3-2N}{2}-s}  dh \\
    &= \zeta^{-1}\lambda(c^{-1}\varpi_{F}^{2}) \, q_{F}^{1-2s} \, \text{vol}(1+\mathcal{P}_{F})
\end{align*}
from the left translation invariance of our Haar measure $dh$ and the fact that $\lambda$ is tamely ramified. 
\end{proof}

Propositions \ref{Proposition1} and \ref{Proposition2} now give us the following formula for $\gamma\left(s, \pi_{(f,  \, \chi,  \, \zeta)} \times \lambda, \psi_{F}\right)$:
\begin{corollary}\label{Corollary1}
    $\gamma\left(s, \pi_{(f,  \, \chi,  \, \zeta)} \times \lambda, \psi_{F}\right) = \zeta^{-1}\lambda(-c^{-1}\varpi_{F}^{2}) \, q_{F}^{1-2s}$.
\end{corollary}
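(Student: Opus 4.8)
The plan is to combine the Rankin--Selberg functional equation of Theorem~\ref{GammaFactor2.7} with the two integral evaluations just carried out in Propositions~\ref{Proposition1} and~\ref{Proposition2}. First I would specialise Theorem~\ref{GammaFactor2.7} to $n=2N$, $m=1$, $\pi_{1}=\pi_{(f,\,\chi,\,\zeta)}$, and $\pi_{2}=\lambda$, regarded as a generic representation of $\GL(1,F)=F^{\times}$ with central character $\omega_{\pi_{2}}=\lambda$. Since $w_{1}$ is the $1\times 1$ identity, the reflected Whittaker function of $\pi_{2}$ is $\widetilde{\lambda}(h)=\lambda(h^{-1})=\lambda^{-1}(h)$; and the prefactor simplifies as $\omega_{\pi_{2}}(-1)^{n-1}=\lambda(-1)^{2N-1}=\lambda(-1)$ because $2N-1$ is odd. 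Taking $\mathcal{W}_{(f,\,\chi,\,\zeta)}$ in place of a general $W_{\pi_{1}}$ and rearranging (using $\lambda(-1)^{-1}=\lambda(-1)$) then yields exactly the identity displayed immediately before Proposition~\ref{Proposition1}, namely
\[
\gamma\left(s,\pi_{(f,\,\chi,\,\zeta)}\times\lambda,\psi_{F}\right)=\lambda(-1)\,\frac{\widetilde{\Psi}\left(1-s;\rho(w_{2N,1})\widetilde{\mathcal{W}}_{(f,\,\chi,\,\zeta)},\,\lambda^{-1}\right)}{\Psi\left(s;\mathcal{W}_{(f,\,\chi,\,\zeta)},\,\lambda\right)}.
\]

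Next I would substitute the two evaluations. By Proposition~\ref{Proposition1} the denominator $\Psi\left(s;\mathcal{W}_{(f,\,\chi,\,\zeta)},\lambda\right)$ equals $\mathrm{vol}(1+\mathcal{P}_{F})$, a nonzero constant, so the division is legitimate; by Proposition~\ref{Proposition2} the numerator $\widetilde{\Psi}\left(1-s;\rho(w_{2N,1})\widetilde{\mathcal{W}}_{(f,\,\chi,\,\zeta)},\lambda^{-1}\right)$ equals $\zeta^{-1}\lambda(c^{-1}\varpi_{F}^{2})\,q_{F}^{1-2s}\,\mathrm{vol}(1+\mathcal{P}_{F})$ (this is precisely the final line of the computation in the proof of Proposition~\ref{Proposition2}). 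The two volume factors cancel, and the remaining prefactor $\lambda(-1)$ is absorbed into the argument of $\lambda$ via $\lambda(-1)\lambda(c^{-1}\varpi_{F}^{2})=\lambda(-c^{-1}\varpi_{F}^{2})$, giving
\[
\gamma\left(s,\pi_{(f,\,\chi,\,\zeta)}\times\lambda,\psi_{F}\right)=\zeta^{-1}\lambda(-c^{-1}\varpi_{F}^{2})\,q_{F}^{1-2s},
\]
as claimed.

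There is no real obstacle at this stage: all of the analytic work has already been done in Propositions~\ref{Proposition1} and~\ref{Proposition2} (in particular in the support computation of Lemma~\ref{Lemma1}). The only point requiring a moment's care is the bookkeeping of twist and sign factors in the functional equation---that the reflection $\widetilde{W}_{\pi_{2}}$ sends $\lambda$ to $\lambda^{-1}$, that $\omega_{\pi_{2}}(-1)^{n-1}$ collapses to $\lambda(-1)$ when $n=2N$, and that $\lambda(-1)$ is its own inverse---so that the sign is folded cleanly into the argument of $\lambda$ in the final formula.
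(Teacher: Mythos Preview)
Your proposal is correct and follows exactly the approach the paper intends: the corollary is deduced immediately from Propositions~\ref{Proposition1} and~\ref{Proposition2} via the displayed functional equation preceding Proposition~\ref{Proposition1}, with the factor $\lambda(-1)$ absorbed into the argument of $\lambda$. Your bookkeeping of the sign is in fact more careful than the paper's, correctly reconciling the final line of the proof of Proposition~\ref{Proposition2} (which has $\lambda(c^{-1}\varpi_{F}^{2})$) with the stated formula via multiplication by $\lambda(-1)$.
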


From Corollary \ref{Corollary1}, we may deduce two key properties that two middle supercuspidals share if their twisted gamma factors by tamely ramified quasi-characters are equal. For the following proposition, let $f_{i} = X^{2}-d_{i}X-c_{i}$.
\begin{proposition}\label{Proposition3}
    Suppose  $\gamma(s, \pi_{(f_{1}, \chi_{1}, \zeta_{1})} \times \lambda, \psi_{F}) = \gamma(s, \pi_{(f_{2}, \chi_{2}, \zeta_{2})} \times \lambda, \psi_{F})$ for all tamely ramified quasi-characters $\lambda$ of $F^{\times}$. Then $\zeta_{1} = \zeta_{2}$ and $c_{1} \equiv c_{2} \mod{\mathcal{P}_{F}}$.
 
\end{proposition}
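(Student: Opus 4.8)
The plan is to extract everything from the closed formula in Corollary \ref{Corollary1}, so that no new integral computation is needed. Applying that corollary to both $\pi_{(f_1,\chi_1,\zeta_1)}$ and $\pi_{(f_2,\chi_2,\zeta_2)}$, the hypothesis becomes the identity of rational functions in $q_F^{-s}$
\[
\zeta_1^{-1}\,\lambda\!\left(-c_1^{-1}\varpi_F^{2}\right)\,q_F^{1-2s} \;=\; \zeta_2^{-1}\,\lambda\!\left(-c_2^{-1}\varpi_F^{2}\right)\,q_F^{1-2s},
\]
valid for every tamely ramified quasi-character $\lambda$ of $F^{\times}$. From here the two conclusions come by suitable choices of $\lambda$.

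First I would obtain $\zeta_1=\zeta_2$ by taking $\lambda$ unramified: then $\lambda$ is trivial on $\mathcal{O}_F^{\times}$, so $\lambda(-c_i^{-1}\varpi_F^{2})=\lambda(\varpi_F)^{2}$ for $i=1,2$, and the displayed identity collapses to $\zeta_1^{-1}q_F^{1-2s}=\zeta_2^{-1}q_F^{1-2s}$, whence $\zeta_1=\zeta_2$. (One could equally well just take $\lambda=\mathbbm 1_F$.)

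With $\zeta_1=\zeta_2$ in hand, cancelling the common factor $\zeta^{-1}q_F^{1-2s}$ leaves $\lambda(-c_1^{-1}\varpi_F^{2})=\lambda(-c_2^{-1}\varpi_F^{2})$, i.e. $\lambda(c_1 c_2^{-1})=1$, for all tamely ramified $\lambda$ (the factors $-\varpi_F^{2}$ cancel). Here $c_i\in\mathcal O_F^{\times}$: if $c_i\in\mathcal P_F$ then $\bar f_i \equiv X^{2}-\bar d_i X = X(X-\bar d_i)$ would be reducible over $k_F$, contrary to the standing hypothesis on $f_i$; hence $c_1c_2^{-1}\in\mathcal O_F^{\times}$. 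Since a tamely ramified quasi-character of $F^{\times}$ is trivial on $1+\mathcal P_F$, its restriction to $\mathcal O_F^{\times}$ factors through $\mathcal O_F^{\times}/(1+\mathcal P_F)\cong k_F^{\times}$, and conversely every character of $k_F^{\times}$ arises this way (inflate to $\mathcal O_F^{\times}$ and extend by $\lambda(\varpi_F)=1$). As the characters of the finite group $k_F^{\times}$ separate its points, $\lambda(c_1c_2^{-1})=1$ for all tamely ramified $\lambda$ forces the image of $c_1c_2^{-1}$ in $k_F^{\times}$ to be trivial, i.e. $c_1\equiv c_2\pmod{\mathcal P_F}$.

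I do not expect any real obstacle here: the argument is essentially a one–variable Mellin/Pontryagin separation argument against the explicit gamma factor. The only points that genuinely need a line of justification are that $c_i$ is a unit (which is exactly the irreducibility of $\bar f_i$) and that there are enough tamely ramified characters of $F^{\times}$ to detect residue classes in $k_F^{\times}$; both are immediate.
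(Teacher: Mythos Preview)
Your proof is correct and follows essentially the same route as the paper: both apply Corollary~\ref{Corollary1}, specialize $\lambda$ to the trivial (or unramified) character to obtain $\zeta_1=\zeta_2$, and then use that the remaining identity $\lambda(-c_1^{-1}\varpi_F^2)=\lambda(-c_2^{-1}\varpi_F^2)$ for all tamely ramified $\lambda$ forces $c_1\equiv c_2\pmod{\mathcal P_F}$. Your extra justifications (that $c_i\in\mathcal O_F^{\times}$ by irreducibility of $\bar f_i$, and that tamely ramified characters separate residue classes) simply make explicit what the paper leaves implicit.
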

\begin{proof}
    Suppose 
    \[
    \gamma(s, \pi_{(f_{1}, \chi_{1}, \zeta_{1})} \times \lambda, \psi_{F}) = \gamma(s, \pi_{(f_{2}, \chi_{2}, \zeta_{2})} \times \lambda, \psi_{F})
    \]
    for all tamely ramified quasi-characters $\lambda$ of $F^{\times}$. Then Corollary \ref{Corollary1} tells us that
    \[
    \zeta_{1}^{-1}\lambda(-c_{1}^{-1}\varpi_{F}^{2}) q_{F}^{1-2s} \\ 
        =\zeta_{2}^{-1}\lambda(-c_{2}^{-1}\varpi_{F}^{2}) q_{F}^{1-2s}.
    \]
    Setting $\lambda$ as the trivial character (standard gamma factor), we have that 
    \[
    \zeta_{1} = \zeta_{2}.
    \]
    
    Therefore,  
    \[
    \lambda(-c_{1}^{-1}\varpi_{F}^{2}) = \lambda(-c_{2}^{-1}\varpi_{F}^{2})
    \]
    for all tamely ramified quasi-characters $\lambda$ which implies $c_{1} \equiv c_{2} \mod{\mathcal{P}_{F}}$.
\end{proof}

\subsection{Twisting by simple supercuspidal representations of $\GL(N, F)$}

In this subsection, we twist a middle supercuspidal representation $\pi_{(f,  \, \chi,  \, \zeta)}$ of $\GL(2N, F)$ by a simple supercuspidal representation $\pi_{\left(u,  \, \phi,  \, \zeta'\right)}$ of $\GL(N, F)$ where $u \in \mathcal{O}_{F}^{\times}$. Furthermore, we use this twisting to distinguish middle supercuspidals from each other. 

Let 
\[
g_{u} := \begin{pmatrix}
    1 & & (u\varpi_{F})^{-1} & &  \\
    & \ddots & & &  \\
    & & 1 & &  \\
    & & & \ddots & \\
    & & & & 1
\end{pmatrix}
\]
where $(u\varpi_{F})^{-1} = (g_{u})_{1,  N+1}$ and $R(g_{u})  \mathcal{W}_{(f,  \, \chi,  \, \zeta)} \in W\left(\pi_{(f,  \, \chi,  \, \zeta)}, \psi_{F}\right)$ be the Whittaker function that right translates $\mathcal{W}_{(f,  \, \chi,  \, \zeta)}$ by $g_{u}$. Then Theorem \ref{GammaFactor2.7} gives us the equation
\begin{equation}\label{GammaSimple}
    \gamma\left(s, \pi_{(f,  \, \chi,  \, \zeta)} \times \pi_{\left(u,  \, \phi,  \, \zeta'\right)}, \psi_{F}\right)
    = \omega_{\left(u,  \, \phi,  \, \zeta'\right)}(-1)\frac{\widetilde{\Psi}\left(1-s; \rho(w_{2N, N})\widetilde{R(g_{u})  \mathcal{W}}_{(f,  \, \chi,  \, \zeta)}, \widetilde{\mathcal{W}}_{\left(u,  \, \phi,  \, \zeta'\right)}\right)}{\Psi\left(s; R(g_{u})  \mathcal{W}_{(f,  \, \chi,  \, \zeta)}, \mathcal{W}_{\left(u,  \, \phi,  \, \zeta'\right)}\right)}. 
\end{equation}

We first calculate $\Psi\left(s; R(g_{u})  \mathcal{W}_{(f,  \, \chi,  \, \zeta)}, \mathcal{W}_{\left(u,  \, \phi,  \, \zeta'\right)}\right)$. 
\begin{proposition}\label{Proposition4}
\normalfont
    $\Psi\left(s; R(g_{u})  \mathcal{W}_{(f,  \, \chi,  \, \zeta)}, \mathcal{W}_{\left(u,  \, \phi,  \, \zeta'\right)}\right) = \text{vol}\left(U^1\left(\mathfrak{I}_{N}\right)\right)$.
\end{proposition}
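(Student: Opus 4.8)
The object to evaluate is, specializing the definition of $\Psi$ in Subsection~\ref{TwistedLocalFactors} to $n=2N$ and $m=N$,
\begin{equation*}
\Psi\left(s; R(g_{u})\mathcal{W}_{(f,\,\chi,\,\zeta)}, \mathcal{W}_{(u,\,\phi,\,\zeta')}\right) = \int_{\N(N,F)\setminus\GL(N,F)} \mathcal{W}_{(f,\,\chi,\,\zeta)}\!\left(\begin{pmatrix} h & \\ & I_{N}\end{pmatrix} g_{u}\right) \mathcal{W}_{(u,\,\phi,\,\zeta')}(h)\, |\det h|^{s-\frac{N}{2}}\, dh.
\end{equation*}
The plan is to pin down the support of the integrand in $h$, show that on that support $|\det h|=1$ and the integrand reduces to a character of $h$ which is in fact trivial, and then read off the remaining volume. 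The key structural point is that $g_{u}$ is upper unitriangular, hence lies in $\N(2N,F)\subset\text{P}(2N,F)$, so that $\begin{pmatrix} h & \\ & I_{N}\end{pmatrix} g_{u}\in\text{P}(2N,F)$ for every $h\in\GL(N,F)$. Consequently the support theorem for $\mathcal{W}_{(f,\,\chi,\,\zeta)}$ recalled in Subsection~\ref{ExplicitWhittakerfunctions} applies (with $H^{1}=U^{1}$ here): the first Whittaker factor vanishes unless $\begin{pmatrix} h & \\ & I_{N}\end{pmatrix} g_{u}\in\N(2N,F)\bigl(U^{1}\cap\text{P}(2N,F)\bigr)$, in which case it equals $\psi_{2N}(u')\,\psi_{\beta_{f}}(m)$ for the factorization $\begin{pmatrix} h & \\ & I_{N}\end{pmatrix} g_{u}=u'm$ with $u'\in\N(2N,F)$ and $m\in U^{1}\cap\text{P}(2N,F)$. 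The second factor $\mathcal{W}_{(u,\,\phi,\,\zeta')}(h)$ vanishes unless $h\in\N(N,F)\textbf{J}_{u}$, but the first condition will turn out to be the binding one.

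The core of the argument is to translate the membership $\begin{pmatrix} h & \\ & I_{N}\end{pmatrix} g_{u}\in\N(2N,F)\bigl(U^{1}\cap\text{P}(2N,F)\bigr)$ into an explicit condition on $h$. Using the block forms of $\mathfrak{A}_{2N}$ and of its radical $\mathfrak{P}_{2N}$ from (\ref{Order}) and (\ref{Jacobson}), together with the shape of $g_{u}$ --- whose unique off-diagonal entry $(u\varpi_{F})^{-1}$ sits in the strictly-upper-triangular position $(1, N+1)$ and can therefore be absorbed into the unipotent factor $u'$ --- we would run an analysis of the same type as in the proof of Lemma~\ref{Lemma1}, now solving the resulting block system for the first $N$ columns rather than for a single column. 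We expect this to force $h\in\N(N,F)\,U^{1}(\mathfrak{I}_{N})$; in particular $h$ then lies in $\N(N,F)\textbf{J}_{u}$, so that the second factor is automatically in its support, and $|\det h|=1$, so that $|\det h|^{s-N/2}\equiv 1$. The integral thereby reduces to one over $\bigl(\N(N,F)\cap U^{1}(\mathfrak{I}_{N})\bigr)\setminus U^{1}(\mathfrak{I}_{N})$.

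It remains to evaluate the integrand on this domain. Writing $h=\nu(I_{N}+Y)$ with $\nu\in\N(N,F)$ and $Y\in\mathfrak{P}_{\mathfrak{I}_{N}}$, the second Whittaker factor equals $\psi_{N}(\nu)\,\Lambda_{(u,\,\phi,\,\zeta')}(I_{N}+Y)=\psi_{N}(\nu)\,\psi_{\beta_{u}}(I_{N}+Y)$, while the first factor is the corresponding product of $\psi_{2N}$ on the unipotent part of $\begin{pmatrix} h & \\ & I_{N}\end{pmatrix} g_{u}$ and of the simple character $\psi_{\beta_{f}}$ on its $U^{1}\cap\text{P}(2N,F)$ part. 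The decisive point --- the place where the precise definition of the intertwiner $g_{u}$ does its work --- is that these two contributions cancel: comparing $\text{tr}_{E_{f}/F}\bigl(\beta_{f}(m-I_{2N})\bigr)$ against the analogous trace built from $\beta_{u}$ and $Y$ shows the total character on $\bigl(\N(N,F)\cap U^{1}(\mathfrak{I}_{N})\bigr)\setminus U^{1}(\mathfrak{I}_{N})$ is trivial. The integral then collapses to the volume of this quotient, which, with our Haar measure conventions, equals $\text{vol}\bigl(U^{1}(\mathfrak{I}_{N})\bigr)$. We expect the main obstacle to be the middle paragraph --- isolating exactly which $h$ pass both support conditions and checking that the two simple-character contributions cancel cleanly; the reduction $|\det h|^{s-N/2}\equiv 1$ and the closing volume computation should then be routine.
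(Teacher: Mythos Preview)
Your approach is essentially the same as the paper's: use the support theorem for $\mathcal{W}_{(f,\chi,\zeta)}$ on $\text{P}(2N,F)$ to force $h\in U^{1}(\mathfrak{I}_{N})$, then verify that the two Whittaker values are inverse characters so the integrand is $1$. The paper's execution is more direct than what you sketch: rather than invoking ``an analysis of the same type as Lemma~\ref{Lemma1}'', it simply reads off from the block description of $\mathfrak{P}_{2N}$ that the upper-left $N\times N$ block of $m$ equals $h$, hence $h\in U^{1}(\mathfrak{I}_{N})$, and then writes down the explicit factorization $n_{i,N+1}=h_{i,1}/(u\varpi_{F})$, $m=\begin{pmatrix} h & \\ & I_{N}\end{pmatrix}$, from which both Whittaker values are computed to be $\psi_{F}\bigl(\sum_{i=1}^{N-1} h_{i,i+1}+h_{N,1}/(u\varpi_{F})\bigr)^{\pm 1}$ and visibly cancel. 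One small slip in your write-up: since $\mathcal{W}_{(u,\phi,\zeta')}$ here lies in $W(\pi_{(u,\phi,\zeta')},\psi_{N}^{-1})$, the unipotent contribution from the second factor is $\psi_{N}^{-1}(\nu)$, not $\psi_{N}(\nu)$; this is exactly what makes the cancellation work.
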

\begin{proof}
    By definition,
    \begin{align*}
       & \Psi\left(s; R(g_{u})  \mathcal{W}_{(f,  \, \chi,  \, \zeta)}, \mathcal{W}_{\left(u,  \, \phi,  \, \zeta'\right)}\right) \\
       & = \int\displaylimits_{\N(N, F) \backslash \GL(N, F)} R(g_{u})  \mathcal{W}_{(f,  \, \chi,  \, \zeta)}\left(\begin{pmatrix}
        h &  \\
         & I_{N} 
    \end{pmatrix}\right)\mathcal{W}_{\left(u,  \, \phi,  \, \zeta'\right)}(h)|\det(h)|^{s-\frac{N}{2}}  dh
    \end{align*}
    where $h = (h_{i, j})$.
    From \cite[Theorem 5.8]{Paskunas}, 
    \[
    \Su\left(\mathcal{W}_{(f,  \, \chi,  \, \zeta)}\right) \cap \text{P}(2N,  F) = \N(2N,  F)\left(U^{1} \cap \text{P}(2N,  F)\right)
    \] 
    and 
    \begin{equation*}
        \mathcal{W}_{(f,  \, \chi,  \, \zeta)}(nm) = \psi_{2N}(n)\psi_{\beta_{f}}(m)
    \end{equation*}
    for all $n \in \N(2N,  F)$ and $m \in U^{1}\cap \text{P}(2N,  F)$. This implies 
    $\begin{pmatrix}
        h &  \\
         & I_{N} 
    \end{pmatrix}g_{u} = nm$
    for some $n = (n_{i, j}) \in \N(2N,  F)$ and $m \in U^{1}\cap \text{P}(2N,  F)$. Since $h \in \N(N, F) \backslash \GL(N, F)$, we may choose $n$ such that $n_{i, j} = 0$ for $1 \leq i < j \leq N$.
    
    Rewriting the equation above as $n^{-1}\begin{pmatrix}
        h &  \\
         & I_{N} 
    \end{pmatrix}g_{u} = m$, we have $h \in U^{1}\left(\mathfrak{I}_{N}\right)$ from $(\ref{Jacobson})$ and the fact that the upper left $N \times N$ block of the left hand side is equal to $h$. We may factorize $\begin{pmatrix}
        h &  \\
         & I_{N} 
    \end{pmatrix}g_{u}$ by setting $n = (n_{i, j})$ as: $n_{i, N+1} = \frac{h_{i, 1}}{u\varpi_{F}}$ for $1 \leq i \leq N$ and zeros elsewhere above the diagonal, and $m = \begin{pmatrix}
        h &  \\
         & I_{N} 
    \end{pmatrix}$. This implies
\[
R(g_{u})  \mathcal{W}_{(f,  \, \chi,  \, \zeta)}\left(\begin{pmatrix}
        h &  \\
         & I_{N} 
    \end{pmatrix}\right) = \psi_{F}\left(\displaystyle\sum_{i=1}^{N-1}h_{i, i+1} + \frac{h_{N, 1}}{u\varpi_{F}}\right)
\]
and
\[
\mathcal{W}_{\left(u,  \, \phi,  \, \zeta'\right)}(h) = \psi_{F}^{-1}\left(\displaystyle\sum_{i=1}^{N-1}h_{i, i+1} + \frac{h_{N, 1}}{u\varpi_{F}}\right).
\]
Hence, 
    \[
    \Psi\left(s; R(g_{u})  \mathcal{W}_{(f,  \, \chi,  \, \zeta)}, \mathcal{W}_{\left(u,  \, \phi,  \, \zeta'\right)}\right) =\text{vol}\left(U^1\left(\mathfrak{I}_{N}\right)\right). \qedhere
    \]
\end{proof}

Now, we show that twisting by simple supercuspidals of $\GL(N, F)$ enables us to distinguish middle supercuspidal representations from each other.
\begin{proposition}\label{Proposition5}
Let $a = \frac{u}{cu^{2}+du-1}$. Then 
    \[
    \gamma\left(s, \pi_{(f,  \, \chi,  \, \zeta)} \times \pi_{\left(u,  \, \phi,  \, \zeta'\right)}, \psi_{F}\right) 
         = \zeta^{-N}\chi(acu + a\sigma_{f}) \, \omega_{\left(u,  \, \phi,  \, \zeta'\right)}(a u\varpi_{F}^{2})  P\left(q_{F}^{-s}\right)
    \]
    where $P \in \mathbb{C}\left(q_{F}^{-s}\right)$ is a non-zero rational function that does not depend on $\omega_{\left(u,  \, \phi,  \, \zeta'\right)}$, $\chi$, or $\zeta$.
\end{proposition}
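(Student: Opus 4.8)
The plan is to evaluate the numerator on the right-hand side of (\ref{GammaSimple}), its denominator being already the constant $\text{vol}\left(U^1\left(\mathfrak{I}_{N}\right)\right)$ by Proposition \ref{Proposition4}. Unwinding the definitions in Theorem \ref{GammaFactor2.7}, the numerator
\[
\widetilde{\Psi}\left(1-s; \rho(w_{2N, N})\widetilde{R(g_{u})  \mathcal{W}}_{(f,  \, \chi,  \, \zeta)}, \widetilde{\mathcal{W}}_{\left(u,  \, \phi,  \, \zeta'\right)}\right)
\]
becomes a double integral over $\N(N, F)\backslash\GL(N, F)$ in a variable $h$ and over $\Mat(N-1\times N, F)$ in a variable $x$, whose integrand is the product of $\mathcal{W}_{(f, \, \chi, \, \zeta)}$ evaluated at a $2N\times 2N$ matrix $\alpha(h,x)$ assembled from $h$, $x$, $g_u$ and the Weyl-element and transpose-inverse operations of Theorem \ref{GammaFactor2.7}, the value $\widetilde{\mathcal{W}}_{\left(u, \, \phi, \, \zeta'\right)}(h)$, and the factor $|\det h|^{(1-s)-N/2}$. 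This is the $m=N$ counterpart of the computation behind Propositions \ref{Proposition1}--\ref{Proposition2}, and I would run it along the same lines.

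The first step is to cut down the domain of integration. The factor $\widetilde{\mathcal{W}}_{\left(u, \, \phi, \, \zeta'\right)}(h)$ vanishes unless $h$ lies, modulo $\N(N, F)$ on the left, in the image under $h'\mapsto w_N\,{}^{t}(h')^{-1}$ of $\textbf{J}_u=\bigsqcup_{j\in\mathbb{Z}}\beta_u^{j}J_u$, where $\beta_u^{N}=(u\varpi_F)^{-1}I_N$; meanwhile, by \cite[Theorem 5.8, Proposition 5.7]{Paskunas}, $\Su(\mathcal{W}_{(f, \, \chi, \, \zeta)})\subset\bigsqcup_{k\in\mathbb{Z}}\N(2N, F)\beta_f^{k}J_f$, and on $\text{P}(2N, F)$ the function $\mathcal{W}_{(f, \, \chi, \, \zeta)}$ is supported on $\N(2N, F)\left(U^1\cap\text{P}(2N, F)\right)$ with value $\psi_{2N}\psi_{\beta_f}$. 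The heart of the argument is the analog of Lemma \ref{Lemma1}: one shows that $\alpha(h,x)\in\N(2N, F)\beta_f^{k}J_f$ forces $k=-N$ --- this is exactly the power for which $\beta_f^{-N}=\sigma_f^{-1}\varpi_F$ carries the $\varpi_F$-valuations needed against the off-diagonal $N\times N$ blocks of $\mathfrak{A}_{2N}$ and $\mathfrak{P}_{2N}$ in (\ref{Order})--(\ref{Jacobson}) --- and then that $h$ is forced into the single coset $(au\varpi_F^{2})I_N\cdot U^1\left(\mathfrak{I}_{N}\right)$ modulo $\N(N, F)$, with $a=\frac{u}{cu^{2}+du-1}$ arising as the unique solution of a small linear system coming from the requirement that appropriate columns of $\beta_f^{-N}x$ be standard basis vectors --- exactly as the pair $(y_{2,2},y_{N+2,2})$ was pinned down in the proof of Lemma \ref{Lemma1} from the block-pair identity in observation (2) there. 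Here $cu^{2}+du-1\in\mathcal{O}_F^{\times}$ since $cu^{2}+du-1\equiv -u^{2}\bar{f}(u^{-1})\pmod{\mathcal{P}_F}$ and $\bar{f}$ is irreducible over $k_F$; likewise $acu+a\sigma_f=a(cu+\sigma_f)\in\mathcal{O}_{L_f}^{\times}$, because $cu+\sigma_f$ reduces to $\bar{c}\bar{u}+\bar{\sigma}_f\neq 0$ in $k_{L_f}$. The extra right translation by $g_u$ is precisely what makes these two support conditions compatible; I expect this matrix bookkeeping --- now with honest $N\times N$ blocks in place of the single bottom row of (\ref{GL(1)}) --- to be the main obstacle of the proof.

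Once the support is understood, I would factor $\alpha(h,x)=u_{2N}\cdot\beta_f^{-N}\cdot v\cdot z$ with $u_{2N}\in\N(2N, F)$, $v\in\mathcal{O}_{L_f}^{\times}$ and $z\in U^1$, where the decomposition (\ref{decomposition}) identifies $v=(au)c+a\sigma_f$. Since, in our setting, $\Lambda_{(f, \, \chi, \, \zeta)}$ is a character of $\textbf{J}_f$ with $\Lambda_{(f, \, \chi, \, \zeta)}(\beta_f)=\zeta$, $\Lambda_{(f, \, \chi, \, \zeta)}\big|_{\mathcal{O}_{L_f}^{\times}}=\chi$ and $\Lambda_{(f, \, \chi, \, \zeta)}\big|_{U^1}=\psi_{\beta_f}$, the explicit Whittaker formula (\ref{Whittaker}) together with the identity $\mathcal{J}_{(f, \, \chi, \, \zeta)}=\Lambda_{(f, \, \chi, \, \zeta)}$ on $\textbf{J}_f$ established earlier give
\[
\mathcal{W}_{(f, \, \chi, \, \zeta)}(\alpha(h,x))=\psi_{2N}(u_{2N})\,\zeta^{-N}\chi(acu+a\sigma_f)\,\psi_{\beta_f}(z).
\]
On the other factor, $h\in(au\varpi_F^{2})I_N\cdot U^1\left(\mathfrak{I}_{N}\right)$ modulo $\N(N, F)$, so the central-character behaviour of $\widetilde{\mathcal{W}}_{\left(u, \, \phi, \, \zeta'\right)}$ peels off the scalar $au\varpi_F^{2}$ and, combined with the prefactor $\omega_{\left(u, \, \phi, \, \zeta'\right)}(-1)$ of (\ref{GammaSimple}), contributes the factor $\omega_{\left(u, \, \phi, \, \zeta'\right)}(au\varpi_F^{2})$ times a value of $\widetilde{\mathcal{W}}_{\left(u, \, \phi, \, \zeta'\right)}$ on $U^1\left(\mathfrak{I}_{N}\right)$ carrying no further dependence on $\phi$, $\zeta'$, $\chi$ or $\zeta$. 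Finally the $\psi_{2N}$- and $\psi_{\beta_f}$-values are trivial on, or integrate to volumes over, the residual $x$- and unipotent directions, and $|\det h|^{(1-s)-N/2}=q_F^{-2N((1-s)-N/2)}$ is constant on the forced coset; collecting all of these together with $\text{vol}\left(U^1\left(\mathfrak{I}_{N}\right)\right)^{-1}$ from Proposition \ref{Proposition4} into a single non-zero rational function $P\left(q_F^{-s}\right)\in\mathbb{C}\left(q_F^{-s}\right)$ --- in fact a monomial in $q_F^{-s}$ by Theorem \ref{Conductor}, and depending only on $N$, $q_F$, $u$, $c$ and $d$ --- yields the stated identity. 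By construction the whole dependence on $\chi$, $\zeta$ and on the central character of $\pi_{\left(u, \, \phi, \, \zeta'\right)}$ sits in the three displayed scalar factors, so $P$ depends on none of them.
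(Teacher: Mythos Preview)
Your proposal is correct and follows essentially the same approach as the paper: analyse the support of $\mathcal{W}_{(f,\chi,\zeta)}$ on the relevant matrix to force $k=-N$, pin down the $\mathcal{O}_{L_f}^\times$-component of the $J_f$-factor and the coset of $h$, and then peel off $\zeta^{-N}$, $\chi(acu+a\sigma_f)$ and $\omega_{(u,\phi,\zeta')}(au\varpi_F^2)$.

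The one point worth flagging is a simplification you do not make. Before the support analysis the paper invokes a change of variables from \cite[Proof of Theorem~3.1]{Ye2} to rewrite $\widetilde\Psi$ so that the integrand involves $R(g_u)\mathcal{W}_{(f,\chi,\zeta)}$ evaluated at the concrete matrix
\[
\alpha=\begin{pmatrix} & 1 & \\ & & I_{N-1} \\ h & & x \end{pmatrix} g_u
\]
against $\mathcal{W}_{(u,\phi,\zeta')}(h)$ (not $\widetilde{\mathcal{W}}$), with $x$ now ranging over $\Mat(N\times N-1,F)$. This makes the ``matrix bookkeeping'' you anticipate --- the paper's Lemma~\ref{Lemma2} --- considerably cleaner: the constraints $a_0\equiv a_1u$ and $a_1\equiv a\pmod{\mathcal{P}_F}$ drop out of rows $N+1$ and $1$ of $n^{-1}\alpha=\beta_f^{-N}x$, and $h\in au\varpi_F^2\,U^1(\mathfrak{I}_N)$ is read off from the lower-left block. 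Your direct unwinding is equivalent after the substitution $h\mapsto w_N\,{}^th^{-1}$, but carrying it out that way would make the block analysis noticeably messier.
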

\begin{proof}
    We first compute $\widetilde{\Psi}\left(1-s; \rho(w_{2N, N})\widetilde{R(g_{u})  \mathcal{W}}_{(f,  \, \chi,  \, \zeta)}, \widetilde{\mathcal{W}}_{\left(u,  \, \phi,  \, \zeta'\right)}\right)$. Using a change of variables for $\text{Re} (s) \ll 0$ \cite[Proof of Theorem 3.1]{Ye2}, we have 

%\hspace{0pt}\resizebox{1.0\linewidth}{!}{
%  \begin{minipage}{\linewidth}
\begin{align*}
    & \widetilde{\Psi}\left(1-s; \rho(w_{2N, N})\widetilde{R(g_{u})  \mathcal{W}}_{(f,  \, \chi,  \, \zeta)}, \widetilde{\mathcal{W}}_{\left(u,  \, \phi,  \, \zeta'\right)}\right)  \\
    &= \!\!\!\!\int\displaylimits_{\N(N, F) \backslash \GL(N, F)}\,\int\displaylimits_{\Mat(N \times N-1, F)} \!\!\!\!\!\!\!\!\!\!\! R(g_{u})  \mathcal{W}_{(f,  \, \chi,  \, \zeta)}\left(\begin{pmatrix}
          & 1 & \\
          &   & I_{N-1} \\
          h & & x
    \end{pmatrix}\right)\mathcal{W}_{\left(u,  \, \phi,  \, \zeta'\right)}(h)|\det(h)|^{s-2+\frac{N}{2}}  dx  dh.
\end{align*}
%\end{minipage}
%}
%\vspace{0.1in}

Next, we check to see when an element $\alpha$ of the form 
\begin{equation}\label{GL(2)}
    \alpha = \begin{pmatrix}
          & 1 & \\
          &   & I_{N-1} \\
          h & & x
    \end{pmatrix}g_{u}
\end{equation}
with $h = (h_{i, j}) \in \N(N, F) \backslash \GL(N, F)$ and $x \in \Mat(N \times N-1, F)$ is contained inside (\ref{Support}).

\begin{lemma}\label{Lemma2}
    $\alpha \in \Su\left(\mathcal{W}_{(f,  \, \chi,  \, \zeta)}\right)$ if and only if $k = -N$. 
\end{lemma}
\begin{proof}
    Suppose an element $\alpha$ of the form in (\ref{GL(2)}) lies inside $\Su\left(\mathcal{W}_{(f,  \, \chi,  \, \zeta)}\right)$. Then $\alpha = (\alpha_{i, j}) =  n\beta_{f}^{k}x$ where $n = (n_{i, j}) \in \N(2N,  F)$, $x = (x_{i, j})  \in J_{f}$, and $k \in \mathbb{Z}$. Next, we set $x_{1, 1} = y_{1, 1}$, $x_{N+1,  1} = y_{N+1,  1} \varpi_{F}$, $x_{1,  N+1} = y_{1,  N+1} \varpi_{F}^{-1}$, and $x_{N+1,  N+1} = y_{N+1,  N+1} $ where $y_{i, j} \in \mathcal{O}_{F}$ from (\ref{Order}). These terms will aid in the proof of this lemma because if $y_{1, 1} \equiv u y_{1,  N+1} \mod{\mathcal{P}_{F}}$ and $y_{N+1,  1} \equiv u y_{N+1,  N+1} \mod{\mathcal{P}_{F}}$, then $\det(x) \in \mathcal{P}_{F}$. To see this, when we conjugate $x \in \mathfrak{A}_{2N}$ with $y_{1, 1} \equiv u y_{1,  N+1} \mod{\mathcal{P}_{F}}$ and $y_{N+1,  1} \equiv u y_{N+1,  N+1} \mod{\mathcal{P}_{F}}$ by $g_{f}^{-1}$ to get $x^{g_{f}^{-1}} \in \mathfrak{A}_{2}$, we have that the first $2 \times 2$ block on the diagonal satisfies
    \[
    \begin{pmatrix}
        x^{g_{f}^{-1}}_{1, 1} & x^{g_{f}^{-1}}_{1, 2} \\
        x^{g_{f}^{-1}}_{2, 1} & x^{g_{f}^{-1}}_{2, 2}
    \end{pmatrix}  = \begin{pmatrix}
        uy_{1,  N+1} & c^{-1}y_{1,  N+1} \\
        cuy_{N+1,  N+1} & y_{N+1,  N+1}
    \end{pmatrix};
    \]
    this implies $\det(x) \in \mathcal{P}_{F}$.

     %If $k = 0$, then $ \alpha = nx$. Since $h \in \N(N, F) \backslash \GL(N, F)$, $n_{i, j} = 0$ for $N < i < j \leq 2N$. Rewriting our equation as $n^{-1}\alpha = x$ and comparing $(n^{-1}\alpha)_{N+1 , 1}$ and $(n^{-1}\alpha)_{N+1 , N+1}$ with $x_{N+1 , 1}$ and $x_{N+1 , N+1}$ respectively, we see that $y_{N+1,  1} = u y_{N+1,  N+1}$. Further, we see from $(n^{-1}\alpha)_{1 , 1}$ and $(n^{-1}\alpha)_{1 , N+1}$ that $y_{1, 1} \equiv uy_{1,  N+1} \mod{\mathcal{P}_{F}}$ which implies $\det(x) \in \mathcal{P}_{F}$. Hence, $k \neq 0$.
    
     Suppose $k<-N$, then $n^{-1}\alpha = \beta_{f}^{k}x$. When we compare $(n^{-1}\alpha)_{1 , 1}$ and $(n^{-1}\alpha)_{1 , N+1}$ with $\left(\beta_{f}^{k}x\right)_{1,  1}$ and $\left(\beta_{f}^{k}x\right)_{1,  N+1}$ respectively, we have $\left(\beta_{f}^{k}x\right)_{1,  N+1} = (u\varpi_{F})^{-1}\left(\beta_{f}^{k}x\right)_{1, 1} + 1$. When $k < -N$, we have that $\left(\beta_{f}^{k}x\right)_{1,  N+1}$ and $(u\varpi_{F})^{-1}\left(\beta_{f}^{k}x\right)_{1, 1} \in \mathcal{P}_{F}$ which implies $k \geq -N$. 
    
    Now suppose $k \geq -N$. Then $n^{-1}\alpha = \beta_{f}^{k}x$. Let us recall the observations given before the proof of Lemma \ref{Lemma1}. From the second observation, let $r$ be the row of $\beta_{f}^{k}$ such that at least two of the entries $(\beta_{f}^{k})_{r,  1}$, $(\beta_{f}^{k})_{r,  N+1}$, $(\beta_{f}^{k})_{N+r,  1}$, and $(\beta_{f}^{k})_{N+r,  N+1}$ are non-zero with 
    \[
    \begin{pmatrix}
            (\beta_{f}^{k})_{r,  1} &(\beta_{f}^{k})_{r,  N+1} \\
            (\beta_{f}^{k})_{N+r,  1} & (\beta_{f}^{k})_{N+r,  N+1}
        \end{pmatrix} = \begin{pmatrix}
            A_{3}\varpi_{F}^{m} & cA_{1}\varpi_{F}^{m-1} \\
            A_{1}\varpi_{F}^{m+1} & A_{2}\varpi_{F}^{m}
        \end{pmatrix}
    \]
    where $m \leq 1$, $A_{i} \in \mathcal{O}_{F}$, and $cA_{1}^{2}-A_{2}A_{3} \in \mathcal{O}_{F}^{\times}$.

    From (\ref{Order}), we compare
    \[
    \begin{pmatrix}
        (n^{-1}\alpha)_{r, 1} & (n^{-1}\alpha)_{r,  N+1} \\ 
        (n^{-1}\alpha)_{N+r,  1} & (n^{-1}\alpha)_{N+r,  N+1}
    \end{pmatrix} = \begin{pmatrix}
        (\beta_{f}^{k}x)_{r, 1} & (\beta_{f}^{k}x)_{r,  N+1} \\ 
        (\beta_{f}^{k}x)_{N+r,  1} & (\beta_{f}^{k}x)_{N+r,  N+1}
    \end{pmatrix}
    \]
    and receive the following system of equations:
         \begin{align}\label{Linear}
             A_{3}  y_{1, 1} + cA_{1}  y_{N+1,  1} &= u
             \left(A_{3}  y_{1,  N+1} + cA_{1}  y_{N+1,  N+1} + \delta\left(r\right)\varpi_{F}^{1-m}\right) \\ \notag
             A_{1}  y_{1, 1} + A_{2}  y_{N+1,  1} &= u
            \left(A_{1}  y_{1,  N+1} + A_{2}  y_{N+1,  N+1}\right) 
         \end{align}
    where $\delta(r) = 1$ if either $\left(\beta_{f}^{k}\right)_{1, 1}$ or $\left(\beta_{f}^{k}\right)_{1,  N+1}$ is non-zero and zero otherwise. We can rewrite our system of equations as
    \[
    \begin{pmatrix}
            A_{3} & cA_{1} \\
            A_{1} & A_{2}
        \end{pmatrix}\begin{pmatrix}
            y_{1,  1} - uy_{1,  N+1} \\
            y_{N+1,  1} - uy_{N+1,  N+1}
        \end{pmatrix} = \begin{pmatrix}
            \delta\left(r\right)\varpi_{F}^{1-m} \\
            0
        \end{pmatrix}.
    \]
    Hence, an element of the form in (\ref{GL(2)}) is contained inside the support of $\mathcal{W}_{(f,  \, \chi,  \, \zeta)}$ if and only if $r = 1$ and $m=1$ if and only if $k = -N$.   
\end{proof}

From Lemma \ref{Lemma2}, we have that $n^{-1}\alpha =  \beta_{f}^{-N}x$ for some $n \in \N(2N,  F)$ and $x \in J_{f}$. Since $h \in \N(N, F) \backslash \GL(N, F)$, we may choose $n$ such that $n_{i, j} = 0$ for $N < i < j \leq 2N$. Looking at row $N+1$ of $n^{-1}\alpha$, we see that $\beta_{f}^{-N}x$ must satisfy $y_{1,  1} =  u  y_{1,  N+1} $. When we write $x = az$ for some $a = a_{0}c + a_{1}\sigma_{f} \in \mathcal{O}_{L_{f}}^{\times}$ and $z \in U^{1}$ as in (\ref{decomposition}), we see that $y_{1, 1} \in a_{0}c + \mathcal{P}_{F}$ and $y_{1,  N+1} \in a_{1}c + \mathcal{P}_{F}$. Hence, 
\begin{equation}\label{FirstCongruence}
    a_{0} \equiv a_{1}u \mod{\mathcal{P}_{F}}.
\end{equation}

Next, we solve the corresponding system of linear equations in (\ref{Linear}) and see that $y_{N+1,  1} = u(y_{N+1, N+1}-1)$ where $y_{N+1,  1} \in a_{1} + \mathcal{P}_{F}$ and $y_{N+1,  N+1} \in (a_{0}c + a_{1}d) + \mathcal{P}_{F}$ from (\ref{decomposition}). This equality along with (\ref{FirstCongruence}) imply
\[
    a_{1} \equiv a:= \frac{u}{cu^{2}+du-1} \mod{\mathcal{P}_{F}}.
\]
Moreover, since the bottom left $N \times N$ block of $n^{-1}\alpha$ is equal to $h$, (\ref{decomposition}) also tells us that $h \in au\varpi_{F}^{2}  U^{1}\left(\mathfrak{I}_{N}\right)$. 
 
The above implies that we may factor from $\widetilde{\Psi}\left(1-s; \rho(w_{2N, N})\widetilde{R(g_{u})  \mathcal{W}}_{(f,  \, \chi,  \, \zeta)}, \widetilde{\mathcal{W}}_{\left(u,  \, \phi,  \, \zeta'\right)}\right)$ the terms $\zeta^{-N}$, $\chi\left(acu+a\sigma_{f}\right)$, and $\omega_{\left(u,  \, \phi,  \, \zeta'\right)}\left(au\varpi_{F}^{2}\right)$. We then see that the remaining integral is a non-zero rational function $Q\left(q_{F}^{-s}\right) \in \mathbb{C}\left(q_{F}^{-s}\right)$ that does not depend on $\omega_{\left(u,  \, \phi,  \, \zeta'\right)}$, $\chi$, or $\zeta$. From (\ref{GammaSimple}), 
\begin{align*}
    &\gamma\left(s, \pi_{(f,  \, \chi,  \, \zeta)} \times \pi_{\left(u,  \, \phi,  \, \zeta'\right)}, \psi_{F}\right) \\
    &= \zeta^{-N}\chi(acu + a\sigma_{f}) \, \omega_{\left(u,  \, \phi,  \, \zeta'\right)}(au\varpi_{F}^{2})  P\left(q_{F}^{-s}\right)
\end{align*}
where $P\left(q_{F}^{-s}\right) \in \mathbb{C}\left(q_{F}^{-s}\right)$ is a non-zero rational function that does not depend on $\omega_{\left(u,  \, \phi,  \, \zeta'\right)}$, $\chi$, or $\zeta$. 
\end{proof}

Proposition $\ref{Proposition5}$ now gives us enough information to distinguish between middle supercuspidal representations. Let $a_{i} := \frac{u}{c_{i}u^{2}+d_{i}u-1}$.

\begin{proposition}\label{Proposition6}
    Suppose $\gamma(s, \pi_{(f_{1},  \, \chi_{1},  \, \zeta_{1})} \times \tau, \psi_{F}) = \gamma(s, \pi_{(f_{2}, \, \chi_{2},  \, \zeta_{2})} \times \tau, \psi_{F})$ for any $\tau \in \Xi_{\text{middle}}$, $\tau$ a tamely ramified quasi-character, or $\tau$ a simple supercuspidal representation of $\GL(N, F)$. Then $\pi_{(f_{1},  \, \chi_{1},  \, \zeta_{1})} \cong \pi_{(f_{2}, \, \chi_{2},  \, \zeta_{2})}$. 
\end{proposition}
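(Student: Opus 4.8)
The plan is to recover the three parameters of each middle supercuspidal one at a time and then quote the bijection of Proposition \ref{Bijection}. Write $f_i = X^2 - d_i X - c_i$. First I use the twists by tamely ramified quasi-characters of $F^\times$: by Proposition \ref{Proposition3} the hypothesis already forces $\zeta_1 = \zeta_2$ and $c_1 \equiv c_2 \pmod{\mathcal{P}_F}$, so only $\bar f$ (equivalently $d_i\bmod\mathcal{P}_F$) and the characters $\chi_i$ remain to be matched.

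Next I feed the twists by simple supercuspidals $\pi_{(u,\phi,\zeta')}$ of $\GL(N,F)$ into Proposition \ref{Proposition5}. Fix $u\in\mathcal{O}_F^\times$ and let $(\phi,\zeta')$ vary. Since, for a supercuspidal-by-supercuspidal pair, the gamma factor is a monomial in $q_F^{-s}$ (Theorem \ref{Conductor}), equating the two twisted gamma factors and comparing leading coefficients turns the hypothesis into an identity of scalars; the factor $\omega_{(u,\phi,\zeta')}(\varpi_F)^2$ cancels, $\zeta_1=\zeta_2$ cancels the $\zeta^{-N}$'s, and what remains has the shape $\chi_1(z_1)\,\phi(\bar b_1)\,P_1=\chi_2(z_2)\,\phi(\bar b_2)\,P_2$, where $z_i=\overline{a_i(u)(c_iu+\sigma_{f_i})}$, $\bar b_i=\overline{a_i(u)u}\in k_F^\times$ with $a_i(u)=u/(c_iu^2+d_iu-1)$, and $P_i\in\mathbb{C}^\times$ does not depend on $\phi$ or $\zeta'$. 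Letting $\phi$ run over all characters of $k_F^\times$ forces $\bar b_1=\bar b_2$, i.e. $a_1(u)\equiv a_2(u)\pmod{\mathcal{P}_F}$; since this holds for every $u\in\mathcal{O}_F^\times$, unwinding gives $(\bar c_1-\bar c_2)u+(\bar d_1-\bar d_2)=0$ in $k_F$ for all $u\in k_F^\times$, which together with $\bar c_1=\bar c_2$ (or, when $q_F=2$, the uniqueness of the irreducible quadratic over $\mathbb{F}_2$) gives $\bar f_1=\bar f_2=:\bar f$.

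Now I use the remaining twists. By Corollary \ref{CorollaryCentral} (applied with $\tau$ running over $\Xi_{\text{middle}}$) the central characters agree, and since $\omega_{(f,\chi,\zeta)}$ restricts on $\mathcal{O}_F^\times$ to the inflation of $\chi|_{k_F^\times}$, this says $\eta:=\chi_1\chi_2^{-1}$ (a character of $k_{\bar f}^\times$, now that $\bar f_1=\bar f_2$) is trivial on $k_F^\times$. Meanwhile, with $\bar f_1=\bar f_2$ the two middle supercuspidals arise from a common simple stratum (proof of Proposition \ref{Bijection}), so the constant $P$ of Proposition \ref{Proposition5} is the same on both sides and $\phi(\bar b_1)=\phi(\bar b_2)$; the identity of the previous paragraph then collapses to $\eta(z)=1$ for all $u\in\mathcal{O}_F^\times$, where $z=\overline{a(u)(cu+\sigma_f)}$. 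Using the norm relation $\mathrm{N}_{L_f/F}(cu+\sigma_f)=c\,(cu^2+du-1)$ one rewrites $z=\bar cu\,(\bar cu+\bar\sigma^\tau)^{-1}$, where $\bar\sigma,\bar\sigma^\tau$ are the roots of $\bar f$ in $k_{\bar f}$, so that $\eta(\bar cu+\bar\sigma^\tau)=1$ for all $u\in k_F^\times$. Passing to $k_{\bar f}^\times/k_F^\times$, cyclic of order $q_F+1$, the classes of the $q_F-1$ elements $\bar cu+\bar\sigma^\tau$ are pairwise distinct and nontrivial, so $\eta$ vanishes on a subgroup of order $\ge q_F$; as $q_F\ge2$ that subgroup is everything, whence $\eta=1$ and $\chi_1=\chi_2$. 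With $\zeta_1=\zeta_2$, $\bar f_1=\bar f_2$, $\chi_1=\chi_2$, Proposition \ref{Bijection} gives $\pi_{(f_1,\chi_1,\zeta_1)}\cong\pi_{(f_2,\chi_2,\zeta_2)}$.

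The main obstacle is the passage from equality of the rational functions $\gamma(s,\pi_{(f_i,\chi_i,\zeta_i)}\times\tau,\psi_F)$ to the scalar identities: one must verify that the conductors match and that, in Proposition \ref{Proposition5}, the auxiliary function $P$ depends only on $(\bar f,u)$ — not on $\chi$, $\zeta$, $\phi$, or $\zeta'$ — so that fixing a common $\bar f$ really forces $P_1=P_2$; the $\phi$-variation must also be organized so the argument of $\chi_i$ and the argument of $\phi$ separate cleanly. Once the central-character input has pushed the problem into the cyclic quotient $k_{\bar f}^\times/k_F^\times$ the finite-field step is short, but the degenerate case $q_F=2$ (where $k_F^\times$ is trivial and the $\phi$-variation carries no information) deserves a separate, easy, check.
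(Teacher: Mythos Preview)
Your proof is correct and follows the paper's approach: Proposition~\ref{Proposition3} for $(\zeta,\bar c)$, then the $\phi$-variation in Proposition~\ref{Proposition5} for $\bar d$ (hence $\bar f$), and finally the same simple-supercuspidal identity again for $\chi$, with Proposition~\ref{Bijection} to finish. The only cosmetic difference is in the last step: the paper concludes $\chi_1=\chi_2$ by asserting that $k_{L_f}^\times$ is generated by the elements $1+u'\bar\sigma$, whereas you first use Corollary~\ref{CorollaryCentral} to obtain $\chi_1|_{k_F^\times}=\chi_2|_{k_F^\times}$ and then count in the cyclic quotient $k_{\bar f}^\times/k_F^\times$; both routes are short, and your flags about $q_F=2$ and about the dependence of $P$ on $(\bar f,u)$ are well taken and consistent with the proof of Proposition~\ref{Proposition5}.
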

\begin{proof}
    From Corollary \ref{CorollaryCentral}, if $\pi_{(f_{1},  \, \chi_{1},  \, \zeta_{1})}$ and $\pi_{(f_{2}, \, \chi_{2},  \, \zeta_{2})}$ share the same twisted gamma factors with any $\tau \in \Xi_{\text{middle}}$, then $\omega_{(f_{1},  \, \chi_{1},  \, \zeta_{1})} = \omega_{(f_{2}, \, \chi_{2},  \, \zeta_{2})}$. Furthermore, Proposition \ref{Proposition3} tells us that $\zeta_{1} = \zeta_{2}$ and $c_{1} \equiv c_{2} \mod{\mathcal{P}_{F}}$. 

    Suppose $\gamma(s, \pi_{(f_{1}, \, \chi_{1}, \, \zeta)} \times \pi_{\left(u,  \, \phi,  \, \zeta'\right)}, \psi_{F}) = \gamma(s, \pi_{(f_{2}, \, \chi_{2}, \, \zeta)} \times \pi_{\left(u,  \, \phi,  \, \zeta'\right)}, \psi_{F})$ for all simple supercuspidal representations $\pi_{\left(u,  \, \phi,  \, \zeta'\right)}$. Then Proposition \ref{Proposition5} tells us that   
    \begin{align}\label{MiddleContradiction}
       & \chi_{1}(a_{1}cu + a_{1}\sigma_{f_{1}}) \, \omega_{\left(u,  \, \phi,  \, \zeta'\right)}(a_{1}u\varpi_{F}^{2})  P_{1}\left(q_{F}^{-s}\right)  \notag \\
        & = \chi_{2}(a_{2}cu + a_{2}\sigma_{f_{2}}) \, \omega_{\left(u,  \, \phi,  \, \zeta'\right)}(a_{2}u\varpi_{F}^{2})  P_{2}\left(q_{F}^{-s}\right)
    \end{align}
    where $P_{i}$ is a non-zero rational function in $\mathbb{C}\left(q_{F}^{-s}\right)$ that does not depend on $\omega_{\left(u,  \, \phi,  \, \zeta'\right)}$ or $\chi_{i}$. Set $v := a_{1}a_{2}^{-1}$.
     
    Equation (\ref{MiddleContradiction}) implies $\omega_{\left(u,  \, \phi,  \, \zeta'\right)}(v) = P\left(q_{F}^{-s}\right)$ where $P\left(q_{F}^{-s}\right)$ is a non-zero rational function that does not depend on $\omega_{\left(u,  \, \phi,  \, \zeta'\right)}$. Fix some $s' \in \mathbb{C}$. We recall that the central character $\omega_{\left(u,  \, \phi,  \, \zeta'\right)}$ of a simple supercuspidal representation when restricted to $\mathcal{O}_{F}^{\times}$ is the inflation of a quasi-character of $k_{F}^{\times}$. Since $\omega_{\left(u,  \, \phi,  \, \zeta'\right)}$ is allowed to be any central character, we have that $\phi(\overline{v}) = P\left(q_{F}^{-s'}\right)$ for all quasi-characters $\phi$ of $k_{F}^{\times}$ where $\overline{v}$ is the image of $v$ in $k_{F}^{\times}$. This implies $v \in 1 + \mathcal{P}_{F}$ and $d_{1} \equiv d_{2} \mod{\mathcal{P}_{F}}$. 

    If $d_{1} \equiv d_{2} \mod{\mathcal{P}_{F}}$, then $P_{1} = P_{2}$. From (\ref{MiddleContradiction}),  
    \[
    \chi_{1}\left(1 + (cu)^{-1}\sigma_{f}\right) = \chi_{2}\left(1 + (cu)^{-1}\sigma_{f}\right)
    \]
    for all $u \in \mathcal{O}_{F}^{\times}$. Hence, $\chi_{1} = \chi_{2}$ as $\mathcal{O}_{L_{f}}^{\times}$ is generated by elements of the form $1 + u'\sigma_{f}$ with $u' \in \mathcal{O}_{F}^{\times}$. 
\end{proof}

\section{Alternate supercuspidal representations of depth $\frac{1}{N}$}\label{SectionAlternate}

In this section, we use twisted gamma factors to distinguish middle supercuspidals from other depth $\frac{1}{N}$ supercuspidal representations of $\GL(2N, F)$. We use conductor formulas in \cite{BHK} of $f(\tau_{1} \times \tau_{2}, \psi_{F})$ attached to a pair of irreducible supercuspidal representations $\tau_{1}$ and $\tau_{2}$ via the notion of supercuspidal representations being \textit{completely distinct}. To use these conductor formulas, we recall the characteristic polynomial $\phi_{Y_{\beta}}(X) \in k_{F}[X]$ of a simple stratum defined in subsection \ref{MaximalSimpleTypes}. We further recall that $\phi_{Y_{\beta}}(X)$ is a power of the minimal polynomial $\phi_{\pi}(X)\in k_{F}[X]$ of the simple stratum which is a monic irreducible polynomial such that $\phi_{\pi}(X) \neq X$.

\subsection{Conductor formulas}\label{ConductorFormulas}

We begin this subsection with the following definition of irreducible supercuspidal representations being \textit{completely distinct}:
\begin{definition}
    \normalfont Let $\pi_{1}$ and $\pi_{2}$ be irreducible supercuspidal representations of $\GL(n, F)$. We say that the $\pi_{i}$ are \textit{completely distinct} if either $d(\pi_{1}) \neq d(\pi_{2})$ or $d(\pi_{1}) = d(\pi_{2})$ but $\phi_{\pi_{1}} \neq \phi_{\pi_{2}}$. 
\end{definition}

Let us consider all possible depth $\frac{1}{N}$ supercuspidal representations of $\GL(2N, F)$ that are not minimax, i.e. not middle supercuspidal representations, and compute the characteristic polynomials of their corresponding simple strata. We refer to such supercuspidal representations as \textit{supercuspidal representations not of middle type}.
\begin{enumerate}
    \item Let $\rho$ be a depth $\frac{1}{N}$ supercuspidal representation of $\GL(2N, F)$ induced from the simple stratum $\left[\mathfrak{A}_{2}, 1, 0, \beta \right]$ where $E = F\left[\beta\right]$ is a degree $N$ totally ramified extension. Then the characteristic polynomial $\phi_{Y_{\beta}}(X)$ of $\left[\mathfrak{A}_{2}, 1, 0, \beta \right]$ is the characteristic polynomial of $Y_{\beta} = \beta^{N}\varpi_{F} \in A$ with coefficients reduced modulo $\mathcal{P}_{F}$. Hence, $\phi_{\rho}(X)$ is a linear polynomial $X - u$ for some $u \in k_{F}^{\times}$.
    
    \item Let $\rho'$ be a depth $\frac{1}{N}$ supercuspidal representation of $\GL(2N, F)$ induced from the simple stratum $\left[\mathfrak{I}_{2N}, 2, 0, \beta' \right]$ where $E' = F\left[\beta'\right]$ is a degree $2N$ totally ramified extension. Since $\beta'$ here is not minimal over $F$, we have a non-trivial defining sequence associated to $\left[\mathfrak{I}_{2N}, 2, 0, \beta' \right]$:
    \[
    \left[\mathfrak{I}_{2N}, 2, 0, \beta' \right] \to \left[\mathfrak{I}_{2N}, 2, 1, \beta_{1}'\right]
    \]
    where $F\left[\beta'_{1}\right]$ is a degree $N$ totally ramified extension with $\beta'_{1}$ minimal over F and $\left[\mathfrak{I}_{2N}, 2, 1, \beta_{1}'\right]$ fundamental.
    
    Then the characteristic polynomial $\phi_{Y_{\beta'}}(X)$ of $\left[\mathfrak{I}_{2N}, 2, 0, \beta' \right]$ is the characteristic polynomial of $Y_{\beta_{1}'}=(\beta_{1}')^{N}\varpi_{F} \in A$ with coefficients reduced modulo $\mathcal{P}_{F}$. Hence, $\phi_{\rho'}(X)$ is a linear factor of the form $X - u$ for some $u \in k_{F}^{\times}$.
\end{enumerate}

Next, we prove the following lemma which separates middle supercuspidals from supercuspidals not of middle type in terms of complete distinction with simple supercuspidals of $\GL(N, F)$.
\begin{lemma}\label{Lemma4}
    Let $\rho$ be a depth $\frac{1}{N}$ supercuspidal representation of $\GL(2N, F)$ that is not of middle type. Then there exists a simple supercuspidal representation $\pi_{(u,  \, \phi,  \, \zeta')}$ of $\GL(N, F)$ such that $\pi_{(u,  \, \phi,  \, \zeta')}$ is not completely distinct from $\rho$. On the other hand, $\pi_{(f,  \, \chi,  \, \zeta)}$ is completely distinct from $\pi_{(u,  \, \phi,  \, \zeta')}$ for all $u \in \mathcal{O}_{F}^{\times}$. 
\end{lemma}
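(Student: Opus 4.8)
The plan is to reduce everything to a comparison of the minimal polynomials $\phi_\pi$ attached to the defining simple strata. The point is that all three kinds of representation occurring in the statement — simple supercuspidal representations of $\GL(N,F)$, the middle supercuspidal $\pi_{(f,\chi,\zeta)}$ of $\GL(2N,F)$, and a depth $\tfrac1N$ supercuspidal $\rho$ of $\GL(2N,F)$ not of middle type — have depth exactly $\tfrac1N$. Hence in each comparison below the two representations are \emph{not} completely distinct precisely when their minimal polynomials over $k_F$ coincide, and the lemma reduces to computing these polynomials.

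First I would record the minimal polynomial of a simple supercuspidal $\pi_{(u,\phi,\zeta')}$ of $\GL(N,F)$, which is induced from the simple stratum $[\mathfrak{I}_N,1,0,\beta_u]$. Here $e(\mathfrak{I}_N)=N$ and $m=1$, so $g=\gcd(N,1)=1$ and $Y_{\beta_u}=\beta_u^{N}\varpi_F$. Reading off the cyclic shape of $\beta_u$ gives $\beta_u^{N}=(u\varpi_F)^{-1}I_N$, whence $Y_{\beta_u}=u^{-1}I_N$ and $\Phi_{Y_{\beta_u}}(X)=(X-u^{-1})^{N}$; reducing modulo $\mathcal{P}_F$ yields $\phi_{Y_{\beta_u}}(X)=(X-\bar u^{-1})^{N}$, so the minimal polynomial is $\phi_{\pi_{(u,\phi,\zeta')}}(X)=X-\bar u^{-1}$, of degree~$1$, where $\bar u\in k_F^{\times}$ is the residue of $u$. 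As $u$ ranges over $\mathcal{O}_F^{\times}$ the residue $\bar u^{-1}$ ranges over all of $k_F^{\times}$.

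For the first assertion I would then use that, $\rho$ not being of middle type, the case analysis preceding the lemma gives $\phi_\rho(X)=X-\bar v$ for some $\bar v\in k_F^{\times}$ — this holds whether $\rho$ comes from a totally ramified extension of degree $N$ (case~(1)) or of degree $2N$ (case~(2)). Taking $u\in\mathcal{O}_F^{\times}$ with $\bar u=\bar v^{-1}$ and any $\phi,\zeta'$, one gets $\phi_{\pi_{(u,\phi,\zeta')}}=X-\bar v=\phi_\rho$, and since $d(\rho)=d(\pi_{(u,\phi,\zeta')})=\tfrac1N$ the pair is not completely distinct. For the second assertion, the proof of Proposition~\ref{Bijection} gives $\phi_{\pi_{(f,\chi,\zeta)}}(X)=\bar f(X)$, irreducible of degree~$2$, while $\phi_{\pi_{(u,\phi,\zeta')}}(X)=X-\bar u^{-1}$ has degree~$1$; being of different degrees these polynomials differ for every $u\in\mathcal{O}_F^{\times}$, so — the depths again coinciding — $\pi_{(f,\chi,\zeta)}$ and $\pi_{(u,\phi,\zeta')}$ are completely distinct.

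The steps needing a little care, and the nearest thing to an obstacle, are the verification that cases~(1) and~(2) exhaust the depth $\tfrac1N$ supercuspidals of $\GL(2N,F)$ not of middle type (handled above: maximality of the embedded order forces $e(\mathfrak{A})=e(E|F)$, and $N\mid e(E|F)\mid 2N$ then forces $e(E|F)\in\{N,2N\}$) and the matrix identity $\beta_u^{N}=(u\varpi_F)^{-1}I_N$; both are routine, so I anticipate no genuine difficulty.
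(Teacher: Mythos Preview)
Your proposal is correct and follows essentially the same approach as the paper: compute the minimal polynomials $\phi_{\pi_{(u,\phi,\zeta')}}(X)=X-\bar u^{-1}$ and $\phi_{\pi_{(f,\chi,\zeta)}}(X)=\bar f(X)$, then invoke the case analysis preceding the lemma to see that $\phi_\rho$ is linear and match it by an appropriate choice of $u$. Your additional remarks on the exhaustiveness of cases~(1) and~(2) and the identity $\beta_u^{N}=(u\varpi_F)^{-1}I_N$ are extra care rather than a different route.
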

\begin{proof}
    We first compute $\phi_{\pi_{(f,  \, \chi,  \, \zeta)}}$ and $\phi_{\pi_{(u,  \, \phi,  \, \zeta')}}$.
    \begin{enumerate}
    \item $\phi_{\pi_{(f,  \, \chi,  \, \zeta)}}$: From the proof of Proposition \ref{Bijection}, we see that $\phi_{\pi_{(f,  \, \chi,  \, \zeta)}}(X) = \bar{f}$ which is a monic degree two irreducible polynomial. 
    
    \item $\phi_{\pi_{(u,  \, \phi,  \, \zeta')}}$: Associated with $\pi_{(u,  \, \phi,  \, \zeta')}$ is the simple stratum $\left[\mathfrak{I}_{N}, 1, 0, \beta_{u}\right]$ given in subsection \ref{SimpleSupercuspidalRepresentations} which is fundamental from \cite[(2.3.2)]{BK}. Hence, we may set 
    \[
    Y_{\beta_{u}} = \beta_{u}^{N}\varpi_{F} \in \Mat_{N \times N}(F).
    \]
    This implies $\phi_{\pi_{(u,  \, \phi,  \, \zeta')}}(X) = X-u^{-1}$ when we reduce the characteristic polynomial of $Y_{\beta_{u}}$ modulo $\mathcal{P}_{F}$ (here we view $u^{-1}$ via its image under $\mathcal{O}_{F}^{\times} \twoheadrightarrow k_{F}^{\times}$). 
\end{enumerate}

The computation above implies the second assertion. For the first assertion, the discussion preceding Lemma \ref{Lemma4} tells us that the characteristic polynomial of $\rho$ is a power of a monic linear term $X-v$ with $v \in k_{F}^{\times}$. Choosing the simple supercuspidal representation $\pi_{(v^{-1}, \, \phi, \, \zeta)}$, we complete the proof. 
\end{proof}

From Lemma \ref{Lemma4} and noting that supercuspidals not of middle type are not unramified twists of $\pi_{(u,  \, \phi,  \, \zeta')}$, we obtain the following proposition. 
\begin{proposition}\label{Proposition8}
    Let $\rho$ be a supercuspidal representation not of middle type and $\pi_{(u,  \, \phi,  \, \zeta')}$ a simple supercuspidal representation of $\GL(N, F)$ not completely distinct from $\rho$. Then 
    \[
    \gamma\left(s, \pi_{(f,  \, \chi,  \, \zeta)} \times \pi_{(u,  \, \phi,  \, \zeta')}^{\vee}, \psi_{F}\right) \neq \gamma\left(s, \rho \times \pi_{(u,  \, \phi,  \, \zeta')}^{\vee}, \psi_{F}\right).
    \] 
\end{proposition}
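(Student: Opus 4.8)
The plan is to turn the claimed inequality of gamma factors into an inequality of conductor exponents, which are then read off from the explicit Rankin--Selberg conductor formula of \cite{BHK}. By Theorem~\ref{Conductor}, for any irreducible supercuspidal $\sigma_{1}$ of $\GL(2N,F)$ and $\sigma_{2}$ of $\GL(N,F)$ one has $\gamma(s,\sigma_{1}\times\sigma_{2},\psi_{F})=c\,q_{F}^{-f(\sigma_{1}\times\sigma_{2},\psi_{F})s}$ for some $c\in\mathbb{C}^{\times}$; since two monomials in $q_{F}^{-s}$ with different exponents are distinct as functions of $s$, it suffices to prove
\[
f\bigl(\pi_{(f,\,\chi,\,\zeta)}\times\pi_{(u,\,\phi,\,\zeta')}^{\vee},\psi_{F}\bigr)\ \neq\ f\bigl(\rho\times\pi_{(u,\,\phi,\,\zeta')}^{\vee},\psi_{F}\bigr).
\]
Note first that $\pi_{(u,\,\phi,\,\zeta')}^{\vee}$ is again an irreducible supercuspidal representation of $\GL(N,F)$ of depth $\tfrac1N$, so Theorem~\ref{Conductor} legitimately applies to both pairs.

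The mechanism is the dichotomy supplied by \cite{BHK}: for irreducible supercuspidals $\sigma_{1}$ of $\GL(n_{1},F)$ and $\sigma_{2}$ of $\GL(n_{2},F)$, the conductor exponent $f(\sigma_{1}\times\sigma_{2},\psi_{F})$ attains its generic value $n_{1}n_{2}\max\bigl(d(\sigma_{1}),d(\sigma_{2})\bigr)$ precisely when $\sigma_{1}$ and $\sigma_{2}^{\vee}$ are completely distinct, and it is strictly smaller when they are not (agreement of depths and of minimal polynomials forces cancellation in the wild part of the tensor-product Galois parameter, lowering the slope of some of its constituents). I would apply this with $\sigma_{2}=\pi_{(u,\,\phi,\,\zeta')}^{\vee}$, so that the relevant comparison representation $\sigma_{2}^{\vee}$ is $\pi_{(u,\,\phi,\,\zeta')}$ itself.

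Now Lemma~\ref{Lemma4} tells us that $\pi_{(f,\,\chi,\,\zeta)}$ is completely distinct from $\pi_{(u,\,\phi,\,\zeta')}$: both have depth $\tfrac1N$, but $\phi_{\pi_{(f,\,\chi,\,\zeta)}}=\bar{f}$ is monic irreducible of degree two whereas $\phi_{\pi_{(u,\,\phi,\,\zeta')}}$ is linear. Hence $f\bigl(\pi_{(f,\,\chi,\,\zeta)}\times\pi_{(u,\,\phi,\,\zeta')}^{\vee},\psi_{F}\bigr)$ equals the generic value $2N\cdot N\cdot\tfrac1N=2N$. On the other hand, the hypothesis is exactly that $\rho$ is \emph{not} completely distinct from $\pi_{(u,\,\phi,\,\zeta')}$; therefore $f\bigl(\rho\times\pi_{(u,\,\phi,\,\zeta')}^{\vee},\psi_{F}\bigr)<2N$. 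The two conductor exponents differ, so the two gamma factors are distinct functions of $s$, as claimed.

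The step I expect to be the main obstacle is the faithful use of the \cite{BHK} conductor formula: one must check that ``completely distinct'' in the sense used here --- unequal depths, or equal depths with unequal minimal polynomials --- is indeed the exact combinatorial condition under which the Rankin--Selberg conductor is generic, and, more delicately, that at the small depth $\tfrac1N$ its failure strictly decreases the conductor rather than leaving it unchanged. This is where the explicit simple strata $[\mathfrak{A}_{2N},1,0,\beta_{f}]$, $[\mathfrak{A}_{2},1,0,\beta]$, $[\mathfrak{I}_{2N},2,0,\beta']$ and $[\mathfrak{I}_{N},1,0,\beta_{u}]$ and the characteristic polynomials computed in Section~\ref{SectionAlternate} and Subsection~\ref{SimpleSupercuspidalRepresentations} are brought to bear.
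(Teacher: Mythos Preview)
Your approach is essentially the same as the paper's: reduce the inequality of gamma factors to an inequality of conductor exponents via Theorem~\ref{Conductor}, then invoke the conductor formula of \cite[Theorem~6.5~(ii),~(iii)]{BHK} together with Lemma~\ref{Lemma4} to separate the completely distinct case (generic value) from the not-completely-distinct case (strictly smaller). The only cosmetic difference is that you work with the $\psi_{F}$-normalized conductor $f(\,\cdot\,,\psi_{F})$ and obtain the generic value $2N$, whereas the paper uses the unnormalized $f(\,\cdot\,)$ and obtains $2N^{2}(1+\tfrac1N)$; these agree after the shift $f(\pi_{1}\times\pi_{2},\psi_{F})=f(\pi_{1}\times\pi_{2})-2N^{2}$ recorded after Theorem~\ref{TheoremConductor}.
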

\begin{proof}
From Theorem \ref{Conductor} in subsection \ref{TwistedLocalFactors}, we may rewrite the twisted gamma factors $\gamma\left(s, \pi_{(f,  \, \chi,  \, \zeta)} \times \pi_{(u,  \, \phi,  \, \zeta')}^{\vee}, \psi_{F}\right)$ and $\gamma\left(s, \rho \times \pi_{(u,  \, \phi,  \, \zeta')}^{\vee}, \psi_{F}\right)$ in terms of the conductors $f\left(\pi_{(f,  \, \chi,  \, \zeta)} \times \pi_{(u,  \, \phi,  \, \zeta')}^{\vee}\right)$ and $f\left(\rho \times \pi_{(u,  \, \phi,  \, \zeta')}^{\vee}\right)$ respectively. Since $\pi_{(u,  \, \phi,  \, \zeta')}$ is not completely distinct from $\rho$ but is completely distinct from $\pi_{(f,  \, \chi,  \, \zeta)}$, we get from \cite[Theorem 6.5 (ii), (iii)]{BHK} that
\[
f\left(\rho \times \pi_{(u,  \, \phi,  \, \zeta')}^{\vee}\right) < 2N^{2}\left(1+ \frac{1}{N}\right) = f\left(\pi_{(f,  \, \chi,  \, \zeta)} \times \pi_{(u,  \, \phi,  \, \zeta')}^{\vee}\right).
\]
Hence, this implies the proposition. 
\end{proof} 
\begin{remark}
\normalfont
    We note that since $\pi_{(u,  \, \phi,  \, \zeta')}$ is a simple supercuspidal representation of $\GL(N, F)$, its contragredient $\pi_{(u,  \, \phi,  \, \zeta')}^{\vee}$ is also a simple supercuspidal representation of $\GL(N, F)$. 
\end{remark}

\subsection{Proof of Theorem \ref{TheoremMain}}

From the results given in section \ref{SectionDistinguishing} and subsection \ref{ConductorFormulas}, we give the proof of Theorem \ref{TheoremMain} which uniquely determines middle supercuspidal representations via its twisted gamma factors. 

\begin{proof}[Proof of Theorem \ref{TheoremMain}]
    Let $\pi$ be a middle supercuspidal representation and $\pi'$ an irreducible supercuspidal representation of $\GL(2N, F)$ such that their twisted gamma factors satisfy the hypothesis of Theorem \ref{TheoremMain}. From Proposition \ref{Xu}, we have $d(\pi') = \frac{1}{N}$. The proof now follows from Propositions \ref{Proposition6} and \ref{Proposition8}.
\end{proof}

 \newcommand{\etalchar}[1]{$^{#1}$}


\begin{thebibliography}{99}

\bibitem{A23}
M. Adrian, \textit{On the sharpness of the bound for the local converse theorem of $p$-adic $\GL_{N}$, general $N$}, preprint, arXiv:2303.08656v2, 2023.

\bibitem{Adrian-Liu}
M. Adrian and B. Liu, \textit{Some results on simple supercuspidal representations of $\GL_{n}(F)$}, J. Number Theory \textbf{160} (2016), p. 117--147.

\bibitem{Adrian1}
M. Adrian, B. Liu, S. Stevens, and G. K.-F. Tam, \textit{On the sharpness of the bound for the local converse theorem of $p$-adic $\GL_{prime}$}, Proc. Amer. Math. Soc. Ser. B \textbf{5} (2018), p. 6--17.

\bibitem{Adrian2}
M. Adrian, B. Liu, S. Stevens, and P. Xu, \textit{On the Jacquet conjecture on the local converse problem for $p$-adic $\GL_{N}$}. Represent. Theory \textbf{20} (2016), p. 1--13.

\begin{comment}
    \bibitem{BF}
C. J. Bushnell and A. Fr\"{o}hlich, \textit{Gauss Sums and $p$-Adic Division Algebras}, Lecture Notes in Math., vol. 987, Springer-Verlag, Berlin, New York, p. xi+187, 1983.
\end{comment}


    \bibitem{BH-1}
C. J. Bushnell and G. Henniart, \textit{Langlands parameters for epipelagic representations of $\GL_{n}$}, Math. Ann. \textbf{358} (1–2) (2014) p. 433--463.




\begin{comment}
\bibitem{BH-11}
C. J. Bushnell and G. Henniart, \textit{Local tame lifting for $\GL(N)$. I. Simple characters}, Inst. Hautes \'{E}tudes Sci. Publ. Math. \textbf{83} (1996), p. 105--233.
\end{comment}



\bibitem{BH-2}
C. J. Bushnell and G. Henniart, \textit{Supercuspidal Representations of $\GL_{n}$: Explicit Whittaker
Functions}. J. Algebra \textbf{209} (1998), no. 1, p. 270--287.

\begin{comment}
    \bibitem{BH-3}
C. J. Bushnell and G. Henniart, \textit{The essentially tame local Langlands correspondence, I}, J. Amer. Math. Soc. \textbf{18} (2005), no. 3, p. 685--710.
\end{comment}

\begin{comment}
    \bibitem{BH-5}
C. J. Bushnell and G. Henniart, \textit{The essentially tame local Langlands correspondence, III: the general case}, Proc. Lond. Math. Soc. (3) \textbf{101} (2010), no. 2, p. 497--553.
\end{comment}


\begin{comment}
    \bibitem{BH-4}
C. J. Bushnell and G. Henniart, \textit{The Local Langlands Conjecture for $\GL(2)$}, vol. 335, Grundlehren der Mathematischen Wissenschaften, Springer-Verlag, Berlin, p. xii+347, 2006.
\end{comment}

\begin{comment}
\bibitem{BHMem}
C. J. Bushnell and G. Henniart, \textit{To an effective local Langlands correspondence}, Mem. Amer. Math. Soc. \textbf{231} (2014), no. 1087, p. v+88.
\end{comment}

\bibitem{BHK}
C. J. Bushnell, G. Henniart, and P. C. Kutzko, \textit{Local Rankin–Selberg convolutions for $\GL_{n}$: explicit conductor formula}, J. Amer. Math. Soc. \textbf{11} (1998), p. 703--730.

\bibitem{BK}
C. J. Bushnell and P. C. Kutzko, \textit{The admissible dual of $\GL_{N}$ via restriction to compact open subgroups}, Ann. of Math. Studies, vol. 129, Princeton University Press, 1993.

\bibitem{Chai}
J. Chai, \textit{Bessel functions and local converse conjecture of Jacquet}, J. Eur. Math. Soc. \textbf{21} (2019), no. 6, p. 1703--1728.

\begin{comment}
\bibitem{Chen}
J. P. J. Chen, \textit{The $n \times (n-2)$ local converse theorem for $\GL(n)$ over a $p$-adic field}, J. Number Theory \textbf{120} (2006), no. 2, p. 193--205.
\end{comment}

\bibitem{Cogdell0}
J. W. Cogdell, \textit{Notes on $L$-functions for $\GL_{n}$, in: Lectures Given at the School of Automorphic Forms on $\GL(n)$}, Trieste, 2000.

\begin{comment}
\bibitem{Cogdell}
J. W. Cogdell and I. Piatetski-Shapiro, \textit{Converse theorems for $\GL_{n}$, II}, J. Reine Angew. Math. \textbf{507} (1999), p. 165--188.
\end{comment}

\begin{comment}
\bibitem{Conley}
W. Conley, \textit{Inertial types and automorphic representations with prescribed ramification}, Ph.D. thesis, University of California, Los Angeles, 2010.
\end{comment}

\bibitem{Deligne}
P. Deligne and G. Henniart, \textit{Sur la variation, par torsion, des constantes locales d'\'{e}quations fonctionnelles de fonctions L}, Invent. Math. \textbf{64} (1981), p. 89--118.
 
\bibitem{Harris}
M. Harris and R. Taylor, \textit{On the Geometry and Cohomology of Some Simple Shimura Varieties}, Ann. of Math. Stud., vol. 151, Princeton Univ. Press, Princeton, NJ, 2001.

\begin{comment}
\bibitem{Henniart}
G. Henniart, \textit{Caract\'{e}risation de la correspondance de Langlands locale par les facteurs $\epsilon$ de paires}, Invent. Math. \textbf{113} (1993), no. 2, p. 339--350.
\end{comment}

\begin{comment}
    \bibitem{Henniart1}
G. Henniart, \textit{Une preuve simple des conjectures de Langlands for $\GL(n)$ sur un corps $p$-adique}, Invent. Math. \textbf{139} (2000), p. 439--455.
\end{comment}


\begin{comment}
    \bibitem{Howe}
R. E. Howe, \textit{Tamely ramified supercuspidal representations of $\GL_{n}$}, Pacific J. Math. \textbf{73} (1977), no. 2, p. 437--460.
\end{comment}


\bibitem{Imai}
N. Imai and T. Tsushima, \textit{Local Jacquet–Langlands correspondences for simple supercuspidal representations}, Kyoto J. Math. \textbf{58} (2018), no. 3, p. 623--638.

\bibitem{Knightly}
A. Knightly and C. Li, \textit{Simple supercuspidal representations of $\GL(n)$}, Taiwanese J. Math. \textbf{19} (2015), no. 4, p. 995--1029.

\bibitem{Liu}
H. Jacquet and B. Liu, \textit{On the local converse theorem for $p$-adic $\GL_{n}$}. Amer. J. Math. \textbf{140} (2018), no. 5, p. 1399--1422.

\bibitem{JPSS}
H. Jacquet, I. Piatetskii-Shapiro, and J. Shalika, \textit{Rankin–Selberg convolutions}, Amer. J. Math. \textbf{105} (1983), p. 367--464.

\bibitem{Jiang}
D. Jiang, C. Nien, and S. Stevens, \textit{Towards the Jacquet conjecture on the local converse problem for p-adic $\GL_{n}$}, J. Eur. Math. Soc. \textbf{17} (2015), no. 4, p. 991--1007.

\begin{comment}
    \bibitem{Moy}
A. Moy, \textit{Local constants and the tame Langlands correspondence}, Amer. J. Math. \textbf{108} (1986), no. 4, p. 863--930.
\end{comment}


\bibitem{NZ}
C. Nien and L. Zhang, \textit{Converse theorem of Gauss sums} (with an appendix by Z. Yun), J. Number Theory \textbf{221} (2021), p. 365--388.

\bibitem{Shahidi}
F. Shahidi, \textit{Fourier transforms of intertwining operators and Plancherel measures for $\GL(n)$}, Amer. J. Math. \textbf{106} (1984), no. 1, p. 67--111.

\bibitem{Paskunas}
V. Pa\v{s}k\={u}nas and S. Stevens, \textit{On the realization of maximal simple types and epsilon factors of pairs}, Amer. J. Math. \textbf{130} (2008), no. 5, p. 1211--1261.

\begin{comment}
    \bibitem{Tate}
J. T. Tate, \textit{Fourier analysis in number fields, and Hecke’s zeta-functions}, In Algebraic Number Theory (Proc. Instructional Conf., Brighton, 1965), p. 305–347. Academic Press, London, 1967.
\end{comment}


\bibitem{Xu}
P. Xu, \textit{A remark on the simple cuspidal representations of $\GL_{n}$}, preprint, arXiv:1310.3519, 2013.

\bibitem{Ye}
R.  Ye, \textit{Explicit formulas for local factors of supercuspidal representations of $\GL_{n}$ and their applications}, Ph.D. thesis, The Ohio State University, 2019.

\bibitem{Ye2}
R.  Ye, \textit{Rankin–Selberg gamma factors of level zero representations of $\GL_{n}$}, Forum Math. \textbf{31} (2019), no. 2, p. 503--516.

\end{thebibliography}
\end{document}